\def\R{\mathbb{R}}
\def\N{\mathbb{N}}
\def\Z{\mathbb{Z}}
\def\Q{\mathbb{Q}}
\renewcommand{\geq}{\geqslant}
\renewcommand{\leq}{\leqslant}
\newtheorem{theorem}{Theorem}[section]
\newtheorem{proposition}[theorem]{Proposition}
\newtheorem{corollary}[theorem]{Corollary}
\newtheorem{lemma}[theorem]{Lemma}
\newtheorem{thmbis}{Theorem}
\newtheorem{thmter}{Theorem}
\newcounter{thmbiss}
\newcounter{lemmabiss}
\theoremstyle{definition}
\newtheorem{definition}[theorem]{Definition}
\newtheorem{remark}[theorem]{Remark}
\newcommand{\Norm}[2]{\| #1 \|_{#2}}
\renewcommand{\d}{\partial}
\DeclareMathOperator{\Char}{Char}
\newcommand{\rhs}{r.h.s.\@\xspace}
\newcommand{\ie}{i.e.\@\xspace}
\newcommand{\eg}{e.g.\@\xspace}
\newcommand{\resp}{resp.\@\xspace}
\newcommand{\suff}{sufficiently\xspace}
\newcommand{\nhd}{neighborhood\xspace}
\newcommand{\hf}{\frac12}
\newcommand{\ovl}[1]{\overline{#1}}
\newcommand{\udl}[1]{\underline{#1}}
\newcommand{\ic}{_{|t=0}}
\newcommand{\bld}[1]{\mbox{\boldmath $#1$}}
\newcommand{\Con}{\ensuremath{\mathscr C}}
\newcommand{\Cinfc}{\ensuremath{\Con^\infty}_{c}}
\newcommand{\eps}{\varepsilon}
\newcommand{\comp}{\leftidx{^b}{}{}}
\title{Geometric control condition for the wave equation with a
  time-dependent observation domain\footnote{G.~Lebeau
  acknowledges the  support of the European Research Council, 
ERC-2012-ADG, project number 320845:  Semi Classical Analysis of Partial Differential
Equations. E.~Tr\'elat
  acknowledges the support by FA9550-14-1-0214 of the EOARD-AFOSR.}}
\author{
J\'er\^ome  Le Rousseau\footnote{Univ. Paris-Nord, CNRS UMR
  7339,  Laboratoire analyse, g\'eom\'etrie et applications (Laga), 
 Institut
  universitaire de France,
99 avenue Jean Baptiste cl\'ement,
93430 Villetaneuse, France
  (\texttt{jlr@math.univ-paris13.fr}).},
\and
Gilles Lebeau\footnote{Univ. de Nice Sophia-Antipolis,  CNRS UMR
  7351, Laboratoire J.-A. Dieudonn\'e, Parc Valrose, 06108 Nice Cedex
  02, France (\texttt{gilles.lebeau@unice.fr}).},
\and
Peppino Terpolilli\footnote{ \mbox{Total, Centre scientifique et technique Jean-Feger, av. Larribau, 64000 Pau, France} (\texttt{peppino.terpolilli@total.com}).},
\and
and Emmanuel Tr\'elat\footnote{Sorbonne Universit\'es, UPMC Univ Paris
  06, CNRS UMR 7598, Laboratoire Jacques-Louis Lions, Institut
  universitaire de France, F-75005, Paris, France
  (\texttt{emmanuel.trelat@upmc.fr}).}
}
\begin{document}

\maketitle

\begin{abstract}
  We characterize the observability property (and, by duality, the
  controllability and the stabilization) of the wave equation on a
  Riemannian manifold $\Omega,$ with or without boundary, where the
  observation (or control) domain is time-varying. We provide a
  condition ensuring observability, in terms of propagating
  bicharacteristics. This condition extends the well-known geometric
  control condition established for fixed observation domains.

  As one of the consequences, we prove that it is always possible to
  find a time-dependent observation domain of arbitrarily small
  measure for which the observability property holds.  From a
  practical point of view, this means that it is possible to
  reconstruct the solutions of the wave equation with only few sensors
  (in the Lebesgue measure sense), at the price of moving the sensors
  in the domain in an adequate way.

We provide several illustrating examples, in which the observation
domain is the rigid displacement in $\Omega$ of a fixed domain, with
speed $v,$ showing that the observability property depends both on $v$
and on the wave speed. Despite the apparent simplicity of some of our
examples, the observability property can depend on nontrivial arithmetic
considerations.

\medskip
\noindent\textbf{Keywords:} wave equation, geometric control condition, time-dependent observation domain.

\noindent\textbf{AMS classification:} 
35L05,  
93B07,  
93C20.  
\end{abstract}

\small
\setcounter{tocdepth}{2}
\tableofcontents

\normalsize
\section{Introduction and main result}

\subsection{Framework}
\label{sec: framework}
Studies of the stabilization and the controllability for the wave
equation go back to the 70's with the works of D. L. Russell (see, \eg,
\cite{Russell:71,Russell:71b}). The work of J.-L.~Lions was very
important in the formalization of many controllability questions (see,
\eg, \cite{lions}).
In the case of a  manifold
without boundary $\Omega$, 
the pioneering work of J.~Rauch and M.~Taylor~\cite{RT:74} related the question of
fast stabilization, that is, exhibiting an exponential decay of the energy, to a geometric condition connecting the damping region
$\omega \subset \Omega$
and the rays of geometrical optics, 
the now celebrated Geometric Control Condition (in short, GCC).
The damped wave equation takes the form
\begin{align*}
  \d_t^2 u - \triangle u + \chi_\omega \d_t u =0.
\end{align*}
 Using that the
energy of the solution to a hyperbolic equation is
largely carried along the rays, if one assumes that any
ray will have reached the region $\omega$ where the operator is
dissipative in a finite time, one can prove that the energy decays
exponentially in time, with an additional unique continuation argument that
allows one to handle the low frequency part of the energy. The work \cite{RT:74}
only treated the case of a manifold
$\Omega$ 
without boundary, leaving open the case of
manifolds with boundary until the work of C.~Bardos,
G.~Lebeau, and J.~Rauch \cite{BardosLebeauRauch}.  The
understanding of the propagation of singularities in the presence of
the  boundary $\d\Omega$, after the seminal work of R.~Melrose and
J.~Sj\"ostrand~\cite{MelroseSjostrand1,MelroseSjostrand2}, was a key element in the proof of \cite{BardosLebeauRauch}, providing
a generalized notion of rays, taking reflections at the boundary into
account as well as glancing and gliding phenomena. 
The geometric condition for $\omega$, now an open subset of
$\ovl{\Omega}$,  is then the requirement that every generalized ray
should meet the damping region $\omega$ in a finite time. The resulting stabilization estimate then takes the form:
\begin{align*}
  E_0(u(t))   \leq C e^{-C' t}   E_0(u(0)), 
\end{align*}
where $E_0$ is the following energy
\begin{align*}
  E_0(u(t))  = \Norm{u(t)}{H^1(\Omega)}^2 + \Norm{\d_t u(t)}{L^2(\Omega)}^2.
\end{align*}
Note that, if an open set $\omega$ does not fulfill
the geometric control condition, then only a logarithmic type of energy
decay can be achieved in general \cite{Lebeau:96,LR:97,Burq:98}.\footnote{In fact, intermediate decay rates have been established in particular geometrical settings, see for instance \cite{Schenck} for almost exponential decay, \cite{BurqHitrik,Phung,AL} for polynomial decay.}

The question of exact controllability relies on the same line of
arguments as for the exponential stabilization.
By exact controllability in time $T>0$, for the control wave equation
\begin{align*}
  \d_t^2 u - \triangle u = \chi_\omega(x) f,
\end{align*}
one means, given an arbitrary
initial state $(u_0, u_1)$ and an arbitrary final state
$(u^F_0, u^F_1)$, the ability to find $f$ such that 
$ (u_{t=T}, \d_t u_{t=T}) = (u^F_0, u^F_1)$ starting from
$ (u_{t=0}, \d_t u_{t=0}) = (u_0, u_1)$. If the energy level is
$(u(t), \d_t u(t)) \in H^1(\Omega) \oplus L^2(\Omega)$, it is natural
to seek $f \in L^2((0,T)\times \Omega)$. Boundary conditions can be of
Dirichlet or Neumann types. 

\medskip
In fact, as is well known, both exponential stabilization
and exact controllability of the wave equation in a domain
$\Omega$, with a damping or a control only acting in an open  region
$\omega$ of $\ovl{\Omega}$, are equivalent to
an observability estimate for a free wave. For such a wave, the energy
is constant with respect to time. The observability inequality takes
the following form:
for some constant $C>0$ and some $T>0$, we have
\begin{align}
  \label{eq: intro observability}
  E_0(u) 
  \leq C \int_0^T \Norm{\d_t u}{L^2(\omega)}^2 dt.
\end{align}
For the issue of exact controllability, the time $T>0$ in this
inequality is then the control time (horizon). 
If the open set $\omega$ fulfills the Geometric Control Condition, then the results of
\cite{RT:74,BardosLebeauRauch} show that the infimum of all possible
such times $T$ coincides with the infimum of all possible times in the
Geometric Control Condition. 
Note however that there are cases in which the Geometric Control Condition does not hold,
and yet the observability inequality \eqref{eq: intro observability} is valid: the case $\Omega$ is a sphere and $\omega$ is a half-sphere is a typical example.

\medskip A glance at  inequality~\eqref{eq: intro observability},
shows that observability is in fact to be understood as occuring in a
space-time domain, here $(0,T) \times \omega$.  It is then natural to
wonder if observability can hold if it is replaced by some other open
subset of $(0,T)\times \ovl{\Omega}$.  This is the subject of the
present article.  

\medskip The motivation for such a study can be seen as fairly
theoretical. However, in practical issues, in different industrial
contexts, for nondestructive testing, safety applications, as well as
tomography techniques used for imaging bodies (human or not), this
question becomes quite relevant.  In fact, the industrial framework of
seismic exploration was the original motivation for this work. In the
different fields we mentionned, data are collected to be exploited in an
interpretation step which involves the solution of some inverse
problem. The point is that the device used to collect data does
not fit well with the usual geometric condition which is
crucial to obtain an observability result.  In some cases it appears of great
interest to be able to tackle situations where the observation set is
time-dependent. In others, the reduction of data volume may be sought,
while preserving the data quality.
One may also face a situation in which all sensors cannot be 
active at the same time. 

The example of seismic data acquisition can help the
reader get a grasp on the industrial need to better design data
acquisition procedures.  In the case of a towed marine seismic data
acquisition campaign, a typical setup consists in six parallel
streamers with length $6000$~m, separated by a distance of $100$~m,
floating at a depth of $8$~m. The basic receiving elements are pressure
sensitive hydrophones composed of piezoelectric ceramic crystal devices
that are placed some $20$ to $50$~m apart along each streamer. A source (a
carefully designed air gun array) is shot every $25$~m while the boat
moves. The seismic data experiment lasts around $8$~s.
One
understands with this description that a huge amount (terabytes) of
data is recorded during one such acquisition campaign above an area of
interest beneath the sea floor.  Of course, the velocity of the ship and of the streamers
is very small as compared to that of the seismic waves ($1500$~m/s in
water and up to $5000$~m/s for examples in salt bodies that are typical
in the North Sea or in the Gulf of Mexico). Yet, however small it may
be, one can question its impact on the quality of the data. 
One could also want
not to use all receivers at a single time but rather to design a
dynamic (software based) array of receivers during the time of the
seismic experiment. The reader will of course
realize that the mathematical results we present here are very far
from solving this problem. They however give some leads on what 
important theoretical issues can be.

\medskip

An inspection of the proof of \cite{BardosLebeauRauch} shows that it uses the
invariance of the observation cylinder $(0,T)\times \omega$ with respect to time in a crucial
way. Hence, the method, if not modified, cannot be applied to a general open subset of
$(0,T)\times \ovl{\Omega}$. One of the contributions of
the present work is to remedy this issue. In fact, this is done by a significant
simplification of the argument of \cite{BardosLebeauRauch}, yielding
a less technical aspect in one of the steps of the proof.
Eventually, the result that we obtain is in fact faithful to the intuition one may
have. The proper geometric condition to impose on an open subset 
$Q$ of $(0,T)\times \ovl{\Omega}$, for an observability condition of
the form
\begin{align}
  \label{eq: intro observability 2}
  E_0(u) 
  \leq C \iint_Q |\d_t u|^2\, dt\, d x
\end{align}
to hold is the following: for any generalized ray $t \mapsto x(t)$ initiated at time
$t=0$, there should be a time $0 <  t_1< T$ such that the ray is
located in $Q \cap \{ t=t_1\}$, that is $(t_1, x(t_1)) \in Q$.  
This naturally generalizes the usual Geometric Control Condition in
the case where $Q$ is the cylinder $(0,T)\times \omega$. 

One of the interesting consequences of our analysis lies in the
following fact:
if the geometric condition holds for a time-dependent domain
$Q$, a thinner domain for which the condition holds as well can be simply
obtained by picking a \nhd of the boundary of $Q$.  This can be viewed
as a step towards the reduction of the amount of data collected in the
practical applications mentioned above.

We complete our analysis with a set of examples in {\em very simple}
geometrical situations. Some of these examples show that even if ``many''
rays are missed by a static domain, a moving version of this domain
can capture in finite time all rays, even with a very slow
motion. However other examples show situations in which ``very few'' rays
are missed, and a slow motion of the observation set allows one to
capture these rays, yet implying that other rays remain away from the
moving observation region for any positive time. Those examples may
become hard to analyse because of the complexity of the Hamiltonian
dynamics that governs the rays. Yet, they illustrate that naive
strategies can fail to achieve the fulfillment of the Geometric
Control Condition. Those examples show that further study would be of
interest, with a study of the increase or decrease of the minimal
control time as an observation set is moved around.
Some examples show that this minimal control time may not be continuous
with respect to the dynamics we impose on a moving control region.

\subsection{Setting}\label{sec_1.2}
Let  $(M,g)$ be a smooth $d$-dimensional Riemannian manifold, with $d \geq 1$.
Let 
$\Omega$ be an open bounded connected subset of $M$, with a smooth boundary if $\partial\Omega\neq\emptyset$. 
We consider the wave equation
\begin{equation}\label{waveEq}
\partial_t^2u-\triangle_g u=0,
\end{equation}
in $\R\times\Omega$. 
Here, $\triangle_g$ denotes the Laplace-Beltrami operator on $M$, associated with the metric $g$ on $M$. 
If the boundary $\partial\Omega$ of $\Omega$ is nonempty, then we
consider boundary conditions of the form
\begin{equation}\label{condBC}
Bu=0\quad\textrm{on}\ \R\times\partial\Omega, 
\end{equation}
where the operator $B$ is either:
\begin{itemize}
\item the Dirichlet trace operator, $Bu=u_{\vert\partial\Omega}$;
\item or the Neumann trace operator, $Bu={\d_n u}_{|\partial\Omega}$, where $\d_n$ is the outward normal derivative along $\partial\Omega$.
\end{itemize}
Our study encompasses the case where $\partial\Omega=\emptyset$: in this case, $\Omega$ is a compact connected $d$-dimensional Riemannian manifold.
Measurable sets are considered with respect to the Riemannian measure $dx_g$ (if $M$ is the usual Euclidean space $\R^n$ then $dx_g$ is the usual Lebesgue measure).

In the case of a manifold without boundary or in the case of homogeneous Neumann
 boundary conditions, the Laplace-Beltrami operator is not invertible on $L^2(\Omega)$ but is invertible in 
$$
L^2_{0}(\Omega)=\Big\{u\in L^2 (\Omega) \ \vert \ \int_{\Omega}u(x)\, dx_g=0\Big\}.
$$
In what follows, we set $X=L^2_{0}(\Omega)$ in the boundaryless or in
the Neumann case, and $X=L^2 (\Omega)$ in the Dirichlet case (in both
cases, the norm on $X$ is the usual $L^2$-norm). We denote
by $A=-\triangle_g$ the Laplace operator defined on $X$ with domain
$D(A)=\{u\in X\ \vert\ Au\in X\ \textrm{and}\ Bu=0 \}$ with one of the
above boundary conditions whenever $\partial\Omega\neq\emptyset$.

Note that $A$ is a selfadjoint positive operator.  In the case of
Dirichlet boundary conditions, $X= L^2(\Omega)$ and we have
$D(A)=H^2(\Omega)\cap H^1_0(\Omega)$, $D(A^{1/2})=H^1_0(\Omega)$ and
$D(A^{1/2})'=H^{-1}(\Omega)$, where the dual is considered with
respect to the pivot space $X$. For Neumann boundary conditions,
$X = L^2_0(X)$ and we have
$D(A)=\{u\in H^2(\Omega) \cap  L^2_0(X) \ \vert \ \frac{\partial u}{\partial
  n}_{\vert\partial\Omega}=0\}$
and
$D(A^{1/2})= H^1(\Omega) \cap  L^2_0(X)$.
The Hilbert spaces $D(A)$, $D(A^{1/2})$, and $D(A^{1/2})'$ are
respectively endowed with the norms
$\Norm{u}{D(A)} = \Norm{Au}{L^2(\Omega)}$,
$\Norm{u}{D(A^{1/2})} = \Norm{A^{1/2}u}{L^2(\Omega)}$ and
$\Norm{u}{D(A^{1/2})'} = \Norm{A^{-1/2}u}{L^2(\Omega)}$.

\medskip

For all $(u^0,u^1)\in D(A^{1/2}) \times X$ (\resp
$X \times D(A^{1/2})'$), there exists a unique solution
$u \in \Con^0(\R;D(A^{1/2})) \cap \Con^1(\R;X)$ (\resp,
$u \in \Con^0(\R;X) \cap \Con^1(\R;D(A^{1/2})')$) of
\eqref{waveEq}-\eqref{condBC} such that $u\ic=u^0$ and
$\partial_t u\ic=u^1$.  In both cases, such solutions of
\eqref{waveEq}-\eqref{condBC} are to be understood in a weak sense.

\begin{remark}
  In \eqref{waveEq}, we consider the classical d'Alembert wave operator
  $\square_g = \partial_t^2-\triangle_g$. In fact, the results of the present article remain
  valid for the more general wave operators of the form 
  $$ P = \partial_t^2 - \mathop{\textstyle{\sum}}_{i,j}
  a_{ij}(x) \partial_{x_i} \partial_{x_j} + \textrm{lower-order terms}
  ,$$
  where  $(a_{ij}(x))$ is a smooth  real-valued symmetric positive definite matrix,
  and where the lower-order terms are smooth and do not depend on $t$.
  We insist on the fact that our
  approach is limited to operators with time-independent coefficients
  as in \cite{BardosLebeauRauch}.
\end{remark}

\subsection{Observability}
Let $Q$ be an open subset of $\R\times \ovl{\Omega}$. We denote by $\chi_{Q}$
the characteristic function of $Q$, defined by $\chi_{Q}(t,x)=1$ if
$(t,x)\in Q$ and $\chi_{Q}(t,x)=0$ otherwise. We set
$$
\omega(t) = \{ x\in\Omega\ \mid\  (t,x)\in Q\},
$$
so that
$
Q = \{ (t,x)\in\R\times\Omega\ \mid\  t\in\R,\ x\in\omega(t)\} .
$
Let $T>0$ be arbitrary. We say that \eqref{waveEq}-\eqref{condBC} is observable on $Q$ in time $T$ if there exists $C>0$ such that
\begin{equation}\label{observability}
C \Norm{ (u^0,u^1)}{ D(A^{1/2}) \times X}^2 
\leq
\Norm{\chi_{Q} \d_t u}{L^2((0,T)\times\Omega)}^2 
= \int_0^T\!\! \int_{\omega(t)} \vert \d_t u(t,x)\vert^2 \, dx_g\, dt,
\end{equation}
for all $(u^0,u^1)\in D(A^{1/2})\times X$, where $u$ is the
solution of \eqref{waveEq}-\eqref{condBC} with initial conditions
$u\ic = u^0$ and $\d_t u \ic = u^1$. One refers to \eqref{observability} as to an
\textit{observability inequality}.

\medskip
The observability inequality~\eqref{observability} is stated for
initial conditions $(u^0,u^1)\in D(A^{1/2})\times X$. Other energy
spaces can be used. An important example is the following proposition.
\begin{proposition}
  \label{prop: equivalence observability}
  The observability inequality~\eqref{observability} is equivalent to 
  having $C>0$ such that 
  \begin{equation}\label{observability-shift}
    C \Norm{ (u^0,u^1)}{ X \times D(A^{1/2})'}^2 
    \leq
    \Norm{\chi_{Q} u}{L^2((0,T)\times\Omega)}^2 
    = \int_0^T\!\! \int_{\omega(t)} \vert u(t,x)\vert^2 \, dx_g\, dt,
  \end{equation}
  for all $(u^0,u^1)\in X \times D(A^{1/2})'$, where $u$ is the
solution of \eqref{waveEq}-\eqref{condBC} with initial conditions
$u\ic = u^0$ and $\d_t u \ic = u^1$.
\end{proposition}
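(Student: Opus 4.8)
The plan is to exploit the fact that, because the coefficients of the wave operator are time-independent, the time derivative of a solution is again a solution, and this operation shifts the natural energy space down by exactly one order. Concretely, I would introduce the map
$$
\Phi : D(A^{1/2}) \times X \to X \times D(A^{1/2})', \qquad \Phi(u^0, u^1) = (u^1, -A u^0),
$$
and check that it is a bijective isometry between these two energy spaces. Indeed, $u^0 \in D(A^{1/2})$ gives $A^{1/2} u^0 \in X$, hence $A^{-1/2}(-A u^0) = -A^{1/2} u^0 \in X$, so that $-A u^0 \in D(A^{1/2})'$ with $\Norm{-A u^0}{D(A^{1/2})'} = \Norm{A^{1/2} u^0}{X} = \Norm{u^0}{D(A^{1/2})}$. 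Surjectivity follows by exhibiting the inverse $\Phi^{-1}(v^0, v^1) = (-A^{-1} v^1, v^0)$, which is well defined since $v^1 \in D(A^{1/2})'$ forces $A^{-1} v^1 \in D(A^{1/2})$. A direct computation then shows $\Norm{(u^0,u^1)}{D(A^{1/2}) \times X} = \Norm{\Phi(u^0,u^1)}{X \times D(A^{1/2})'}$.

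Next I would observe that if $u$ is the solution of \eqref{waveEq}-\eqref{condBC} with data $(u^0, u^1) \in D(A^{1/2}) \times X$, then $v := \partial_t u$ solves the same equation with the same boundary condition (differentiating in $t$ preserves both \eqref{waveEq} and \eqref{condBC}, since nothing depends on $t$), and its initial data are $v\ic = u^1$ and $\partial_t v\ic = \partial_t^2 u\ic = \triangle_g u^0 = -A u^0$. In other words, $v$ is precisely the solution associated with the data $\Phi(u^0, u^1) \in X \times D(A^{1/2})'$, and it lives in the regularity class $\Con^0(\R;X) \cap \Con^1(\R; D(A^{1/2})')$ announced for that space. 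These are standard facts of the semigroup framework, valid for the weak solutions considered here.

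With these two observations the equivalence is immediate. The left-hand side of \eqref{observability} equals $\Norm{(u^0,u^1)}{D(A^{1/2}) \times X}^2 = \Norm{\Phi(u^0,u^1)}{X \times D(A^{1/2})'}^2$, while its right-hand side equals $\Norm{\chi_Q \partial_t u}{L^2((0,T)\times\Omega)}^2 = \Norm{\chi_Q v}{L^2((0,T)\times\Omega)}^2$. Hence \eqref{observability} for $u$ is literally \eqref{observability-shift} for $v$, with the same constant $C$. Since $\Phi$ is a bijection, letting $(u^0,u^1)$ range over $D(A^{1/2}) \times X$ makes $\Phi(u^0,u^1)$ range over all of $X \times D(A^{1/2})'$; so one inequality holds for every admissible datum precisely when the other does.

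I do not expect a genuine obstacle here; the only point requiring a little care is the functional-analytic bookkeeping that $\Phi$ is an isometric isomorphism and that $\partial_t$ indeed maps $\Con^0(\R;D(A^{1/2})) \cap \Con^1(\R;X)$ into $\Con^0(\R;X) \cap \Con^1(\R;D(A^{1/2})')$ at the level of weak solutions. To sidestep any regularity discussion one could instead argue by density: first establish the identity $\partial_t u = v$ for data that are finite spectral combinations $\sum_k \big(a_k \cos(\lambda_k t) + b_k \lambda_k^{-1}\sin(\lambda_k t)\big)\phi_k$ in the eigenbasis $(\phi_k)$ of $A$ (with $A\phi_k = \lambda_k^2\phi_k$), and then pass to the limit using the isometry $\Phi$ together with the continuity in the data of both sides of the two inequalities.
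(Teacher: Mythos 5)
Your proof is correct and takes essentially the same route as the paper: the paper's two directions are exactly your map $\Phi(u^0,u^1)=(u^1,-Au^0)$ (differentiating the solution in time) and your inverse $\Phi^{-1}(v^0,v^1)=(-A^{-1}v^1,v^0)$, which the paper realizes concretely by setting $u=\int_0^t v(s)\,ds - A^{-1}v^1$, together with the same isometry identities between $D(A^{1/2})\times X$ and $X\times D(A^{1/2})'$. Packaging this as a single isometric isomorphism intertwining the solution flow with $\partial_t$ is only a cosmetic repackaging of the paper's two explicit verifications.
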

This proposition is proven in Section~\ref{sec_rem_obs_equiv}. 

\medskip

In the existing literature, the observation is most often made on
cylindrical domains $Q=(0,T) \times \omega$ for some given $T>0$,
meaning that $\omega(t) = \omega$. In such a case where the observation
domain $\omega$ is stationary, it is known that, within the class of
smooth domains $\Omega$, the observability property holds if the pair
$(\omega,T)$ satisfies the \textit{Geometric Control Condition} (in
short, GCC) in $\Omega$ (see \cite{BardosLebeauRauch,BurqGerard}).
Roughly speaking, it says that every geodesic propagating in $\Omega$
at unit speed, and reflecting at the boundary according to the
classical laws of geometrical optics, so-called generalized geodesics,
should meet the open set $\omega$ within time $T$.

 In the present article, our goal is to extend the GCC to \textit{time-dependent observation domains}.
For a precise statement of the GCC, we first recall the definition of
generalized geodesics and bicharacteristics.

\subsubsection{The generalized bicharacteristic flow of R.~Melrose and
  J.~Sj\"ostrand}
\label{sec: generalized bichar}

First, we define generalized bicharacteristics in the interior of
$\Omega$. There, they coincide with the classical notion of
bicharacteristics. Second, we define generalized
bicharacteristics in the \nhd of the boundary.

\paragraph{Symbols and bicharacteristics in the interior.}

The principal symbol of $-\triangle_g$ coincides with the cometric $g^*$ defined by 
$$
g^*_x(\xi,\xi)=\max_{v\in T_xM\setminus\{0\}} \frac{\langle\xi,v\rangle^2}{g_x(v,v)} ,
$$ 
for every $x\in M$ and every $\xi\in T_x^*M$. In local coordinates, we
denote by $g_{ij}(x)$ the Riemannian metric $g$ at point $x$, that is,
$
g(v,\tilde{v})(x) = g_{ij}(x) v^i(x) \tilde{v}^j(x),
$
for $v,\tilde{v} \in T M$, that is,  two vector fields, 
 and by
$g^{ij}(x)$ the cometric $g^*$ at $x$, that is 
$
g^*(\omega,\tilde{\omega})(x) = g^{ij}(x) \omega_i(x) \tilde{\omega}_j(x),
$
for $\omega, \tilde{\omega} \in T^* M$, that is, two 1-forms.
In local coordinates, the Laplace-Beltrami reads
\begin{equation*}
  -\triangle_g =  -{\mathsf g}(x)^{-1/2}\d_i ( {\mathsf g}(x)^{1/2} g^{ij}(x) \d_j ).
\end{equation*}

In $\R \times M$, the principal symbol of the wave operator
$\d_t^2 -\triangle_g$ is then $p (t,x, \tau,\xi) = -\tau^2 +
g^*_x(\xi,\xi)$.
In $T^\ast(\R\times M)$, the Hamiltonian vector field $H_p$ associated with $p$ is given by  $H_p f = \{
p, f\}$, for $f \in \Con^1(T^\ast(\R\times M))$. In local
coordinates, $H_p$ reads
\begin{align*}
  H_p = \d_\tau p\, \d_t + \nabla_{\xi}p\,  \nabla_{x}  - \nabla_{x}p\,
  \nabla_{x}
  =- 2 \tau \d_t  + 2 g^{jk}(x)\xi_k \d_{x_j} 
  -\d_{x_j}g^{ik}(x) \xi_i\xi_k \d_{\xi_j},
\end{align*}
with the usual Einstein summation convention.
Along the  integral curves of $H_p$, the value of $p$ is constant as $H_p p
=0$.  Thus, the characteristic set $\Char(p) = \{ p =0\}$ is invariant under the flow of $H_p$. 
In $T^\ast(\R\times M)$, bicharacteristics are
defined as the maximal  integral curves of $H_p$ that lay in $\Char(p)$. 
The projections of the bicharacteristics onto $M$, using the variable
$t$ as a parameter, coincide with the geodesics on $M$ associated with the
metric $g$ travelled at speed one.

\bigskip
We set  $Y = \R\times \ovl{\Omega}$. 
 We denote by $\Char_Y(p)$ the characteristic set of $p$
above $Y$, given by
\begin{align*}
  \Char_Y(p) = \{ \rho = (t,x, \tau,\xi) \in T^\ast(\R \times M)
  \setminus {0}\ \mid\  x \in \ovl{\Omega} \
  \text{and} \  p( \rho) =0\}. 
\end{align*}

\paragraph{Coordinates and Hamiltonian vector fields near and at the boundary.}
Close to the boundary $\R \times \d \Omega$, using normal geodesic
coordinates $(x',x_d)$, the principal symbol of the Laplace-Beltrami
operator reads $\xi_d^2 + \ell(x,\xi')$. Set $y = (t,x)$, $y'=
(t,x')$ and $y_n= x_d$.  Here $n =
d+1$. In these coordinates,  the principal symbol of the wave operator
takes the form $p(y',y_n,\eta',\eta_n) = \eta_n^2 + r (y, \eta')$,
where $r$ is a smooth $y_n$-family of tangential (differential)
symbols, and the boundary $\R \times \d \Omega$ is locally
parametrized by $y'$ and given by $\{ y_n=0\}$. The open set
$\R \times \Omega$ is locally given by $\{ y_n >0\}$. 

The variables $\eta= (\eta',\eta_n)$ are the cotangent variables
associated with $y=(y',y_n)$.  We set
\begin{equation*}
  \d T^\ast Y  = \{ \rho = (y,\eta) \in T^\ast(\R \times M)\ \mid\ y_n =0\},
\end{equation*} as the
boundary of $T^\ast Y  = \{ \rho = (y,\eta) \in T^\ast(\R \times M)\ \mid\
y \in Y \}$.  In those local coordinates, the associated
Hamiltonian vector field $H_p$ is given by
\begin{equation*} 
  H_p = \nabla_{\eta'} r \nabla_{y'} 
  + 2 \eta_n \d_{y_n} - \nabla_{y} r \nabla_{\eta}.
\end{equation*}
We denote by $r_0$ the trace of $r$ on $\d T^\ast Y$, that is, 
$r_0 (y', \eta') = r (y', y_n=0, \eta')$.  We then introduce the
Hamiltonian vector field above the submanifold $\{ y_n =0\}$
\begin{equation*}  
  H_{r_0} = \nabla_{\eta'} r_0 \nabla_{y'} 
  - \nabla_{y'} r_0 \nabla_{\eta'}.
\end{equation*}

\paragraph{The compressed cotangent bundle.}
On $Y$, for  points $y=(y',y_n)$ near the boundary,  we define the vector
fiber bundle  $\comp T Y = \cup_{y \in Y} \comp T_y Y$, generated by the vector fields 
$\d_{y'}$ and $y_n \d_{y_n}$, in the local coordinates introduced
above.  We then have the natural map 
\begin{align*}   
  j: T^\ast Y &\longrightarrow  \comp T^\ast  Y = \displaystyle\bigcup_{y \in Y}  (\comp T_y
  Y)^\ast, \\
  (y; \eta)  &\longmapsto  (y; \eta', y_n\eta_n),
\end{align*}
expressed here in local coordinates for simplicity.
In particular: 
\begin{itemize} 
\item If $y \in \R \times \Omega$ then $\comp T^\ast_y Y =  j(T^\ast_y Y)$ is isomorphic
  to $T^\ast_y Y = T^\ast_y (\R \times M)$;
\item if $y \in \R \times \d \Omega$ then $\comp T^\ast_y Y = j(T^\ast_y Y)$ is
  isomorphic to $T^\ast_y (\R \times \d \Omega)$.
\end{itemize} 
The bundle $\comp T^\ast Y$ is called the \emph{compressed cotangent bundle},
and we see that it allows one to patch together
$T^\ast_y (\R \times M)$ in the interior of $\Omega$ and
$T^\ast_y (\R \times \d \Omega)$ at the boundary in a smooth manner,
despite the discrepancy in their dimensions.

\paragraph{Decomposition of the characteristic set at the boundary.}
\label{sec: Gk}
We set $\Sigma  = j(\Char_Y(p)) \subset \comp T^\ast  Y$ and $\Sigma_0 = \Sigma_{|y_n
  =0} \subset \d\, \comp T^\ast  Y = \comp T^\ast  Y_{|y_n =0} \simeq T^\ast (\R \times \d \Omega)$.  Using local coordinates (for convenience here), we then
define $G \subset \Sigma_0$ by $r (y, \eta') = r_0 (y', \eta')=0$ as the
glancing set and $H = \Sigma_0 \setminus G$ as the hyperbolic set.
Hence, if $\rho =(y',y_n=0, \eta') \in \Sigma_0$ then 
\begin{align*}
  \rho \in H  \ \Leftrightarrow \ r_0 (y', \eta') < 0, \qquad 
  \rho \in G  \ \Leftrightarrow \ r_0 (y', \eta') = 0.
\end{align*}
The set of points $(y',y_n=0, \eta') \in \comp T^\ast  Y_{|y_n =0}$
such that $r_0 (y', \eta') > 0$ is referred to as the elliptic set $E$. 
We also set $\hat\Sigma = \Sigma \cup E = \Sigma \cup \comp T^\ast
Y_{|y_n=0}$ and the following cosphere quotient space $S^\ast \hat\Sigma = \hat\Sigma / (0,+\infty)$.

The glancing set is itself written as $G = G^2 \supset G^3\supset
\cdots \supset G^\infty$, with $\rho =(y',y_n=0, \eta)$ in $G^{k+2}$
if and only if 
\begin{align*}
  \eta_n = r_0 (\rho) =0, \quad H_{r_0}^j r_1 (\rho) =0, \ \ 0 \leq j < k,
\end{align*}
where $r_1 (\rho) = \d_{y_n} r (y', y_n=0, \eta')$.
Finally, we write $G^2 \setminus G^3$, the set of glancing points of
order exactly $2$, as the union of the diffractive set $G_d^2$ and of 
the gliding set $G_g^2$, that is $G^2 \setminus G^3= G_d^2 \cup G_g^2$, with 
\begin{align*}
  \rho \in G_d^2\  (\text{\resp},\ G_g^2)\ \Leftrightarrow\  \rho \in G^2
  \setminus G^3\ \text{and}\  r_1(\rho) < 0 \  (\text{\resp},\ > 0).
\end{align*}
Similarly, for $\ell \geq 2$, we write $G^{2\ell} \setminus G^{2\ell+1}$, the set of glancing points of
order exactly $k=2\ell$, as the union of the diffractive set $G_d^{2 \ell}$ and 
the gliding set $G_g^{2\ell}$, that is $G^{2\ell} \setminus
G^{2\ell+1}= G_d^{2\ell} \cup G_g^{2\ell}$, with 
\begin{align*}
  \rho \in G_d^{2\ell}\  (\text{\resp}\ G_g^{2\ell})\ \Leftrightarrow\
  \rho \in G^{2\ell}
  \setminus G^{2\ell+1}\ \text{and}\  H_{r_0}^{2\ell -2} r_1(\rho) < 0 \  (\text{\resp}\ > 0).
\end{align*}
We shall call diffractive\footnote{ In the sense of Taylor and Melrose the terminology diffractive only applies to
  $G_d^2$. Here, we chose to extend it to $\cup_{\ell \geq 1} G_d^{2
  \ell}$ as we shall refer to nondiffrative points, that is, the complement of
$G_d$, in
Section~\ref{sec: boundary control}. In the literature nondiffractive
points are defined this way but diffractive points are often defined
according to Taylor and Melrose. Then, the set of  nondiffractive
points and the set of diffractive points are not complements of one
another; a source of confusion.} a point in $G_d = \cup_{\ell \geq 1} G_d^{2 \ell}$.

Observe that a bicharacteristic going through a point of $G^k$ projects onto a
geodesic on $M$ that  has a contact of order $k$ with $\R \times \d \Omega$.

\paragraph{Generalized bicharacteristics.}
What we introduced above now allows us to give a precise definition of
generalized bicharacteristics above $Y$.

\begin{definition} 
A generalized bicharacteristic of $p$ is a differentiable
map
\begin{align*}
  \R \setminus B \ni s \mapsto \gamma(s) \in (\Char_Y(p) \setminus 
  \d T^\ast Y ) \cup
  j^{-1} (G), 
\end{align*}
where $B$ is a subset of $\R$ made of
isolated points, that  satisfies the following properties
\begin{enumerate}[label=(\roman*)]
  \item $\gamma'(s) = H_p(\gamma(s))$ if either $\gamma(s) \in \Char_Y(p)
    \setminus \d T^\ast Y$ or $\gamma(s) \in  j^{-1} (G_d)$; 
  \item $\gamma'(s) = H_{r_0}(\gamma(s))$ if   $\gamma(s) \in  j^{-1} (G \setminus G_d)$;
  \item If $s_0 \in B$, there exists $\delta>0$ such that $\gamma(s) \in
    \Char_Y(p) \setminus \d T^\ast Y$ for $s \in (s_0-\delta, s_0) \cup ( s_0,
    s_0+\delta)$. Moreover, the limits $\rho^\pm = (y^\pm, \eta^\pm) = \lim_{s \to s_0^\pm}\gamma(s)$
    exist  and  $y^-_n = y_n^+=0$, \ie, $\rho^\pm \in \Char_Y(p) \cap \d T^\ast Y$,
    $y^{-\prime} = y^{+\prime}$, $\eta^{-\prime} = \eta^{+\prime}$,
    and $\eta^{-}_n = - \eta^{+}_n$. That is, $\rho^+$ and $\rho^-$
    lay in the same hyperbolic fiber above a point in $\comp T^\ast
    Y_{|y_n=0}$: $j(\rho^+) = j(\rho^-) \in H$.
\end{enumerate}
\end{definition}
Point (i) describes the generalized bicharacteristic in the interior,
that is, in $T^\ast (\R \times \Omega)$, and at diffractive points,
where it coincides with part of a classical bicharacteristic as
defined above. Point (ii) describes the behavior in $G \setminus G_d$,
thus explaining that a generalized bicharacteristic can enter or leave
the boundary $\d T^\ast Y$ or locally remain in it. Point (iii)
describes reflections when the boundary $\d T^\ast Y$ is reached
transversally by a classical bicharacteristic, that is, at a point of
the hyperbolic set.  While $s\mapsto \xi(s)$ exhibits a jump at such a
point, $s\mapsto t(s)$ and $s\mapsto x(s)$ can both be extended by
continuity there. We shall thus proceed with this extension.
Above, for clarity we chose to state Point (iii) in local coordinates near the hyperbolic point.
The generalized bicharacteristics are however defined as a geometrical object, independent of the choice of coordinates.

\begin{definition}
Compressed generalized bicharacteristics are the image under the
map $j$ of the generalized bicharacteristics defined above.
\end{definition}

If $\comp \gamma = j(\gamma)$ is such a compressed generalized
bicharacteristics, then $\comp \gamma: \R \to \comp T^\ast
Y \setminus E$ is a continuous map 
(if one introduces
the proper natural topology on $\comp T^\ast Y$).

Using $t$ as parameter, generalized geodesics for $\Omega$, travelled
at speed one, are then the projection on $M$ of the
(compressed) generalized bicharacteristics. Generalized geodesics
remain in $\ovl{\Omega}$.  We shall call a ``ray'' this projection
following the terminology of geometrical optics.

\medskip
An important result is then the following.
\begin{proposition}
  \label{prop: uniqueness bicharacteristic flow}
  A (compressed) generalized bicharacteristic with no point in $G^\infty$ is
  uniquely determined by any one of its points. 
\end{proposition}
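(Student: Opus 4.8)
The plan is to reduce the statement to a \emph{local} uniqueness property and then to propagate it by a connectedness argument. Suppose $\gamma_1$ and $\gamma_2$ are two generalized bicharacteristics with $\gamma_1(s_0)=\gamma_2(s_0)$, none of whose points lies in $G^\infty$. The set of parameters at which $\gamma_1(s)=\gamma_2(s)$ is closed, by continuity of the associated compressed curves $\comp\gamma_i = j(\gamma_i)$. If I can show that the \emph{germ} of a generalized bicharacteristic at any one of its points is determined by that point, then this set is also open, and since the parameter interval $\R\setminus B$ is connected, the two curves coincide. So everything comes down to establishing local uniqueness at each of three types of points.

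First, if $\gamma(s_0)$ lies in the interior $\Char_Y(p)\setminus\d T^\ast Y$ or at a diffractive point of $j^{-1}(G_d)$, then by definition $\gamma'=H_p(\gamma)$ with $H_p$ a smooth vector field, so the Cauchy--Lipschitz theorem yields a unique integral curve on a small interval. At a diffractive point one has $\eta_n=0$, hence $\dot y_n = 2\eta_n =0$ while $\ddot y_n = -2 r_1 >0$ (since $r_1<0$ on $G_d$), so $y_n$ has a strict local minimum equal to $0$: the curve touches $\{y_n=0\}$ only at $s_0$ and stays in $\ovl{\Omega}$ on both sides, hence agrees with the interior $H_p$-curve and is unique. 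Second, if $j(\gamma(s_0))$ lies in the hyperbolic set $H$, the boundary is met transversally; on each side the curve is a genuine interior bicharacteristic, and the reflection law of point (iii), namely $\eta_n^- = -\eta_n^+$ with $(y',\eta')$ preserved, determines the outgoing data from the incoming data. Uniqueness again follows from the smoothness of $H_p$ in the interior together with this deterministic reflection.

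The only delicate case is $\gamma(s_0)\in j^{-1}(G\setminus G_d)$, \ie a gliding or higher-order glancing point. There the curve must follow the tangential field $H_{r_0}$, which is smooth on $\Sigma_0$, so once one knows that the curve does remain in the boundary, Cauchy--Lipschitz again gives local uniqueness. The genuine obstacle is to rule out branching: I must show that near such a point the curve cannot choose between gliding along $\{y_n=0\}$ and leaving into the interior, that is, that its boundary behaviour is forced. This is exactly where the hypothesis \emph{no point in $G^\infty$} enters. It guarantees that the order of contact with the boundary is some finite $2\ell$, so that only finitely many of the successive derivatives $r_1, H_{r_0} r_1,\dots, H_{r_0}^{2\ell-2} r_1$ vanish, and the sign of the last one separates the gliding regime (where $y_n$ would become negative under $H_p$, forcing the curve onto the boundary) from the already-treated diffractive regime. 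Quantitatively, one controls the scalar function $s\mapsto y_n(\gamma(s))$ and its derivatives through this finite tower, pinning the curve down on both sides of $s_0$.

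The main technical difficulty, which I expect to consume most of the work, is handling the possible accumulation of hyperbolic reflection points of $B$ onto a glancing point: near a glancing parameter the transversal-reflection times may cluster, and one must show that the resulting curve is continuous there and coincides with the $H_{r_0}$-flow. This is precisely the technical core of the Melrose--Sj\"ostrand theory. I would follow their argument by introducing an auxiliary nonnegative quantity adapted to the contact order (built from $r_0$, $r_1$ and their $H_{r_0}$-iterates) whose behaviour along the flow is controlled, using it both to show that the reflection set is locally finite away from $G^\infty$ and to certify that the two candidate curves share the same germ. With this in hand, local uniqueness holds at every point and the connectedness argument completes the proof.
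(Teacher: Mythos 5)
First, a point of calibration: the paper does not prove Proposition~\ref{prop: uniqueness bicharacteristic flow} at all --- it quotes the result from Melrose and Sj\"ostrand \cite{MelroseSjostrand1} (see also \cite[Section~24.3]{Hoermander:V3}), so your proposal must be judged as an attempt to reconstruct that theory rather than against an argument in the text. Your overall architecture is the right one, and the easy cases are handled correctly: the open--closed reduction (properly phrased for the compressed curves $\comp\gamma_i$ on all of $\R$, since the uncompressed curves live on $\R\setminus B_i$ with possibly different exceptional sets $B_1\neq B_2$), interior points by Cauchy--Lipschitz for $H_p$, hyperbolic points by the deterministic reflection law of point (iii), and the diffractive computation $\dot y_n=2\eta_n=0$, $\ddot y_n=-2r_1>0$, which matches the paper's sign convention $r_1<0$ on $G_d^2$ (for $G_d^{2\ell}$ with $\ell\geq 2$ you would need the analogous statement via $H_{r_0}^{2\ell-2}r_1<0$, which you do not spell out).

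The genuine gap is the one you name but do not close. Since $B$ is only required to consist of isolated points, reflection parameters may accumulate at a glancing parameter $s_0$; near a point of $G\setminus G_d$ one must therefore show that a curve undergoing infinitely many transversal reflections clustering at $s_0$ and the $H_{r_0}$-gliding curve cannot both pass through the same point with different germs, and that any such limit curve is differentiable at $s_0$ with derivative $H_{r_0}$. Your sentence ``I would follow their argument by introducing an auxiliary nonnegative quantity adapted to the contact order'' is a plan, not a proof: constructing that functional, establishing the differential inequalities along both the $H_p$-arcs and across reflections, and running the induction on the (finite, thanks to $G^\infty=\emptyset$) contact order is precisely the technical core of \cite{MelroseSjostrand1}, and none of it is carried out. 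Likewise, the heuristic that ``the sign of the last derivative separates gliding from diffractive, forcing the boundary behaviour'' is only the infinitesimal picture; it does not by itself exclude branching through accumulating reflections, which is the whole difficulty. In sum, your proposal is an accurate and well-organized outline of the cited Melrose--Sj\"ostrand proof, with the crux correctly located, but as a standalone argument the proposition remains unproved.
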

We refer to \cite{MelroseSjostrand1} for a proof of
this result and for many more details on generalized
bicharacteristics (see also \cite[Section 24.3]{Hoermander:V3}).

\subsubsection{A time-dependent geometric control condition}
\label{sec: t gcc}

With the notion of  compressed generalized bicharacteristic recalled in
Section~\ref{sec: generalized bichar}, 
we can state the geometric condition adapted to a time-dependent control domain.

\begin{definition}\label{def_tGCC}
  Let $Q$ be an open subset of $\R \times \ovl{\Omega}$, and
  let $T>0$.  We say that $(Q,T)$ satisfies the \emph{time-dependent
    Geometric Control Condition} (in short, $t$-GCC) if every
  generalized bicharacteristic
  $\comp \gamma: \R \ni s \mapsto (t(s), x(s),
  \tau(s), \xi(s)) \in \comp T^\ast Y
  \setminus E$
  is such that there exists $s \in \R$ such that
  $t(s) \in (0,T)$ and $(t(s), x(s)) \in Q$.  We say that $Q$
  satisfies the $t$-GCC if there exists $T>0$ such that $(Q,T)$ satisfies
  the $t$-GCC.

  The control time $T_0(Q,\Omega)$ is defined by
  \begin{equation*}
    T_0(Q,\Omega) = \inf \{ T>0\ \mid\ (Q,T)\ \text{satisfies the $t$-GCC} \} ,
  \end{equation*}
  with the agreement that $T_0(Q,\Omega)=+\infty$ if $Q$ does not
  satisfy the $t$-GCC.
\end{definition}

The $t$-GCC property of Definition \ref{def_tGCC} is a time-dependent
version of the usual GCC.

\begin{remark}
\label{rem: t-GCC}
Several remarks are in order.
\begin{enumerate}
\item 
The $t$-GCC assumption implies that the set $\mathcal O =
\cup_{t\in(0,T)}\omega(t)$ is a control domain that satisfies the
usual GCC for a time  $T>T_0(Q,\Omega)$.
\item 
  It is interesting to note that the control time $T_0(Q,\Omega)$ is
  not a continuous function of the domains, for any reasonable
  topology (see Remark \ref{rem_T0_notcont} below).
\item Observe that if $(Q,T)$ satisfies the $t$-GCC, 
a similar geometric condition may not occur if the time interval
$(0,T)$ is replaced by $(t_0, t_0 +T)$.  As the set $Q$  is not
a cylinder in general, by nature the $t$-GCC is not invariant under time
translation. 
\item Note that $Q$ cannot be chosen as an open set of $\R\times
  \Omega$ instead of $\R\times \ovl{\Omega}$. Consider indeed the case
  of a disk, if $Q$ is an open set of $\R\times
  \Omega$ then the ray that glides along the boundary never enters
  $Q$. This coincides with the so-called whispering gallery phenomenon.
\end{enumerate}
\end{remark}

\subsection{Main result}

\begin{theorem}\label{mainthm}
Let $Q$ be an open subset of $\R \times \ovl{\Omega}$ that satisfies the $t$-GCC. Let $T>T_0(Q,\Omega)$.
If $\partial\Omega\neq\emptyset$, we assume moreover that no
generalized bicharacteristic has a contact of infinite order with
$(0,T)\times\d\Omega$, that is, $G^\infty  = \emptyset$.
Then, the observability inequality \eqref{observability} holds.
\end{theorem}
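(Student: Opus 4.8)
The plan is to argue by contradiction, following the microlocal strategy of \cite{BardosLebeauRauch,BurqGerard} but adapting it to the non-cylindrical set $Q$. Suppose \eqref{observability} fails. Then there is a sequence of solutions $(u_n)$ of \eqref{waveEq}-\eqref{condBC} with $\Norm{(u_n^0,u_n^1)}{D(A^{1/2})\times X}=1$ and $\Norm{\chi_Q\d_t u_n}{L^2((0,T)\times\Omega)}\to 0$. Using the compact embedding $D(A^{1/2})\times X\hookrightarrow X\times D(A^{1/2})'$, a subsequence of the data converges strongly in the weaker space to some $(v^0,v^1)$, whose solution $v$ satisfies $\d_t v=0$ on $Q$ by weak lower semicontinuity of the right-hand side. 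Set $w_n=u_n-v$; the data of $w_n$ tend to $0$ weakly in $D(A^{1/2})\times X$, so one may attach to the energy-level derivatives of $(w_n)$ a microlocal defect measure $\mu$ (in the sense of G\'erard and Tartar). Since $\d_t w_n\to 0$ in $L^2(Q)$ and $v$ contributes nothing there, the first goal is to prove $\mu=0$: this forces $w_n\to 0$ strongly in energy, hence $\Norm{(v^0,v^1)}{D(A^{1/2})\times X}=1$ and in particular $v\neq 0$.

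Because $\d_t^2 w_n-\triangle_g w_n=0$, the measure $\mu$ is supported in $\Char_Y(p)$ and, crucially, is invariant under the compressed generalized bicharacteristic flow of Melrose and Sj\"ostrand: in the interior this is classical propagation along $H_p$, while near and at $\R\times\d\Omega$ one must follow the reflections at the hyperbolic set and the gliding behaviour on $j^{-1}(G\setminus G_d)$. The assumption $G^\infty=\emptyset$ is what guarantees, through Proposition~\ref{prop: uniqueness bicharacteristic flow}, that the flow through each point is unique, so that no mass of $\mu$ is lost at infinite-order tangency points. On $\Char_Y(p)$ one has $\tau^2=g^\ast_x(\xi,\xi)$ with $(\tau,\xi)\neq 0$, hence $\tau\neq 0$; thus $\d_t$ is elliptic on the characteristic set and $\chi_Q\d_t w_n\to 0$ in $L^2$ makes $\mu$ vanish over $Q\cap\{0<t<T\}$. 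Moreover $\dot t=-2\tau\neq 0$, so time is strictly monotone along each generalized bicharacteristic; by the $t$-GCC every such curve meets $Q$ at a time in $(0,T)$, where $\mu=0$, and invariance then propagates this vanishing along the whole curve. As every point of $\Char_Y(p)$ lies on such a curve, $\mu=0$. I expect the propagation of $\mu$ \emph{up to and along the boundary}, with the bookkeeping at glancing points, to be the main technical obstacle, and also the step where the argument of \cite{BardosLebeauRauch} can be streamlined.

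It remains to rule out the low-frequency obstruction, namely that $v\neq 0$ while $\d_t v=0$ on $Q$. The space $\mathcal N$ of finite-energy solutions with $\d_t v=0$ on $Q$ is finite-dimensional, since the measure argument above yields \eqref{observability} up to a relatively compact remainder. The key point — and the reason the non-cylindrical case goes through — is that $\mathcal N$ is stable under $\d_t$: the coefficients being $t$-independent, $\d_t v$ again solves \eqref{waveEq}, and since $\d_t v$ vanishes on the \emph{open} set $Q$ its derivative $\d_t^2 v$ vanishes there too, so $\d_t v\in\mathcal N$. Note that this uses differentiation, not time translation, so the lack of invariance of $Q$ under $t\mapsto t+t_0$ (Remark~\ref{rem: t-GCC}) is irrelevant here. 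Diagonalising $\d_t$ on the finite-dimensional complexification of $\mathcal N$ produces a nonzero $v=e^{i\sigma t}\phi(x)$ with $-\triangle_g\phi=\sigma^2\phi$, $B\phi=0$ (so $\sigma^2\geq 0$ lies in the spectrum of $A$), and $i\sigma\phi=0$ on $\mathcal O=\cup_{t\in(0,T)}\omega(t)$. If $\sigma\neq 0$ then $\phi=0$ on the nonempty open set $\mathcal O$, and elliptic unique continuation for $-\triangle_g$ gives $\phi\equiv 0$; if $\sigma=0$ then $\triangle_g\phi=0$ with $B\phi=0$ forces $\phi\equiv 0$ as well (the only harmonic function in $X$ with the prescribed boundary behaviour is $0$). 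In both cases $v\equiv 0$, contradicting $v\neq 0$ and completing the proof. Alternatively, running the same contradiction at the level of \eqref{observability-shift} via Proposition~\ref{prop: equivalence observability} yields $v=0$ on $Q$ directly, which removes even the case distinction on $\sigma$.
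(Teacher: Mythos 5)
Your overall route is the one the paper takes: your defect-measure step (applied to $w_n=u_n-v$) is the paper's Lemma~\ref{lemma_weak_obs} in merged form, and your finite-dimensional eigenvalue/unique-continuation endgame is the paper's Lemma~\ref{lemNT}. However, there is a genuine gap at the point you yourself flag as ``the key point'': the claim that $\mathcal N$ is stable under $\d_t$. If $v$ has data $(v^0,v^1)\in D(A^{1/2})\times X$, then $\d_t v$ has data $(v^1,-Av^0)\in X\times D(A^{1/2})'$ only --- one derivative is lost --- so $\d_t v$ is a priori \emph{not} a finite-energy solution and does not belong to the space $\mathcal N$ as you defined it. Your observation that $\d_t v=0$ on the open set $Q$ forces $\d_t^2 v=0$ there handles the invisibility condition but not the regularity requirement, and the diagonalization argument needs $\d_t$ to be an endomorphism of the finite-dimensional space $\mathcal N$; as written, $\d_t$ maps $\mathcal N$ out of it, and iterating the construction at successively lower energy levels never closes the loop.

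The paper fills exactly this hole as the first step of Lemma~\ref{lemNT}: the $t$-GCC combined with propagation of singularities along the generalized bicharacteristic flow (see \cite[Theorem 24.5.3]{Hoermander:V3}; this is a second place where $G^\infty=\emptyset$ and Proposition~\ref{prop: uniqueness bicharacteristic flow} are used) shows that every invisible solution is smooth on $(0,T)\times\Omega$ up to the boundary. Indeed, since $\tau\neq 0$ on the characteristic set, $\d_t v=0$ on $Q$ makes $v$ microlocally regular over $T^\ast Q$, and every generalized bicharacteristic passes there; smoothness then gives $\d_t v$ finite-energy data (taken at an interior time and propagated back to $t=0$), so $\d_t v\in N_T$. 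Note that you cannot dodge this by a time-translation trick ($Q$ is not translation invariant, as you correctly note), nor by smoothing with functions of $A$, since these do not commute with multiplication by $\chi_Q(t,x)$. Two minor further points: your reduction to a purely imaginary eigenvalue $\lambda=i\sigma$ needs a word of justification (conservation of energy forces $\Re\lambda=0$; or simply run the argument with general $\lambda$, as the paper does), and your phrase ``\eqref{observability} up to a relatively compact remainder'' should be made precise as the weak inequality \eqref{weak_observability}, whose penalization term lives in a space into which the energy space embeds compactly --- that is what makes the unit ball of $\mathcal N$ compact, hence $\mathcal N$ finite dimensional.
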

\setcounter{thmbiss}{\arabic{theorem}}
Theorem \ref{mainthm} is proven in Section \ref{subsec_proofmainthm}.
By Proposition~\ref{prop: equivalence observability} we have the following result.
\begin{thmbis}\label{cor:mainthm}
Under the same assumptions as Theorem~\ref{mainthm} the observability inequality \eqref{observability-shift} holds.
\end{thmbis}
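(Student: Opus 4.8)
The plan is to obtain \eqref{observability-shift} as an immediate consequence of Theorem~\ref{mainthm} combined with the equivalence recorded in Proposition~\ref{prop: equivalence observability}. Under the hypotheses in force (the same open set $Q$, the same $T>T_0(Q,\Omega)$, and $G^\infty=\emptyset$ when $\partial\Omega\neq\emptyset$), Theorem~\ref{mainthm} furnishes the observability inequality \eqref{observability}. Proposition~\ref{prop: equivalence observability} asserts that \eqref{observability} and \eqref{observability-shift} are equivalent, so \eqref{observability-shift} follows at once. The only thing to check is that the statement we invoke is exactly the one just proved, which holds by assumption; no further geometric input is needed.

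For completeness, and in case one wishes to see the mechanism underlying Proposition~\ref{prop: equivalence observability} rather than merely cite it, the key point is that time differentiation intertwines the two energy levels. If $u$ solves \eqref{waveEq}-\eqref{condBC} with data $(u^0,u^1)\in D(A^{1/2})\times X$, then, since the coefficients of the operator do not depend on $t$, the function $v:=\d_t u$ is again a solution of \eqref{waveEq}-\eqref{condBC}, now with initial data $(v^0,v^1)=(u^1,-Au^0)$. The linear map $(u^0,u^1)\mapsto(u^1,-Au^0)$ is a bijection of $D(A^{1/2})\times X$ onto $X\times D(A^{1/2})'$, and it is in fact an isometry for the relevant norms, because
\begin{equation*}
  \Norm{Au^0}{D(A^{1/2})'}=\Norm{A^{-1/2}Au^0}{L^2(\Omega)}=\Norm{A^{1/2}u^0}{L^2(\Omega)}=\Norm{u^0}{D(A^{1/2})},
\end{equation*}
so that $\Norm{(v^0,v^1)}{X\times D(A^{1/2})'}=\Norm{(u^0,u^1)}{D(A^{1/2})\times X}$.

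Granting this, one rewrites \eqref{observability} for $u$ in terms of $v=\d_t u$. Replacing $\d_t u$ by $v$ on the right and the energy norm of $(u^0,u^1)$ by the equal norm of $(v^0,v^1)$ on the left yields
\begin{equation*}
  C\,\Norm{(v^0,v^1)}{X\times D(A^{1/2})'}^2 \leq \Norm{\chi_Q v}{L^2((0,T)\times\Omega)}^2,
\end{equation*}
which is precisely \eqref{observability-shift}; surjectivity of the intertwining bijection shows that every admissible datum $(v^0,v^1)\in X\times D(A^{1/2})'$ arises from some $(u^0,u^1)\in D(A^{1/2})\times X$, and the computation is reversible, giving the converse implication as well. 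I expect no genuine obstacle here: the one point requiring care is that the intertwining map is an exact isometry, so that the constant $C$ transfers without loss; this is guaranteed by the norm identity displayed above. Since Proposition~\ref{prop: equivalence observability} is already available, the corollary in fact follows in a single line.
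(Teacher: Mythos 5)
Your proposal is correct and follows the paper exactly: the paper proves this statement in one line, as an immediate consequence of Theorem~\ref{mainthm} combined with Proposition~\ref{prop: equivalence observability}. Your supplementary sketch of the proposition's mechanism is also sound and essentially reproduces the paper's own argument in Section~\ref{sec_rem_obs_equiv}, with the only cosmetic difference that you obtain the needed direction via the isometric bijection $(u^0,u^1)\mapsto(u^1,-Au^0)$ and its surjectivity, whereas the paper realizes the same inverse explicitly through the antiderivative $u=\int_0^t v(s)\,ds-A^{-1}v^1$.
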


\begin{remark}
  \label{rem-bord}
\begin{enumerate}
  \item 
  In the case where $\d\Omega\neq\emptyset$, the assumption of the absence
  of any ray having a contact of infinite order with the
  boundary is classical (see \cite{BardosLebeauRauch}).  Note that
  this assumption is not useful if $M$ and $\d\Omega$ are analytic.
  This assumption is used in a crucial way in the proof of the theorem
  to ensure uniqueness of the generalized bicharacteristic flow, as
  stated in Proposition~\ref{prop: uniqueness bicharacteristic flow}.
  \item \label{rem-bord2}
  We have assumed here that, if $\d\Omega\neq\emptyset$, then
  $\d\Omega$ is smooth. The case where $\d\Omega$ is not smooth is
  open. Even the case where $\d\Omega$ is piecewise analytic is
  open. The problem is that, in that case, the generalized
  bicharacteristic flow is not well defined since there is no
  uniqueness of a bicharacteristic passing over a point.  This fact is
  illustrated on Figure~\ref{figcornerb} where a ray reflecting at
  some angle can split into two rays. However, it clearly follows from
  our proof that Theorem \ref{mainthm} is still valid if the domain
  $\Omega$ is such that this uniqueness property holds (like in the
  case of a rectangle).  In general, we conjecture that the conclusion
  of Theorem \ref{mainthm} holds true if \textit{all} generalized
  bicharacteristics meet $T^* Q$ within time $T$.  This would require
  however to extend the classical theory of propagation of
  singularities. Proving this fact is beyond
  of the scope of the present article. We may however assert here, in
  the present context, that the result of Theorem \ref{mainthm} is
  valid in any $d$-dimensional orthotope.
\end{enumerate}
\end{remark}

\begin{figure}[h]
\begin{center}
\subfigure[Right angle corner. \label{figcornera}]
{ \resizebox{3.8cm}{!}{
\input 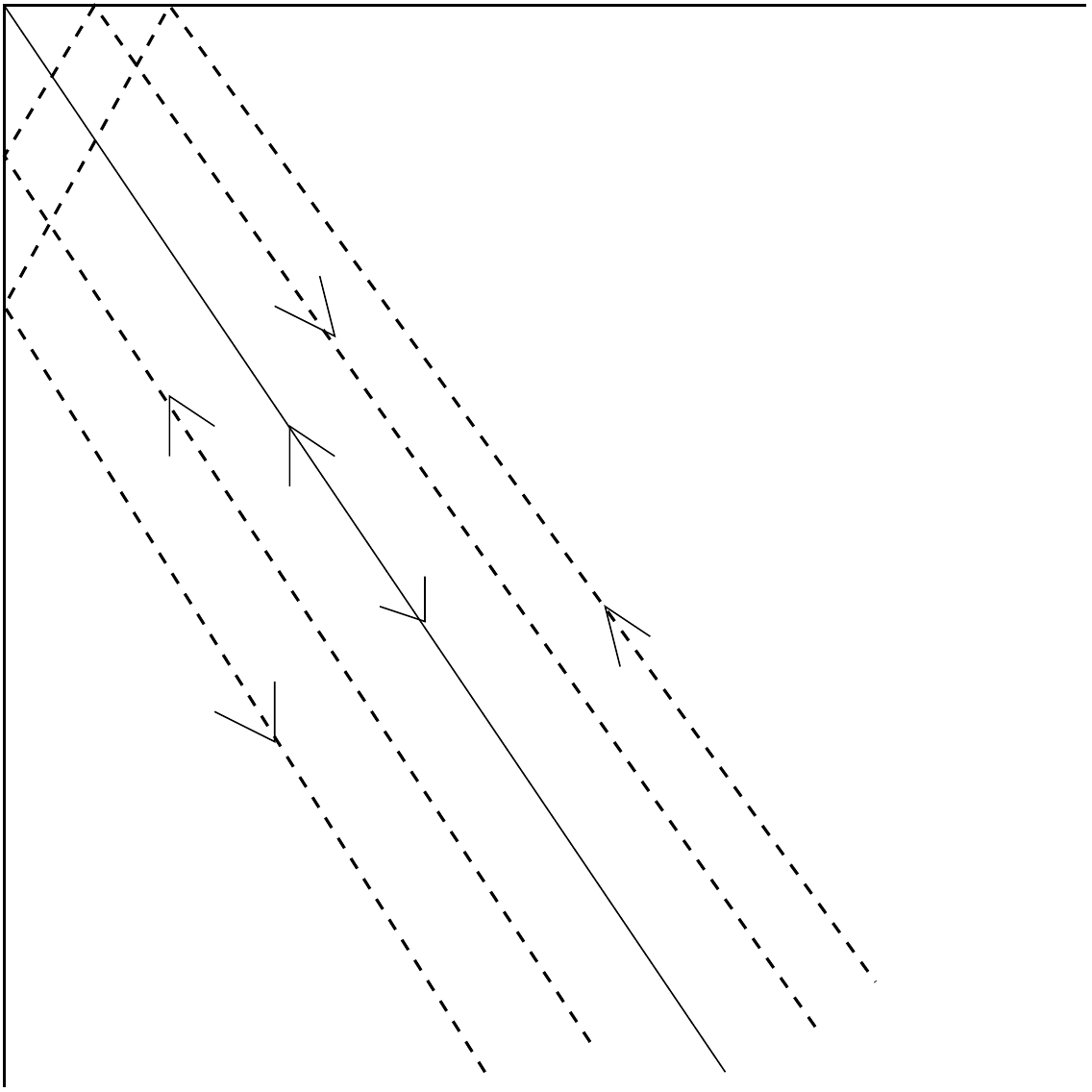_t
} } \quad 
\subfigure[General corner. \label{figcornerb}]
{ \resizebox{7.5cm}{!}{\input 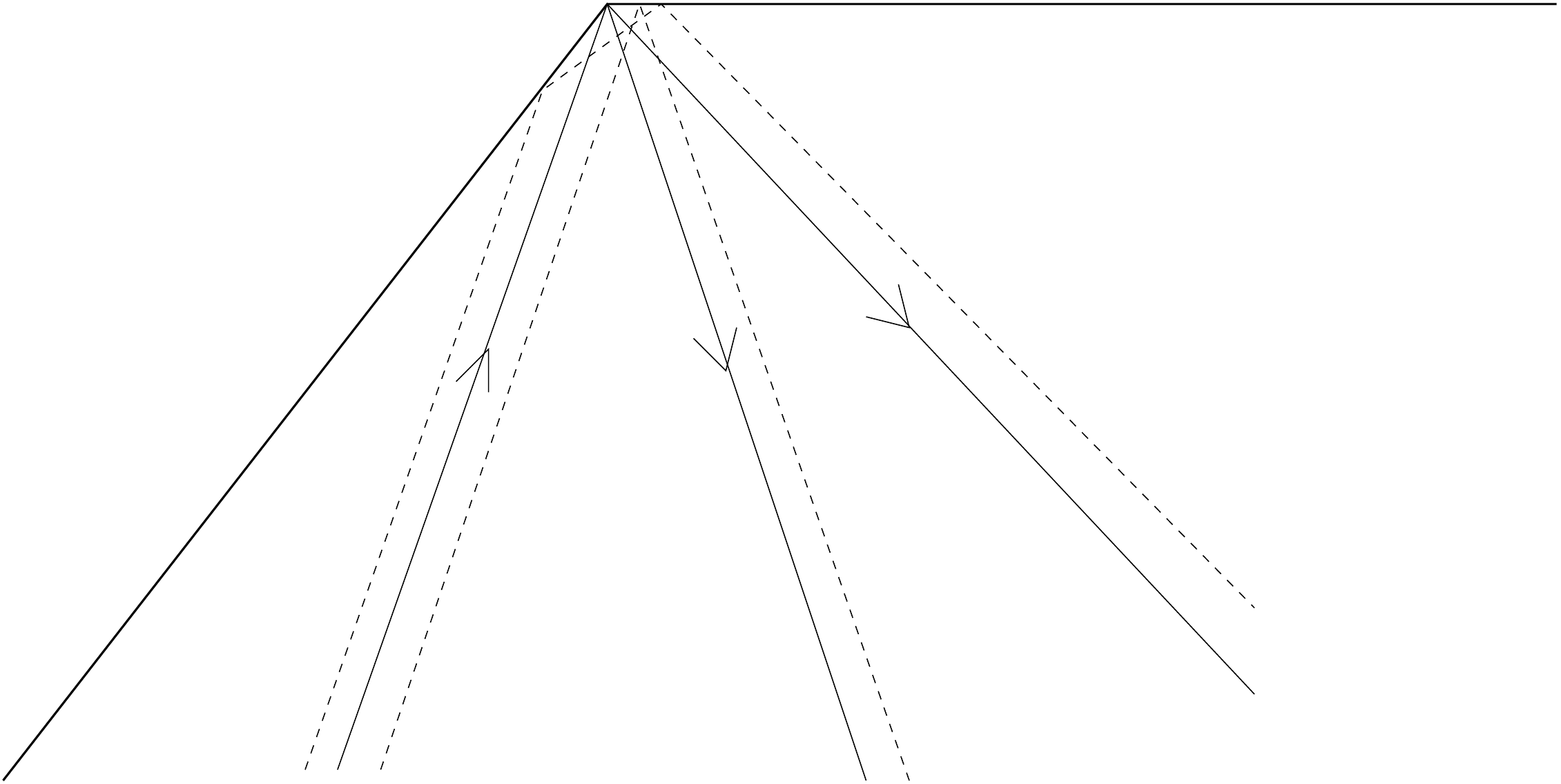_t} }
\caption{Reflection of generalized geodesics at some
  corner.}\label{fig_corner}
\end{center}
\end{figure}

\begin{remark}
  \label{rem: simplification BLR}
  In the case of a 1D wave equation with
  Dirichlet boundary conditions, the corresponding statement of
  Theorem~\ref{mainthm} is proven in \cite{Castro} by means of the
  D'Alembert formula.  The proof we provide in
  Section~\ref{subsec_proofmainthm}, is general and follows
  \cite{BardosLebeauRauch,BurqGerard}. In fact, as already mentioned
  in Section~\ref{sec: framework}, a key step of the
  approach of \cite{BardosLebeauRauch,BurqGerard} is simplified here.
  More precisely, the approach of \cite{BardosLebeauRauch,BurqGerard} consists of several steps. Firstly, a weaker version of the observability
  inequality is proven; in the present article, this is done in
  Lemma~\ref{lemma_weak_obs}. Secondly, the so-called \textit{set of
    invisible solutions} (defined by \eqref{def_NT}) is shown to be
  reduced to zero; in the present article, this is done in Lemma~\ref{lemNT}.
  Thirdly, the
  observability inequality is proven to hold by means of the result of the two previous steps.
  Our simplification with respect to \cite{BardosLebeauRauch,BurqGerard} lies in the second step. The argument is much
  shorter than the original one and, in the present analysis of a
  time varying observation region, it turns out to be crucial, as the
  more classical argument of \cite{BardosLebeauRauch,BurqGerard} cannot be applied.
\end{remark}

\subsection{Consequences}
\subsubsection{Controllability}
By the usual duality argument (Hilbert Uniqueness Method, see
\cite{lions2,lions}), we have the following equivalent result for the control of
the wave equation with a time-dependent control domain, based on the
observability inequality~\eqref{observability-shift} that follows from
Theorem~\ref{cor:mainthm}.

\begin{thmter}\label{cor1}
Let $Q$ be an open subset of $\R \times \ovl{\Omega}$ that satisfies the $t$-GCC. Let $T>T_0(Q,\Omega)$.
If $\partial\Omega\neq\emptyset$ we assume moreover that no
generalized bicharacteristic has a contact of infinite order with
$(0,T)\times\d\Omega$, that is, $G^\infty  = \emptyset$.
Setting $\omega(t) = \{ x\in\Omega\ \mid\  (t,x)\in Q\}$, we consider the wave equation with internal control
\begin{equation}\label{controlledwaveEq}
\d_t^2u-\triangle_g u = \chi_Q f,
\end{equation}
in $(0,T)\times\Omega$, with Dirichlet or Neumann boundary conditions
\eqref{condBC} whenever $\d\Omega\neq\emptyset$, and with
$f\in L^2((0,T)\times\Omega)$.  Then, the controlled equation
\eqref{controlledwaveEq} is exactly controllable in the space
$D(A^{1/2})\times X,$ meaning that, for all $(u^0,u^1)$ and
$(v^0,v^1)$ in $D(A^{1/2})\times X,$ there exists
$f\in L^2((0,T)\times\Omega)$ such that the corresponding solution of
\eqref{controlledwaveEq}, with $(u\ic,\d_t u\ic)=(u^0,u^1)$, satisfies
$(u_{|t=T},\d_t u_{|t=T})=(v^0,v^1)$.
\end{thmter}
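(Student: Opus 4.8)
The plan is to invoke the Hilbert Uniqueness Method (HUM) of J.-L.~Lions, which turns the exact controllability of \eqref{controlledwaveEq} into the observability inequality \eqref{observability-shift} provided by Theorem~\ref{cor:mainthm}. First I would reduce, by linearity and time-reversibility of the free wave group generated by $A$, the exact controllability between arbitrary states to the surjectivity of the control-to-final-state map with vanishing Cauchy data. Precisely, let $L\colon L^2((0,T)\times\Omega)\to D(A^{1/2})\times X$ send $f$ to $(u_{|t=T},\d_t u_{|t=T})$, where $u$ is the solution of \eqref{controlledwaveEq}-\eqref{condBC} with $(u\ic,\d_t u\ic)=(0,0)$; this map is well defined and bounded, by Duhamel's formula and the semigroup generated by $A$, since $f\in L^2$ and $\chi_Q\in L^\infty$. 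If $w$ denotes the free solution of \eqref{waveEq}-\eqref{condBC} issued from $(u^0,u^1)$, then steering $(u^0,u^1)$ to $(v^0,v^1)$ in time $T$ is equivalent to steering the rest state to $(v^0,v^1)-(w_{|t=T},\d_t w_{|t=T})$, so that exact controllability in $D(A^{1/2})\times X$ holds if and only if $L$ is onto.

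Second, I would identify the adjoint $L^\ast$ through the standard integration-by-parts duality. Testing \eqref{controlledwaveEq} (with zero Cauchy data) against a free solution $\phi$ of \eqref{waveEq}-\eqref{condBC}, integrating over $(0,T)\times\Omega$, and using the selfadjointness of $A$ and the boundary conditions \eqref{condBC}, all volume terms carrying $\phi$ cancel and one is left with the identity
\begin{equation*}
\int_0^T\!\!\int_\Omega \chi_Q\, f\, \phi\; dx_g\, dt = \int_\Omega \big(\d_t u_{|t=T}\,\phi_{|t=T} - u_{|t=T}\,\d_t\phi_{|t=T}\big)\, dx_g .
\end{equation*}
The right-hand side is the natural (symplectic) duality pairing between $(u_{|t=T},\d_t u_{|t=T})\in D(A^{1/2})\times X$ and the Cauchy data $(\phi^0,\phi^1)=(\phi_{|t=T},\d_t\phi_{|t=T})\in X\times D(A^{1/2})'$ of $\phi$. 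This shows that $L^\ast(\phi^0,\phi^1)=\chi_Q\,\phi$ restricted to $(0,T)\times\Omega$, whence
\begin{equation*}
\Norm{L^\ast(\phi^0,\phi^1)}{L^2((0,T)\times\Omega)}^2 = \int_0^T\!\!\int_{\omega(t)} |\phi(t,x)|^2\, dx_g\, dt .
\end{equation*}

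Third, since $(Q,T)$ satisfies the $t$-GCC and $G^\infty=\emptyset$, the shifted observability inequality \eqref{observability-shift} of Theorem~\ref{cor:mainthm} applies to $\phi$, and it reads exactly as the coercivity estimate $\Norm{L^\ast(\phi^0,\phi^1)}{L^2}^2\geq C\,\Norm{(\phi^0,\phi^1)}{X\times D(A^{1/2})'}^2$ for all admissible Cauchy data. By the closed-range theorem — equivalently, by applying Lax--Milgram to the HUM operator $\Lambda=LL^\ast$, which the estimate renders an isomorphism from $X\times D(A^{1/2})'$ onto $D(A^{1/2})\times X$ — coercivity of $L^\ast$ is equivalent to surjectivity of $L$. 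This produces the sought control $f\in L^2((0,T)\times\Omega)$ and finishes the argument.

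The one point requiring care is the bookkeeping of the dual pairings and energy levels: a control $f\in L^2$ produces a state in $D(A^{1/2})\times X$, so the adjoint states must be taken in $X\times D(A^{1/2})'$ and observed in $L^2$ through $\chi_Q$ acting on $\phi$ itself, rather than on $\d_t\phi$. This is precisely why the shifted inequality \eqref{observability-shift}, and not \eqref{observability}, is the relevant statement, and why Proposition~\ref{prop: equivalence observability} is established beforehand. No new geometric input is needed beyond Theorem~\ref{cor:mainthm}; the remainder is the abstract HUM machinery.
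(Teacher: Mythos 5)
Your proposal is correct and follows essentially the same route as the paper: the paper proves this theorem in one line by invoking the usual duality argument (the Hilbert Uniqueness Method of Lions) based on the shifted observability inequality \eqref{observability-shift} supplied by Theorem~\ref{cor:mainthm}, exactly as you do. Your write-up merely makes explicit the standard HUM machinery the paper leaves to the references (reduction to zero initial data, identification of $L^\ast(\phi^0,\phi^1)=\chi_Q\phi$ via the symplectic pairing, and coercivity of $\Lambda=LL^\ast$ yielding surjectivity), including the correct bookkeeping of the energy levels $D(A^{1/2})\times X$ versus $X\times D(A^{1/2})'$ that motivates Proposition~\ref{prop: equivalence observability}.
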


\begin{remark}
  In the above result the control operator is $ f \mapsto \chi_Q f$. We
  could choose instead a control operator $f \mapsto   b(t,x) f$ with
  $b$ smooth and such that $Q =\{ (t,x) \in \R \times \ovl{\Omega}\ \mid \   b(t,x) >0\}$. Then with the same $t$-GCC we also have exact
controllability in this case. The equivalent observability inequality
is then 
\begin{equation*}
C \Norm{ (u^0,u^1)}{X\times D(A^{1/2})'}^2 \leq \Norm{b
  u}{L^2((0,T)\times\Omega)}^2 = \int_0^T \!\!\int_{\Omega} \vert b(t,x) u(t,x)\vert^2 \, dx_g\, dt.
\end{equation*}
\end{remark}

\subsubsection{Observability with few sensors}
We give another interesting consequence of Theorem \ref{mainthm}, in
connection with the very definition of the $t$-GCC property,
which can be particularly relevant in view of practical applications.

\begin{corollary}\label{cor_eco}
Let $Q\subset\R\times \ovl{\Omega}$ be an open subset with Lipschitz boundary and let $T>0$ be such that $(Q,T)$ satisfies the $t$-GCC.
Then, every open subset $\mathcal{V}$ of $[0,T]\times\ovl{\Omega}$ (for the
topology induced by $\R\times M$), containing $\d \left( Q\cap
  ([0,T]\times\ovl{\Omega}) \right)$, is such that $(\mathcal{V}, T)$
satisfies the $t$-GCC and, consequently, observability holds for such an
open subset.
\end{corollary}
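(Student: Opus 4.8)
The plan is to prove the geometric statement, namely that $(\mathcal{V},T)$ satisfies the $t$-GCC, and then to deduce observability from Theorem~\ref{mainthm}. Write $Q_T = Q \cap ([0,T]\times\overline{\Omega})$; by hypothesis $\partial Q_T \subseteq \mathcal{V}$, and note $\partial Q_T \subseteq \overline{Q_T} \subseteq [0,T]\times\overline{\Omega}$, so this containment is consistent with $\mathcal{V}$ being a subset of the time slab. Fix a generalized bicharacteristic and let $t\mapsto x(t)$ be the associated ray. Since $(Q,T)$ satisfies the $t$-GCC, there is a time $t_1 \in (0,T)$ with $(t_1,x(t_1)) \in Q$, hence $(t_1,x(t_1)) \in Q_T$. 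The goal is to produce a time $t_\ast \in (0,T)$ with $(t_\ast,x(t_\ast)) \in \mathcal{V}$.

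The core is a connectedness argument running the ray backward in time from $t_1$. Set
\[
t_\ast = \inf\{\, t \in [0,t_1]\ \mid\ (s,x(s)) \in Q \ \text{ for all } s \in [t,t_1]\,\}.
\]
Since $Q$ is open and $(t_1,x(t_1))\in Q$, one has $t_\ast < t_1 < T$, and $(s,x(s)) \in Q$ for all $s \in (t_\ast,t_1]$. Suppose first $t_\ast > 0$. Then $(t_\ast,x(t_\ast)) \notin Q$: otherwise openness of $Q$ would keep the ray in $Q$ slightly before $t_\ast$, contradicting the definition of the infimum. Thus $(t_\ast,x(t_\ast))$ is a limit of points of $Q_T$ (take $s\downarrow t_\ast$) that does not belong to $Q_T$, so $(t_\ast,x(t_\ast)) \in \partial Q_T \subseteq \mathcal{V}$, with $t_\ast \in (0,T)$, as required.

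Suppose instead $t_\ast = 0$, so that $(s,x(s)) \in Q_T$ for every $s \in (0,t_1]$ and therefore $(0,x(0)) = \lim_{s\to 0^+}(s,x(s)) \in \overline{Q_T}$. Since points $(t,x(t))$ with $t<0$ lie outside $[0,T]\times\overline{\Omega}$, the point $(0,x(0))$ is not an interior point of $Q_T$ in $\R\times M$, whence $(0,x(0)) \in \partial Q_T \subseteq \mathcal{V}$. Here the time is $0\notin(0,T)$, so we invoke the openness of $\mathcal{V}$ in $[0,T]\times\overline{\Omega}$: it is a relative neighborhood of $(0,x(0))$, and as $(s,x(s)) \to (0,x(0))$ in $[0,T]\times\overline{\Omega}$ when $s\to 0^+$, we get $(s,x(s)) \in \mathcal{V}$ for all small $s>0$, hence for some $t_\ast \in (0,T)$. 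In both cases the ray meets $\mathcal{V}$ at an interior time, so $(\mathcal{V},T)$ satisfies the $t$-GCC. To apply Theorem~\ref{mainthm} we still need $T > T_0(\mathcal{V},\Omega)$ and, if $\partial\Omega\neq\emptyset$, the absence of rays with infinite-order contact; the latter is a condition on $(\Omega,T)$ alone and is inherited. For the strict time bound, observe that each ray in fact enters the \emph{open} set $\mathcal{V}$ at a first time $\tau(\gamma)$ strictly below $T$ (below $t_1<T$ in the first case, arbitrarily close to $0$ in the second). A standard compactness argument—using $G^\infty=\emptyset$ to guarantee a well-defined continuous flow on a compact set of normalized generalized bicharacteristics, cf.\ Proposition~\ref{prop: uniqueness bicharacteristic flow}—shows $\tau$ is upper semicontinuous, hence attains its maximum $T' = \max_\gamma \tau(\gamma) < T$; thus $T_0(\mathcal{V},\Omega) \le T' < T$ and Theorem~\ref{mainthm} yields observability.

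I expect the endpoint case $t_\ast=0$ to be the main obstacle: when a ray stays inside $Q$ throughout $(0,T)$ and touches $\partial Q_T$ only on the temporal cap $\{0\}\times\overline{\Omega}$, the naive hit occurs at $t=0$, excluded from $(0,T)$, and it is precisely the openness of $\mathcal{V}$ that rescues the argument by furnishing a hit at a slightly positive time. A secondary difficulty is the passage from the pointwise statement ``each ray enters $\mathcal{V}$ before $T$'' to the uniform bound $T_0(\mathcal{V},\Omega)<T$, which genuinely requires compactness of the flow (without it the entrance times could accumulate at $T$). Finally, the Lipschitz regularity of $\partial Q$ is not used in the $t$-GCC argument itself—only the openness of $Q$ matters—but it guarantees that $\partial Q_T$ is a thin set of measure zero, which is what gives the ``few sensors'' interpretation its force, since $\mathcal{V}$ may then be taken of arbitrarily small measure.
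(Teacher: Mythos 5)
Your proof is correct and takes essentially the same route as the paper's: the paper's own two-case argument (either the ray leaves $Q$ at an interior time, so by continuity of its projection it crosses $\partial\left(Q\cap([0,T]\times\ovl{\Omega})\right)$ and is caught by $\mathcal{V}$, or it stays in $Q$ and approaches the temporal cap $\{0\}\times\omega(0)$ or $\{T\}\times\omega(T)$, where the relative openness of $\mathcal{V}$ furnishes a hit at a slightly interior time) is precisely your dichotomy $t_\ast>0$ versus $t_\ast=0$, the latter being, as you correctly identify, the delicate endpoint case. Your added compactness argument giving the strict bound $T_0(\mathcal{V},\Omega)<T$, and your observation that the Lipschitz hypothesis on $\partial Q$ plays no role in the geometric step, are sound refinements of points the paper leaves implicit.
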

\begin{proof}
  Let $\R \ni s \mapsto \comp \gamma(s)$ be a compressed generalized bicharacteristic with $t=
  t(s)$. As $(Q,T)$ satisfies the $t$-GCC, there exists $t_1 \in (0,T)$
  and $s_1 \in \R $ such that $t_1 = t(s_1)$ and $\comp \gamma(s_1) \in
  j(T^\ast (Q))$. Now, there are two cases.
  \begin{description}
    \item[Case 1.] There exists $s_2 \in \R$ such that $t_2 =
      t(s_2) \in (0,T)$ and $\comp \gamma(s_2) \notin
  j(T^\ast (Q))$. The continuity of $s \mapsto
  \comp \gamma(s)$ into  $\comp T^\ast Y \setminus E$, in particular
  of its projection on $\R \times \ovl{\Omega}$, then allows one to
  conclude that there exists $s_3 \in \R $ such that $t_3 =
      t(s_3) \in (0,T)$ and $\comp \gamma(s_3) \in
  j(T^\ast (\mathcal V))$.
\item[Case 2.] For all $s \in \R $ such that $t= t(s) \in
  (0,T)$ we have $\comp \gamma(s) \in  j(T^\ast (Q))$. Such a
  bicharacteristic is illustrated in Figures~\ref{fig_boundaryb} and \ref{fig_boundaryc}.
Then $s \mapsto
  \comp \gamma(s)$ enters $j(T^\ast (\mathcal W))$ for any \nhd $\mathcal W$ of
  $\{T\} \times \omega(T)$ (or $\{0\} \times \omega(0)$). Thus, there
  exists $s_2 \in \R $ such that $t_2 =
      t(s_2) \in (0,T)$  and $\comp \gamma(s_2) \in
 j( T^\ast (\mathcal V))$.
  \end{description}
\end{proof}

\begin{figure}[H]
\begin{center}
\subfigure[\label{fig_boundarya}]
{\resizebox{4.3cm}{!}{\input ./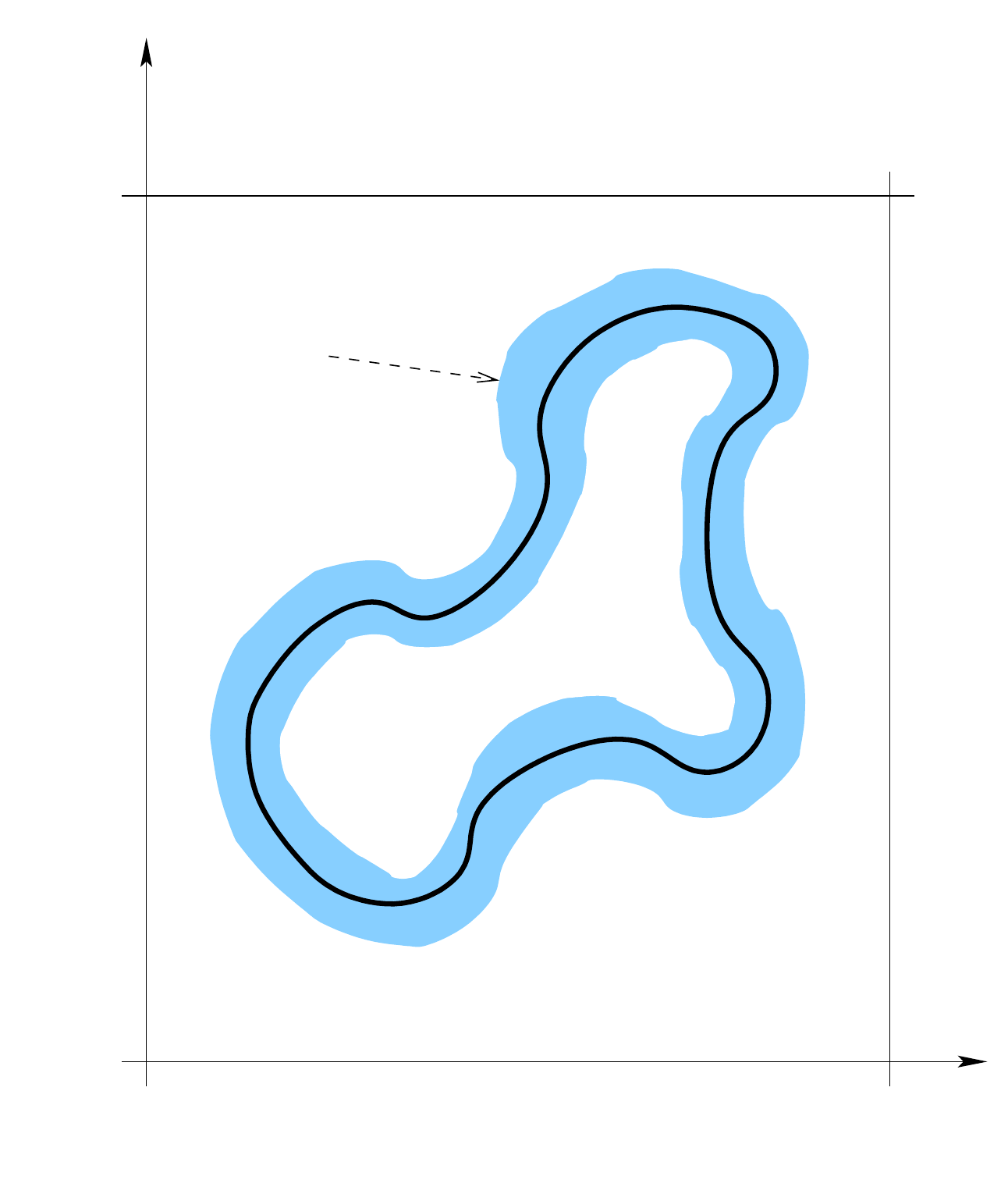_t}}\qquad 
\subfigure[\label{fig_boundaryb}]
{\resizebox{4.3cm}{!}{\input ./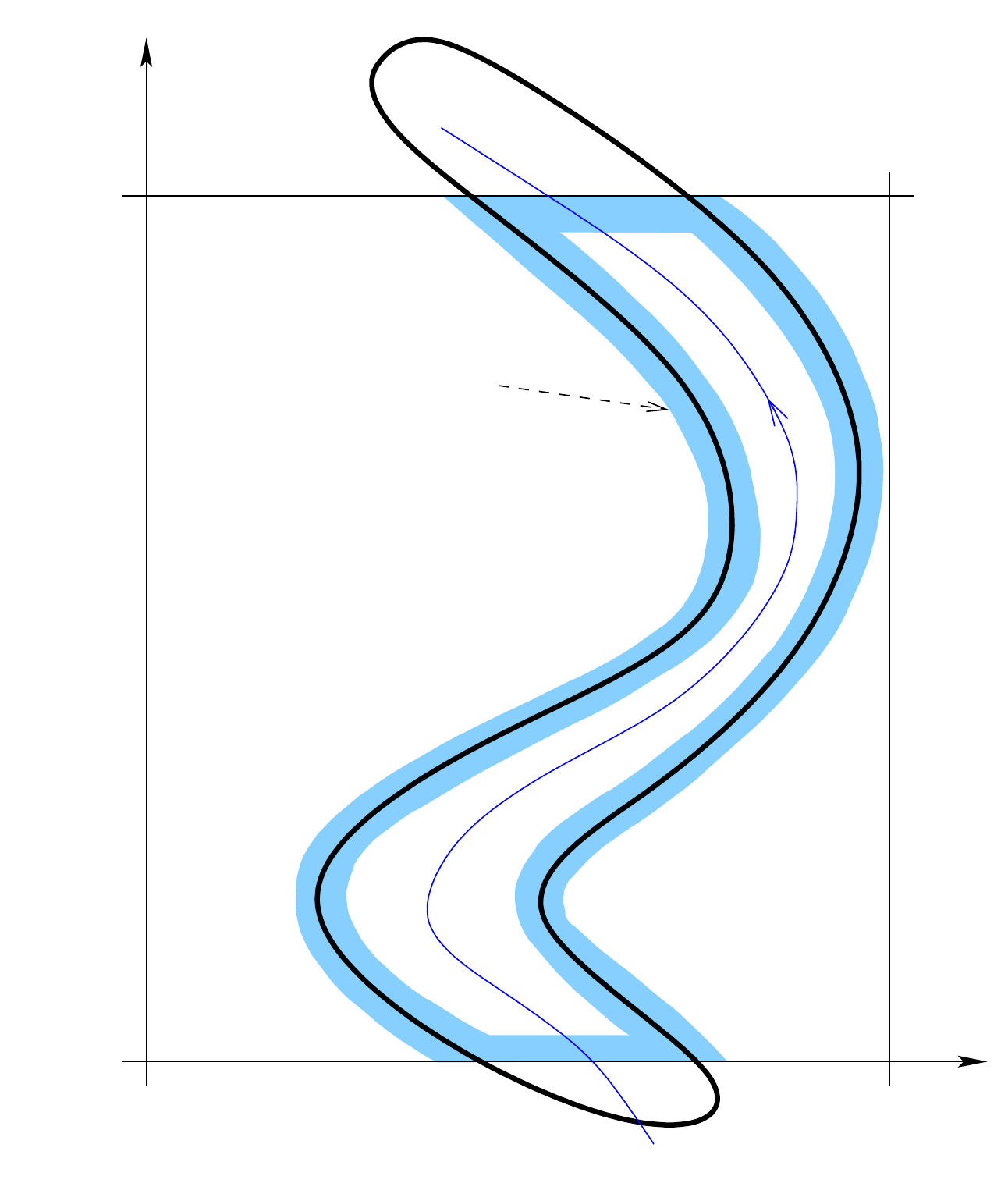_t}}\qquad 
\subfigure[\label{fig_boundaryc}]
{\resizebox{4.3cm}{!}{\input ./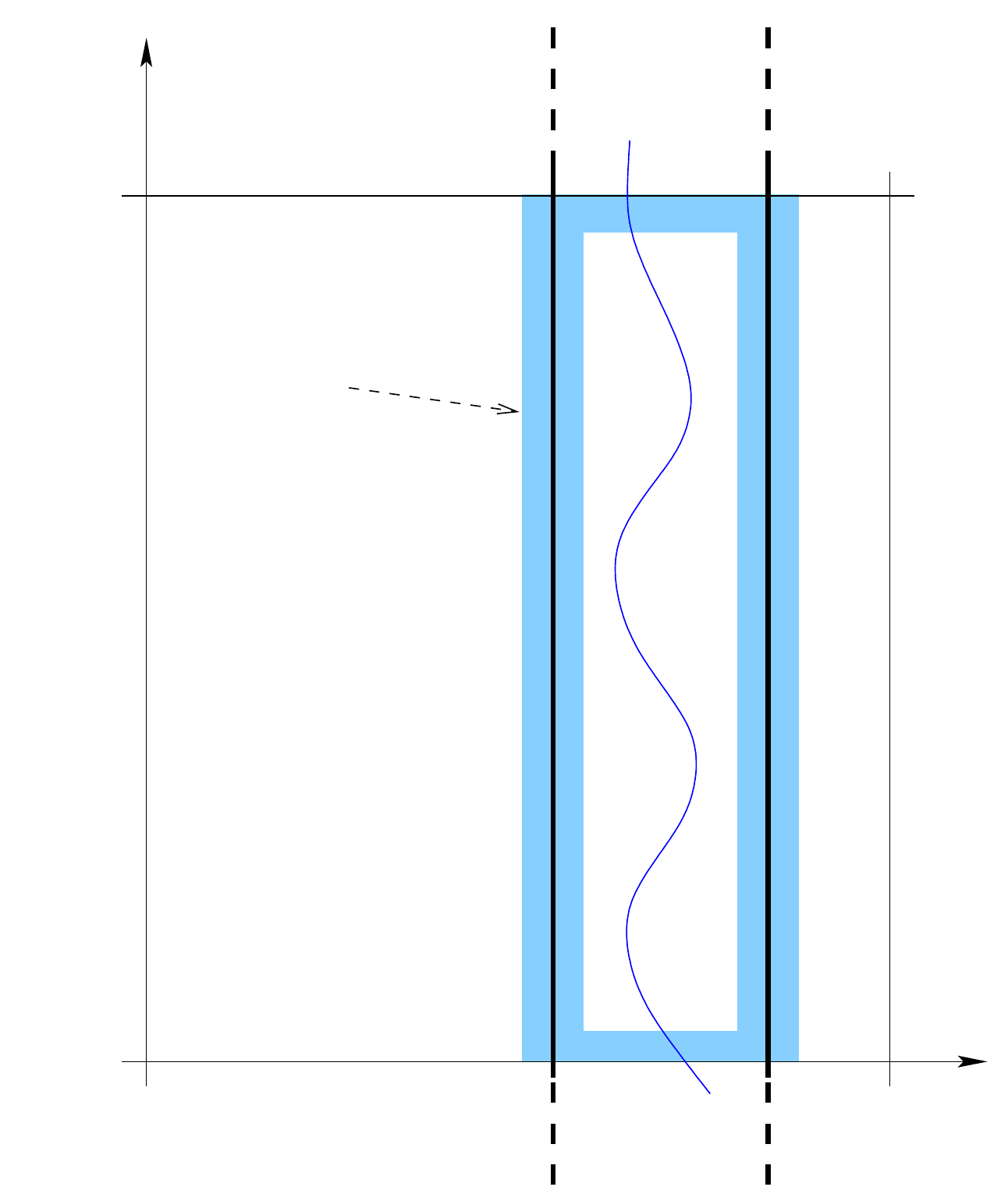_t}}
\caption{Neighborhood $\mathcal{V}$ of $\d \left( Q\cap
    ([0,T]\times\ovl{\Omega}) \right)$ in $[0,T]\times\ovl{\Omega}$
  (for the induced topology).
In \ref{fig_boundaryb} and \ref{fig_boundaryc}, a potential
bicharacteristic that remains in the interior of $Q$ is represented. 
}\label{fig_boundary}
\end{center}
\end{figure}

\begin{remark}$\phantom{-}$
\begin{enumerate}
\item 
The main interest of Corollary \ref{cor_eco} is that it allows one to take the open set $\mathcal{V}$ ``as small as possible", provided that it contains the boundary of $Q\cap ([0,T]\times\ovl{\Omega})$ (see Figure \ref{fig_boundary}).
As a practical consequence, only few sensors are needed to ensure the
observability property, or, by duality, the controllability
property, thus reducing the cost of an experiment.

Somehow, with an internal control we have $d+1$ degrees of freedom for
the control of $d$ variables, and this explains intuitively why the
choice of a ``thin'' open set $\mathcal V$ is possible. The above
corollary roughly states that control is still feasible by using only
$d$ degrees of freedom. In terms of the control domain, this means
that we only need a control domain that is any open neighborhood of a
set of Hausdorff dimension $d$. 
\item Observe that the proof of Corollary~\ref{cor_eco} and
  Figures~\ref{fig_boundaryb} and \ref{fig_boundaryc} show in fact
  that it suffices to choose $\mathcal V$ as the union of a \nhd of
  $\d Q
  \cap (0,T)\times \ovl{\Omega}$ and a \nhd of $Q \cap \{ t=0\}$ (or $Q \cap \{ t=T\}$).
\item 
Note that, if an open  subset $\omega$ of $\ovl{\Omega}$ satisfies the usual GCC, then a small neighborhood of $\d\omega$ does not satisfy necessarily the GCC. In contrast, when considering time-space control domains (i.e., subsets of $\R\times\ovl{\Omega}$), the situation is different. For instance, if $\omega$ satisfies the GCC, then any open subset of $[0,T]\times\ovl{\Omega}$, containing $\d \left( [0,T]\times\omega\right) = \left( [0,T]\times\d\omega \right) \cup \left( \{0\}\times\omega \right) \cup \left( \{T\}\times\omega \right) $, satisfies the $t$-GCC (see Figure \ref{fig_boundaryc}).
\item 
Note that $T(Q,\Omega) \leq \liminf T(\mathcal{V},\Omega)$ as
$\mathcal{V}$ shrinks to $\d \left( Q\cap ([0,T]\times\ovl{\Omega})
\right)$, and that equality may fail as there may exist
some bicharacteristics propagating inside $Q$ and not reaching
$\mathcal V$ for $t \in (t_1,t_2)$ for with $0< t_1 < t_2< T$ (see Figures
\ref{fig_boundaryb} and \ref{fig_boundaryc}).
\end{enumerate}
\end{remark}

\subsubsection{Stabilization}
Theorem \ref{mainthm} has the following consequence for wave equations
with a damping localized on a domain $Q$ that is time-periodic.

\begin{corollary}\label{cor_stab}
Let $Q$ be an open subset of $\R\times\ovl{\Omega}$, satisfying the $t$-GCC. Let
$T> T_0(Q,\Omega)$. If $\partial\Omega\neq\emptyset$, we assume moreover that no
generalized bicharacteristic has a contact of infinite order with
$(0,T)\times\d\Omega$, that is, $G^\infty  = \emptyset$.
Setting $\omega(t) = \{ x\in\Omega\ \mid\ (t,x)\in Q\}$, we assume that $\omega$ is $T$-periodic, that is, 
$\omega(t+T)=\omega(t)$,
for almost every $t\in(0,T)$.
We consider the wave equation with a local internal damping term, 
\begin{equation}\label{dampedwaveEq}
\d_t^2u-\triangle_g u + \chi_\omega \d_t u = 0,
\end{equation}
in $(0,T)\times\Omega$, with Dirichlet or Neumann boundary conditions \eqref{condBC} whenever $\d\Omega\neq\emptyset$.
Then, there exists $\mu\geq 0$ and $\nu >0$ such that any solution of \eqref{controlledwaveEq}, with $(u(0),\d_t u(0))\in D(A^{1/2})\times X$, satisfies
$$
E_0(u)(t) \leq \mu E_0(u)(0) e^{-\nu t},
$$
where we have set
$E_0(u)(t) = \frac{1}{2} \big( \Vert A^{1/2}u(t)\Vert_{L^2(\Omega)}^2 
+ \Vert \d_t u(t)\Vert_{L^2(\Omega)}^2 \big)$.
\end{corollary}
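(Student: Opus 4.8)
The plan is to deduce the exponential decay from the single-period observability of the \emph{conservative} equation provided by Theorem~\ref{mainthm}, by comparing the damped flow with the free flow, and then to iterate over periods using the $T$-periodicity of the damping. First I would record the dissipation identity: multiplying \eqref{dampedwaveEq} by $\d_t u$ and integrating over $\Omega$ (the boundary terms vanish under either boundary condition) shows that $t\mapsto E_0(u)(t)$ is non-increasing and that, for $0\le a<b$,
$$
E_0(u)(a)-E_0(u)(b)=\int_a^b\!\!\int_{\omega(t)}|\d_t u|^2\,dx_g\,dt .
$$
Consequently it suffices to establish a single strict decay estimate $E_0(u)(T)\le \kappa\,E_0(u)(0)$, with a constant $\kappa\in[0,1)$ \emph{independent of the initial data}.

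To produce such a $\kappa$, I would introduce the solution $v\in\Con^0(\R;D(A^{1/2}))\cap\Con^1(\R;X)$ of the \emph{free} equation \eqref{waveEq}--\eqref{condBC} with the same initial data $(u^0,u^1)$ as $u$. Since the hypotheses on $Q$, $T$ and $G^\infty$ are exactly those of Theorem~\ref{mainthm}, observability holds for $v$, giving $C\,E_0(v)(0)\le \int_0^T\!\int_{\omega(t)}|\d_t v|^2\,dx_g\,dt$, and $E_0(v)(0)=E_0(u)(0)$. The difference $w=u-v$ solves $\d_t^2 w-\triangle_g w=-\chi_\omega\,\d_t u$ with zero initial data, so the standard energy estimate $\tfrac{d}{dt}E_0(w)=\langle \d_t w,-\chi_\omega\d_t u\rangle_{L^2}$ yields $\sqrt{E_0(w)(t)}\le \tfrac{1}{\sqrt2}\int_0^t\Norm{\chi_{\omega(s)}\d_t u(s)}{L^2}\,ds$. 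Writing $D:=E_0(u)(0)-E_0(u)(T)=\int_0^T\!\int_{\omega(t)}|\d_t u|^2$ and using Cauchy--Schwarz in $t$, this bounds $\Norm{\d_t w(t)}{L^2}\le \sqrt{T}\,D^{1/2}$ for all $t\in[0,T]$, hence $\big(\int_0^T\!\int_{\omega(t)}|\d_t w|^2\big)^{1/2}\le T\,D^{1/2}$. The triangle inequality $\d_t v=\d_t u-\d_t w$ then gives $\big(\int_0^T\!\int_{\omega(t)}|\d_t v|^2\big)^{1/2}\le (1+T)\,D^{1/2}$, and combining with observability for $v$ produces $C\,E_0(u)(0)\le (1+T)^2 D=(1+T)^2\big(E_0(u)(0)-E_0(u)(T)\big)$. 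This is exactly the desired one-period estimate with $\kappa=1-\tfrac{C}{(1+T)^2}$, which lies in $[0,1)$ since $D\le E_0(u)(0)$ forces $\kappa\ge 0$.

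Finally I would iterate. Because $\chi_\omega$ is $T$-periodic in $t$, \eqref{dampedwaveEq} generates a two-parameter evolution family $\mathcal U(t,s)$ on $D(A^{1/2})\times X$ with $\mathcal U(t+T,s+T)=\mathcal U(t,s)$; hence $\mathcal U(nT,0)=\mathcal U(T,0)^n$ for every $n\in\N$. Applying the one-period estimate to the data $\mathcal U((k-1)T,0)(u^0,u^1)$ (legitimate by periodicity) gives $E_0(u)(kT)\le \kappa\,E_0(u)((k-1)T)$, whence $E_0(u)(nT)\le \kappa^{\,n}E_0(u)(0)$. For arbitrary $t$, writing $t=nT+r$ with $0\le r<T$ and using monotonicity of the energy, $E_0(u)(t)\le E_0(u)(nT)\le \kappa^{\lfloor t/T\rfloor}E_0(u)(0)$, which is of the claimed form $E_0(u)(t)\le \mu\,E_0(u)(0)\,e^{-\nu t}$ with $\nu=-\tfrac1T\ln\kappa>0$ and $\mu=1/\kappa$.

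The core of the argument is classical once observability is available; the main point requiring care is the non-autonomous nature of the problem. Unlike the fixed-domain BLR setting, the damping here genuinely depends on $t$, so there is no single dissipative semigroup, and the reduction to long-time decay rests entirely on the $T$-periodicity through the relation $\mathcal U(nT,0)=\mathcal U(T,0)^n$. I would therefore take some care to justify the well-posedness and the periodicity of $\mathcal U(t,s)$, the bounded, measurable perturbation $\chi_{\omega(t)}\d_t$ of the skew-adjoint wave generator fitting into the standard theory of evolution families. The comparison with the free solution is the mechanism that converts observability of the \emph{conservative} equation (Theorem~\ref{mainthm}) into a decay estimate for the \emph{damped} one, and it is this step, rather than any delicate microlocal analysis, that carries the proof.
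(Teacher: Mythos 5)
Your proof is correct and follows essentially the same route as the paper's: both compare the damped solution with the free solution sharing the same initial data, convert the observability inequality of Theorem~\ref{mainthm} into a one-period energy contraction via the dissipation identity $\frac{d}{dt}E_0(u)=-\Vert\d_t u\Vert_{L^2(\omega(t))}^2$, and iterate over periods using the $T$-periodicity of $\omega$. The only differences are cosmetic: you bound the correction $w=u-v$ by a Gr\"onwall/Cauchy--Schwarz estimate where the paper integrates the energy identity twice with Fubini and Young, and you work on a single period with $\kappa=1-C/(1+T)^2\in[0,1)$, where the degenerate case $\kappa=0$ should be ruled out (e.g.\ by slightly enlarging $\kappa$, or as the paper does by taking the observation window $S=\ell T$ large enough so that $0<\alpha<1$) before writing $\nu=-\frac1T\ln\kappa$.
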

Corollary \ref{cor_stab} is proven in Section \ref{sec_proof_cor_stab}.

\section{Proofs}\label{sec_proof}

\subsection{Proof of Theorem \ref{mainthm}}\label{subsec_proofmainthm}
The proof follows the classical chain of arguments developed in
\cite{BardosLebeauRauch,BurqGerard}, with yet a simplification of one
of the key steps, as already pointed out in Remark~\ref{rem:
  simplification BLR}.  This simplification is a key element here. The
original proof scheme would not allow one to conclude in the case of a
time-dependent control domain.

For  a solution $u$ of \eqref{waveEq}-\eqref{condBC}, with $u_{|t=0} =
u^0 \in
D(A^{1/2})$ and $\d_t u_{|t=0} = u^1 \in X$,
we use the natural energy 
\begin{align}
  \label{eq: energy -1}
  E_{0}(u) (t) = \hf \big(\| u(t)\|_{D(A^{1/2})}^2 + \| \d_t u(t)\|_{X}^2\big).
\end{align}
In the proof, we use the fact
that this energy remains constant as time evolves, that is, 
\begin{align}
  \label{eq: energy preserved}
  E_{0}(u)(t) =  E_{0}(u)(0)  = \hf \big(\| u^0\|_{D(A^{1/2})}^2 + \|
  u^1\|_{X}^2\big) = \hf \Norm{(u^0,u^1)}{{D(A^{1/2})}\times X}^2,
\end{align}
and we shall simply write $E_{0}(u)$ at places.

We first achieve a weak version of the observability inequality.
\begin{lemma}\label{lemma_weak_obs}
There exists $C>0$ such that
\begin{equation}\label{weak_observability}
 C \Norm{(u^0,u^1)}{{D(A^{1/2})}\times X}^2
 \leq \Norm{\chi_Q \d_t u}{L^2((0,T)\times\Omega)}^2
 + \Norm{(u^0,u^1)}{X \times D(A^{1/2})'}^2,
\end{equation}
for all $(u^0,u^1)\in {D(A^{1/2})}\times X$, where $u$ is the corresponding solution of \eqref{waveEq}-\eqref{condBC} with $u\ic=u^0$ and $\d_t u\ic=u^1$.
\end{lemma}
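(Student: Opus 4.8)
The plan is to establish the weak observability inequality \eqref{weak_observability} by contradiction, which is the standard first step in the Bardos–Lebeau–Rauch scheme. Suppose the inequality fails: then for every $n\in\N$ there exists a solution $u_n$ of \eqref{waveEq}-\eqref{condBC} with initial data $(u_n^0,u_n^1)\in D(A^{1/2})\times X$, normalized by $\Norm{(u_n^0,u_n^1)}{D(A^{1/2})\times X}=1$, such that
\begin{equation*}
  \Norm{\chi_Q \d_t u_n}{L^2((0,T)\times\Omega)}^2
  + \Norm{(u_n^0,u_n^1)}{X\times D(A^{1/2})'}^2 \longrightarrow 0
\end{equation*}
as $n\to\infty$. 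The goal is to derive a contradiction with the normalization. The normalization gives a bounded sequence in $D(A^{1/2})\times X$, so after extracting a subsequence we may assume weak convergence $(u_n^0,u_n^1)\rightharpoonup (u_*^0,u_*^1)$ in $D(A^{1/2})\times X$. Because the embedding $D(A^{1/2})\times X \hookrightarrow X\times D(A^{1/2})'$ is compact (Rellich), this weak limit is in fact a \emph{strong} limit in the weaker space, and the second term above forces $(u_*^0,u_*^1)=0$. Hence $(u_n^0,u_n^1)\rightharpoonup 0$ weakly in $D(A^{1/2})\times X$ while retaining unit norm, which means the sequence carries its energy to high frequencies.

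The second ingredient is \textbf{microlocal defect measures}. To the sequence $u_n$ (or rather its time derivative $\d_t u_n$, which is bounded in $L^2_{\mathrm{loc}}(\R;X)$) one associates a nonnegative microlocal defect measure $\mu$ on $\hat\Sigma$ (more precisely on the cosphere quotient $S^\ast\hat\Sigma$), following Burq–Gérard. Since the sequence does not converge strongly to $0$ in the energy norm, the measure $\mu$ is nonzero; the key point is that the contribution of the low-frequency part vanishes by the strong convergence established above, so all the mass of $\mu$ sits at infinite frequency and $\mu\neq 0$. The propagation theorem of Melrose–Sjöstrand for the generalized bicharacteristic flow asserts that $\mu$ is invariant under that flow: its support is a union of compressed generalized bicharacteristics. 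This is exactly where the hypothesis $G^\infty=\emptyset$ is used, via Proposition~\ref{prop: uniqueness bicharacteristic flow}, to guarantee that the flow is well-defined and that propagation holds.

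Finally one uses the hypothesis that the right-hand side vanishes: since $\Norm{\chi_Q\d_t u_n}{L^2((0,T)\times\Omega)}\to 0$, the measure $\mu$ must vanish over the open set $Q\cap\{0<t<T\}$, i.e. $\mu$ gives no mass to any point $(t,x,\tau,\xi)$ with $t\in(0,T)$ and $(t,x)\in Q$. But by the $t$-GCC (Definition~\ref{def_tGCC}), \emph{every} generalized bicharacteristic meets $Q$ at some time $t(s)\in(0,T)$. By flow-invariance of $\mu$ and the fact that $Q$ is open, any bicharacteristic in $\supp\mu$ would pass through the region where $\mu=0$, and invariance forces the whole bicharacteristic to carry zero mass; since $\supp\mu$ is a union of such curves, we conclude $\mu\equiv 0$, contradicting $\mu\neq 0$. \textbf{The main obstacle} I expect is the careful handling of the boundary: near $\R\times\d\Omega$ the naive bicharacteristic flow is replaced by the compressed generalized flow, and one must verify that the defect measure is supported in $\hat\Sigma$ with no mass in the elliptic set $E$, and that the propagation of the measure through hyperbolic reflections, glancing, and diffractive points is correctly captured — this is precisely the technically delicate part inherited from \cite{BardosLebeauRauch,BurqGerard}, controlled here by the assumption $G^\infty=\emptyset$.
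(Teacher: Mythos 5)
Your proof is correct and follows essentially the same route as the paper's: contradiction with a normalized sequence, weak convergence to zero via the compact embedding, a microlocal defect measure vanishing on $j(T^\ast Q)$, invariance of the measure under the compressed generalized bicharacteristic flow (with $G^\infty=\emptyset$ ensuring uniqueness of that flow), and the $t$-GCC forcing $\mu\equiv 0$. The only cosmetic difference is where the final contradiction lands: you contradict $\mu\neq 0$ (which tacitly uses conservation of energy to rule out strong convergence of the normalized sequence), whereas the paper deduces from $\mu\equiv 0$ the strong convergence $u_n\to 0$ in $H^1((0,T)\times\Omega)$ and then contradicts the normalization through the conserved energy $E_0$ --- the same argument, merely rearranged.
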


\begin{remark}
With respect to the desired observability inequality
\eqref{observability}, the inequality \eqref{weak_observability}
exhibits a penalization term, $\Norm{(u^0,u^1)}{X \times D(A^{1/2})'}$, on the right-hand side. Note that the Sobolev
spaces under consideration have no importance, and instead of
$X \times D(A^{1/2})'$ we could as well have chosen $H^{1/2-s}(\Omega)\times H^{-1/2-s}(\Omega)$, for any $s>0$.
The key point lies in the fact that the space ${D(A^{1/2})}\times X$ is compactly embedded into $X \times D(A^{1/2})'$.
\end{remark}

\begin{proof}
We prove the result by contradiction. We assume that there exists
a sequence $(u^0_n,u^1_n)_{n\in\N}$ in ${D(A^{1/2})}\times X$ such that
\begin{align}
& \Norm{(u^0_n,u^1_n)}{{D(A^{1/2})}\times X}=1
\quad\forall n\in\N,
\label{e1} \\
& \Norm{(u^0_n,u^1_n)}{X \times D(A^{1/2})'}
\xrightarrow[n\rightarrow+\infty]{} 0, \label{e2} \\
\intertext{and}
& \Norm{\chi_Q \d_t u_n}{L^2((0,T)\times\Omega)}
\xrightarrow[n\rightarrow+\infty]{} 0, \label{e3}
\end{align}
where $u_n$ is the solution of \eqref{waveEq}-\eqref{condBC}
satisfying ${u_n}\ic=u_n^0$ and
$\partial_t {u_n}\ic=u_n^1$.
From \eqref{e1}, the sequence $(u^0_n,u^1_n)_{n\in\N}$ is bounded in
${D(A^{1/2})}\times X$, and using \eqref{e2} the only possible closure
point for the weak topology of ${D(A^{1/2})}\times X$ is
$(0,0)$. Therefore, the sequence $(u^0_n,u^1_n)_{n\in\N}$ converges to
$(0,0)$ for the weak topology of ${D(A^{1/2})}\times X$. By continuity
of the flow with respect to initial data, it follows that the sequence
$(u_n)_{n\in\N}$ of corresponding solutions converges to $0$ for the
weak topology of $H^1((0,T)\times\Omega)$; in particular, it is bounded.

Up to a subsequence (still denoted $(u_n)_{n\in\N}$ in what follows),
according to Proposition~\ref{eq: existence measure} in Appendix \ref{appendice}, there exists  a
microlocal defect measure $\mu$ on the cosphere quotient
space $S^\ast  \hat{\Sigma}$ introduced in Section~\ref{sec: Gk}, such that
\begin{align}
\label{eq: defect measure}
( R u_n , u_n) \xrightarrow[n\to+\infty]{} \langle \mu, \kappa(R)
\rangle,
\end{align}
for every $R \in \Psi^0(Y)$  with $\kappa(R)$ to  be understood as a
continuous function on $S^\ast  \hat{\Sigma}$.

It follows from \eqref{e3} that $\mu$ vanishes in
$j(T^*Q) \cap S^\ast \hat{\Sigma}$.  As is well known, the measure
$\mu$ is invariant\footnote{The theorem of propagation for measures is
  proven in \cite{Lebeau:96} for a damped wave equation with Dirichlet
  boundary condition.  We claim that the same proof applies with
  Neumann boundary condition. First, using the notations in
  \cite{Lebeau:96}, we still have $\mu_\partial=0$ in the Neumann
  case, where $\mu_\partial$ is defined by (A.18) in \cite{Lebeau:96}.
  Then, the proof of theorem A.1 in \cite{Lebeau:96} about the
  propagation of the measure still applies in the Neumann case.
  First, one can assume that the tangential operator $Q_0$ that
  appears in (A.28) satisfies $\partial_x Q_0\vert_{x=0}=0$, to assure
  that $Q_0 u$ still satisfies the Neumann boundary condition. Then
  inequality (A.29) in \cite{Lebeau:96} holds true as well, since the
  theorem of propagation of Melrose-Sj\"ostrand holds true in the
  Neumann case \cite{MelroseSjostrand1,MelroseSjostrand2}. Finally,
  estimate (A.33) of \cite{Lebeau:96} remains valid since the energy
  estimate holds true in the Neumann case.} under the compressed
generalized bicharacteristic flow \cite{Lebeau:96,BurqLebeau}. The
definition of this flow is recalled in Section~\ref{sec: generalized
  bichar}.

The $t$-GCC assumption for $Q$ then implies that
$\mu$ vanishes identically (see
\cite{BardosLebeauRauch,BurqGerard}). This precisely means that
$(u_n)_{n\in \N}$ strongly converges to $0$ in $H^1((0,T)\times
\Omega)$.

Now, we let $0< t_1< t_2 < T$. The above strong convergence implies that
\begin{equation*}
\int_{t_1}^{t_2} E_{0}(u_n)(t) \, dt \xrightarrow[n\rightarrow+\infty]{} 0,
\end{equation*}
with the energy $E_{0}$ defined in \eqref{eq: energy -1}. 
As this energy is preserved (with respect to time $t$), this implies that 
\begin{align*}
  E_{0}(u_n)(0) = \hf \big(\| u_n^0\|_{D(A^{1/2})}^2 + \|
  u_n^1\|_{X}^2\big) \xrightarrow[n\rightarrow+\infty]{} 0, 
\end{align*}
yielding a contradiction. 
\end{proof}

We define the set of \emph{invisible solutions} as
\begin{multline}\label{def_NT}
N_T = \{ v \in H^1((0,T)\times\Omega) \ \mid\ v \ \text{is a solution of
  \eqref{waveEq}-\eqref{condBC},} \\\text{with $v\ic \in {D(A^{1/2})}$, $\d_t v\ic  \in X$
 \ and}\ \chi_Q \d_t v=0 \}, 
\end{multline}
equipped with the norm $\| v\|_{N_T}^2  = \| v\ic\|_{D(A^{1/2})}^2 + \|
  \d_t v\ic \|_{X}^2$.
Clearly, $N_T$ is closed.

\begin{lemma}\label{lemNT}
We have $N_T=\{0\}$.
\end{lemma}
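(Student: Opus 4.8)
The plan is to run the classical three-step scheme of \cite{BardosLebeauRauch,BurqGerard}, in the form adapted to a non-cylindrical $Q$: (i) show that $N_T$ is finite-dimensional; (ii) realize $\d_t$ as an endomorphism of $N_T$, whose eigenvectors are time-oscillations of eigenfunctions of $A$; (iii) kill those eigenfunctions by unique continuation. For (i), I would apply Lemma~\ref{lemma_weak_obs} to $v\in N_T$: since $\chi_Q\d_t v=0$, inequality \eqref{weak_observability} reduces to $C\Norm{(v\ic,\d_t v\ic)}{D(A^{1/2})\times X}^2\leq\Norm{(v\ic,\d_t v\ic)}{X\times D(A^{1/2})'}^2$, so that on $N_T$ the energy norm is dominated by the weaker one. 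As $\Omega$ is bounded, $D(A^{1/2})\times X$ embeds compactly into $X\times D(A^{1/2})'$; combined with the last inequality this makes the closed unit ball of $N_T$ compact for the energy norm, whence $\dim N_T<\infty$.

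The heart of the matter, and the place where the time-dependence of $Q$ is felt, is step (ii). The classical proof produces $\d_t v\in N_T$ from the fact that, for a cylinder $Q=(0,T)\times\omega$, the time-translates of $v$ are again invisible, so that difference quotients stay in $N_T$ and converge to $\d_t v$; this is precisely the use of the time-invariance of the observation set, and it is not available here. I would replace it by the following purely local observation: since $Q$ is open and $\d_t v$ vanishes on $Q$, every derivative $\d_t^k v$ with $k\geq 1$ vanishes on $Q$ as well, so in particular $\chi_Q\d_t(\d_t v)=\chi_Q\d_t^2 v=0$ and $\d_t v$ again satisfies the invisibility condition. Together with the finite-dimensionality from step (i), this lets one view $\d_t$ as an endomorphism of a finite-dimensional space of invisible solutions. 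I expect the delicate point — indeed the main obstacle — to be the regularity bookkeeping needed to turn this into a genuine endomorphism $\d_t:N_T\to N_T$ (a priori $\d_t v$ only carries data in $X\times D(A^{1/2})'$): this must be extracted from finite-dimensionality rather than from translation invariance, and it is exactly where the simplification announced in Remark~\ref{rem: simplification BLR} lies.

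Granting step (ii), I would conclude as follows. After complexification, $\d_t$ admits an eigenvalue $\lambda\in\C$ with a nonzero eigenvector, necessarily of the form $v(t,x)=e^{\lambda t}\varphi(x)$. Substituting into \eqref{waveEq} gives $-\triangle_g\varphi=-\lambda^2\varphi$, that is $A\varphi=-\lambda^2\varphi$; since $A$ is selfadjoint and positive, $-\lambda^2\geq 0$, hence $\lambda=i\sigma$ with $\sigma\in\R$ and $A\varphi=\sigma^2\varphi$. If $\sigma=0$, then $A\varphi=0$, and the invertibility of $A$ on $X$ forces $\varphi=0$. If $\sigma\neq 0$, the invisibility $\chi_Q\d_t v=0$ reads $i\sigma e^{i\sigma t}\varphi(x)=0$ on $Q$, so $\varphi$ vanishes on the nonempty open set $\mathcal O=\cup_{t\in(0,T)}\omega(t)\subset\Omega$. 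As $\varphi$ is an eigenfunction of $-\triangle_g$, smooth by elliptic regularity, unique continuation for second-order elliptic operators on the connected manifold $\Omega$ forces $\varphi\equiv 0$. In either case the eigenvector vanishes, contradicting $N_T\neq\{0\}$; therefore $N_T=\{0\}$.
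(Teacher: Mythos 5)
Your steps (i) and (iii) match the paper's argument: finite dimensionality of $N_T$ from Lemma~\ref{lemma_weak_obs} together with the compact embedding of $D(A^{1/2})\times X$ into $X\times D(A^{1/2})'$, and then the eigenvalue/elliptic unique continuation endgame (including discarding $\lambda=0$ by invertibility of $A$ on $X$). The genuine gap is exactly the point you flag in step (ii) and then leave unresolved: you have only shown that $\d_t v$ satisfies the invisibility condition in the distributional sense, not that $\d_t v\in N_T$, since membership in $N_T$ requires $\d_t v\in H^1((0,T)\times\Omega)$ with $(\d_t v)\ic\in D(A^{1/2})$ and $(\d_t^2 v)\ic\in X$, whereas a priori $\d_t v$ only carries data in $X\times D(A^{1/2})'$. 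Moreover, your proposed mechanism for closing this gap --- extracting the regularity ``from finite-dimensionality'' --- points in the wrong direction and I see no way to make it work: without time-translation invariance of $Q$ the difference-quotient trick is unavailable, and finite dimensionality of a space of solutions gives no handle on the regularity of an individual solution.

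What the paper actually does, and what constitutes the simplification announced in Remark~\ref{rem: simplification BLR}, is a microlocal regularity argument placed \emph{before} finite dimensionality. For $v\in N_T$ one has $\d_t v=0$ on the open set $Q$, so locally over $Q$ the solution is independent of $t$ and its wavefront set over $Q$ lies in $\{\tau=0\}$; since $\tau$ cannot vanish on $\Char(p)\setminus 0$, and since $\square_g v=0$ makes $v$ microlocally smooth at non-characteristic points, $v$ is smooth over $Q$. By the $t$-GCC every compressed generalized bicharacteristic passes over $Q$, and the Melrose--Sj\"ostrand propagation of singularities theorem (see \cite[Theorem 24.5.3]{Hoermander:V3}) propagates this smoothness along the flow: all elements of $N_T$ are $C^\infty$ on $(0,T)\times\Omega$ up to the boundary. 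This is precisely where the hypothesis $G^\infty=\emptyset$ and the uniqueness of the generalized flow (Proposition~\ref{prop: uniqueness bicharacteristic flow}) are used. Smoothness, combined with your correct observation that invisibility is inherited by $\d_t v$ because the operator and the boundary condition are time-independent, yields $\d_t v\in N_T$ at no cost, and the rest of your argument then goes through verbatim. So your skeleton is the right one, but the load-bearing step is missing, and your guess about how to supply it is not the paper's (or, as far as I can see, a workable) mechanism.
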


In other words, due the $t$-GCC assumption, there is no nontrivial invisible solution.

\begin{proof}
  First, the $t$-GCC assumption combined with the propagation of
  singularities along the generalized bicharacteristic flow (see
  \cite[Theorem 24.5.3]{Hoermander:V3}) implies that all elements of
  $N_T$ are smooth functions on $(0,T)\times\Omega$, up to the
  boundary. In particular, if $v\in N_T$ then $\d_t v\in N_T$.

  Second, we remark that, since the operator $\d_t^2 - \triangle_g$ is
  time-independent (as well as the boundary condition), the space $N_T$ is invariant under the action of
  the operator $\d_t$.

  Third, applying the weak observability inequality of Lemma
  \ref{lemma_weak_obs} gives
  \begin{equation*}
    C  \| v\|_{N_T}^2  
    = C \Norm{(v\ic,\d_t v\ic )}{{D(A^{1/2})}\times X}
    \leq\Norm{(v\ic,\d_t v\ic )}{X \times D(A^{1/2})'},
   \end{equation*}
for every $v \in N_T$. Since ${D(A^{1/2})}\times(D(A^{1/2}))'$ is compactly embedded into $X \times D(A^{1/2})'$, this implies that the unit ball of $N_T$ is compact and thus $N_T$ is finite dimensional.

We are now in a position to prove the lemma. The proof goes by contradiction. Let us assume that $N_T\neq\{0\}$. The operator $\d_t:N_T\rightarrow N_T$ has at least one (complex) eigenvalue $\lambda$, associated with an eigenfunction $v\in N_T\setminus\{0\}$. Since $\d_t v=\lambda v$, it follows that $v(t,x)=e^{\lambda t} w(x)$,
and since $(\d_{t}^2-\triangle_g)v=0$ we obtain $(\lambda^2-\triangle_g)w=0$. 
Note that $ \lambda\neq 0$ (in the Neumann case, we have $w\in L^2_0(\Omega)$).
Now, take any $t\in(0,T)$ such that $\omega(t)=\{x\in\Omega\ \mid\  (t,x)\in Q\}\neq\emptyset$. Since
$\chi_Q\d_t v=0$ and thus $\chi_Q v=0$, it follows that $w=0$ on the open set $\omega(t)$. 
By elliptic unique continuation we then infer that $w=0$ on the whole $\Omega$, and hence $v=0$. This is a contradiction.
\end{proof}

Let us finally derive the observability inequality
\eqref{observability}. To this aim, the compact term on the
right-hand-side of \eqref{weak_observability} must be removed.  We
argue again by contradiction, assuming that there exists a sequence
$(u^0_n,u^1_n)_{n\in\N}$ in ${D(A^{1/2})}\times X$ such that
\begin{align}
& \Vert (u^0_n,u^1_n) \Vert_{{D(A^{1/2})}\times X}=1,\quad\forall n\in\N, \label{ee1} \\
& \Norm{\chi_Q\d_t u_n}{L^2((0,T)\times\Omega)} \xrightarrow[n\rightarrow+\infty]{} 0, \label{ee2}
\end{align}
where $u_n$ is the solution of \eqref{waveEq}-\eqref{condBC} such that
${u_n}\ic=u^0_n$ and $\d_t {u_n}\ic=u^1_n$. From \eqref{ee1}, the
sequence $(u^0_n,u^1_n)_{n\in\N}$ is bounded in ${D(A^{1/2})}\times X$,
and therefore, extracting if necessary a subsequence, it converges to
some $(u^0,u^1)\in {D(A^{1/2})}\times X$ for the weak topology.
Let $u$ be the solution of \eqref{waveEq}-\eqref{condBC} such that
$u\ic=u^0$ and $\d_t u\ic=u^1$. Then,
$\chi_Q\d_t u_n \to \chi_Q\d_t u$ weakly in $L^2((0,T)\times\Omega)$ implying
$$
\Norm{\chi_Q \d_t u}{L^2((0,T)\times\Omega)} \leq
\liminf_{n\rightarrow+\infty} \Norm{\chi_Q\d_t u_n}{L^2((0,T)\times\Omega)} = 0,
$$
and hence $u \in N_T$. It follows from Lemma \ref{lemNT} that
$u=0$. In particular, we have then $(u^0,u^1)=(0,0)$ and hence
$(u^0_n,u^1_n)_{n\in\N}$ converges to $(0,0)$ for the weak topology of
${D(A^{1/2})}\times X$, and thus, by compact embedding, for the strong topology of $X \times D(A^{1/2})'$. Applying the weak observability inequality \eqref{weak_observability} raises a contradiction. This concludes the proof of Theorem \ref{mainthm}.
\hfill \qedsymbol\endproof

\subsection{Proof of Proposition~\ref{prop: equivalence observability}}\label{sec_rem_obs_equiv}
  First, we assume that the observability inequality~\eqref{observability} holds.
  Let $v$ be a solution of \eqref{waveEq}-\eqref{condBC}, with initial
  conditions $(v^0,v^1)\in X\times D(A^{1/2})'$. We set
  $u=\int_0^t v(s) d s  - A^{-1} v^1$. Then $\d_t u=v$ and we have
  $u\ic=u^0=-A^{-1}v^1\in D(A^{1/2})$, and $\d_t u\ic=u^1=v^0\in
  X$. Moreover, we have
$$
\d_t^2 u(t) = \d_t v(t) = \int_0^t \d_t^2 v(s) d s+ \d_tv(0) 
= -  \int_0^t Av (s) d s  + v^1 = -A u(t).
$$
Since $v=\d_tu$ and
$
\Norm{ (u^0,u^1)}{D(A^{1/2})\times X} = \Norm{ (A^{-1}v^1,v^0)}{D(A^{1/2})\times X} = \Norm{ (v^0,v^1)}{X\times D(A^{1/2})'},
$
applying the observability inequality~\eqref{observability} to $u$, we obtain \eqref{observability-shift}.

Second,  we assume that the observability
inequality~\eqref{observability-shift} holds. 
Let $u$ be a solution of \eqref{waveEq}-\eqref{condBC}, with initial
conditions $(u^0,u^1)\in D(A^{1/2})\times X$. We set $v=\d_tu$. Then
$v$ is clearly a solution of \eqref{waveEq}-\eqref{condBC}, with
$v\ic=v^0=u^1\in X$ and $\d_tv
\ic=v^1=\d_t^2u\ic=-Au\ic = - A u^0 \in D(A^{1/2})'$.
Since
$
\Norm{ (v^0,v^1)}{X\times D(A^{1/2})'} = \Norm{ (u^1,Au^0)}{X\times D(A^{1/2})'}
= \Norm{ (u^0,u^1)}{D(A^{1/2})\times X},$
applying  the observability inequality~\eqref{observability-shift}  to $v=\d_tu$, we obtain \eqref{observability}.
\hfill \qedsymbol \endproof

\subsection{Proof of Corollary \ref{cor_stab}}\label{sec_proof_cor_stab}
It is proven in \cite{Haraux} that a second-order linear equation with
(bounded) damping has the exponential energy decay property, if and
only if the corresponding conservative linear equation is observable.
The extension to our framework is straightforward. We however give a
proof of Corollary \ref{cor_stab} for completeness.

By Theorem~\ref{mainthm}, there exists
$C_0>0$ such that 
\begin{equation}\label{ineqobsconservative}
  C_0 \Big( \Vert A^{1/2}\phi\ic\Vert_{L^2(\Omega)}^2 + \Vert\d_t\phi\ic\Vert_{L^2(\Omega)}^2\Big)
  \leq \int_0^S \!\! \Vert \d_t\phi \Vert_{L^2(\omega(t))}^2\,
  dt,\qquad S = \ell T, \ \ell \in \N^\ast, 
\end{equation}
for $\phi$ solution of
$\d_t^2\phi = \triangle_g\phi,
$
with  $(\phi\ic,\d_t\phi\ic)\in D(A^{1/2})\times X$.

Let now $u$ be a solution of \eqref{dampedwaveEq} with $(u\ic,\d_t
u\ic)\in D(A^{1/2})\times X$. Let us prove that we have an exponential decay for
its energy. We consider $\phi$ as above with the following initial conditions $\phi\ic=u\ic$ and $\d_t\phi\ic=\d_t u\ic$. Then, setting $\theta=u-\phi$, we have
\begin{align}
  \label{eq: wave eq theta}
\d_t^2\theta  -  \triangle_g\theta =  \chi_\omega \d_t u, \qquad
  \theta\ic=0, \ \d_t\theta\ic=0.
\end{align}
Observe that $\d_t u \in L^2(\R \times \Omega)$ yielding $\theta \in
\Con^0(\R; D(A^{1/2})) \cap \Con^1(\R; X)$.

Were the \rhs of \eqref{eq: wave eq theta} to be replaced by $f$ in $H^1(\R \times \Omega)$, we would have $\theta \in
\Con^0(\R; D(A)) \cap \Con^1(\R; D(A^{1/2}))$. Recalling the
definition of $E_0$ in the statement of Corollary~\ref{cor_stab}, we
would find
\begin{align*}
  \frac{d}{d t}E_0 (\theta)(t) 
  = \langle\d_t\theta(t),A\theta(t)+\d_t^2 \theta(t)\rangle_{L^2(\Omega)}
  = \langle\d_t\theta(t), f \rangle_{L^2(\Omega)}.
\end{align*}
Continuity with respect to $f$ and a density argument
then yield $\frac{d}{d
  t}E_0 (\theta)(t) = \langle\d_t\theta(t), \chi_\omega \d_t u\rangle_{L^2(\Omega)}$. 
With two integrations with respect to $t \in (0,S)$, using that $E_0
(\theta)(0)=0$, we obtain, by the Fubini theorem,
$
\int_0^S E_0(\theta)(t)\, dt = \int_0^S (S-t)  \int_{\omega(t)} \d_t\theta(t,x) \d_t u(t,x)\, dx_g\, dt .
$
With the Young inequality, we have
$\int_0^S  \!\! E_0(\theta)(t)\, dt
\leq  S^2 \int_0^S\!\! \Vert \d_t u\Vert_{L^2(\omega(t))}^2 \, dt 
+ \frac{1}{4} \int_0^S \!\! \Vert \d_t \theta
\Vert_{L^2(\Omega)}^2\, dt .
$
With the definition of $E_0(\theta)(t)$, we then infer that
\begin{equation}
  \label{ineq_theta_u}
  \int_0^S \!\! \Vert \d_t \theta\Vert_{L^2(\Omega)}^2\, dt
  \leq  4 S^2 \int_0^S\!\!  \Vert \d_t u\Vert_{L^2(\omega(t))}^2 \, dt .
\end{equation}
Now, since $\phi = u-\theta$, we have $\Vert \d_t\phi \Vert_{L^2(\omega(t))}^2 \leq 2\Vert \d_t
u\Vert_{L^2(\omega(t))}^2 + 2\Vert \d_t \theta\Vert_{L^2(\Omega)}^2$,
yielding, 
using \eqref{ineq_theta_u},
\begin{equation}
\label{eq: ineq_theta_u2}
\int_0^S \!\! \Vert \d_t\phi \Vert_{L^2(\omega(t))}^2 \, dt
\leq (2+8S^2)\int_0^S\!\!  \Vert \d_t u\Vert_{L^2(\omega(t))}^2 \, dt
.
\end{equation}
Arguing as above, we have $\frac{d}{dt}E_0(u)(t) = -
\Vert \d_t u(t,x)\Vert_{L^2(\omega(t))}^2$.
Using this property, inequalities \eqref{ineqobsconservative} and \eqref{eq: ineq_theta_u2}, and the fact that $\phi\ic=u\ic$ and
$\d_t\phi\ic=\d_t u\ic$, we deduce that $C_0
E_0(u)(0) = C_0
E_0(\phi)(0) \leq (2+8S^2)  (E_0(u)(0)-E_0(u)(S))$, or rather  $E_0
(u)(S) \leq \big(1 -C_0/ (2+8S^2)\big)
E_0(u)(0)$. For $S$ chosen \suff large, that is, for $\ell \in \N^\ast$
chosen \suff large, we thus have $E_0
(u)(S) \leq \alpha E_0(u)(0)$ with $0 < \alpha < 1$. 

Since $\omega$ is $T$-periodic and thus $S$-periodic, the above
reasoning can be done on any interval $(kS,(k+1)S)$, yielding
$E_0(u)((k+1)S)\leq \alpha E_0(u)(kS)$, for every $k\in\N$.  Hence, we
obtain 
$E_0 (u)(kS)\leq \alpha^k E_0(u)(0) $.

For every $t\in [kS,(k+1)S)$, noting that $k=\left[t/S\right]> t/S-1$,
and that $\log(\alpha)<0$, it follows that
$\alpha^k<\frac{1}{\alpha}\exp(\frac{\ln(\alpha)}{S} t)$ and hence
$E_0(u)(t)\leq E_0(u)(S) \leq \mu\exp(-\nu t) E_0(u)(0)$, for some positive
constants $\mu$ and $\nu$ that are independent of $u$.
\hfill \qedsymbol \endproof

\section{Some examples and counter-examples}\label{sec_examples}
In the forthcoming examples, we shall consider several geometries in
which the observation (or control) domain
$\omega(t)=\{x\in\Omega\ \mid\ (t,x)\in Q\}$ is the rigid displacement in
$\Omega$ of a fixed domain, with velocity $v$. Then the resulting
observability property depends on the value of $v$ with respect to the
wave speed.

In all our examples, in the presence of a boundary we shall consider
Dirichlet boundary conditions. In that case, generalized
bicharacteristics behave as described in Section~\ref{sec: generalized
  bichar}.
We recall that, if parametrized by time $t$, the projections of the
generalized bicharacteristics on the base manifold travel at speed one. 

\subsection{In dimension one}
\label{sec: ex dim 1}
We consider $M=\R$ (Euclidean) and $\Omega=(0,1)$. The rays have a speed equal to $1$.
We set $I=(0,a)$, for some fixed $a\in(0,1)$, and we assume that the control domain $\omega(t)$ is equal to the translation of the interval $I$ with fixed speed $v>0$. 
We have, then, $\omega(t)=(vt,vt+a)$ as long as $t\in(0,(1-a)/v)$.
When $\omega(t)$ touches the boundary, we assume that it is ``reflected" after a time-delay $\delta\geq 0$, according to the following rule:
if $(1-a)/v\leq t\leq (1-a)/v+\delta$ then $\omega(t) = (1-a,1)$. For larger times $t\geq (1-a)/v+\delta$ (and before the second reflection), the set $\omega(t)$ moves in the opposite direction with the same speed (see Figure \ref{fig1d}).

\begin{figure}[h]
\begin{center}
\subfigure[Case $v <1$ and $\delta\geq 0$. \label{fig1d-a}]
{\resizebox{4cm}{!}{\input ./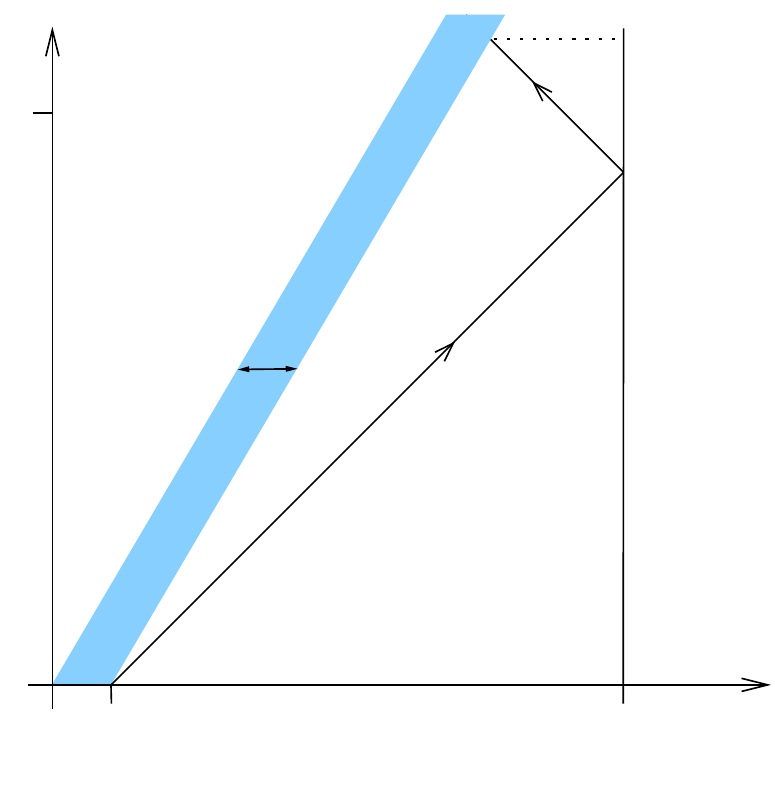_t}}\qquad \qquad
\subfigure[Case $v =1$ and $\delta>0$. \label{fig1d-d}]
{\resizebox{4cm}{!}{\input ./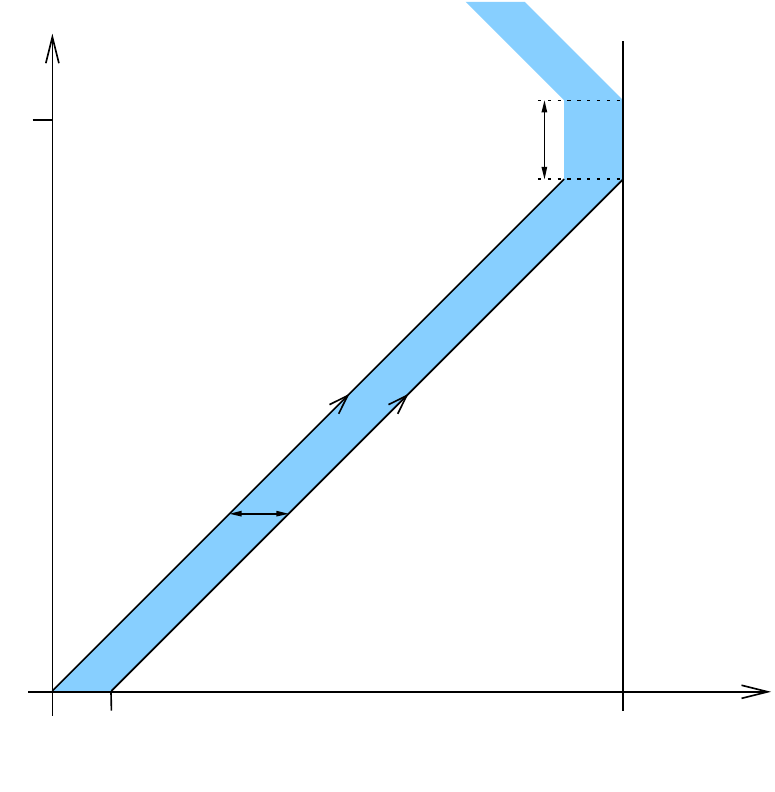_t}}
\\
\subfigure[Case $v \geq 1$ and $\delta=0$. \label{fig1d-c}]
{\resizebox{4cm}{!}{\input ./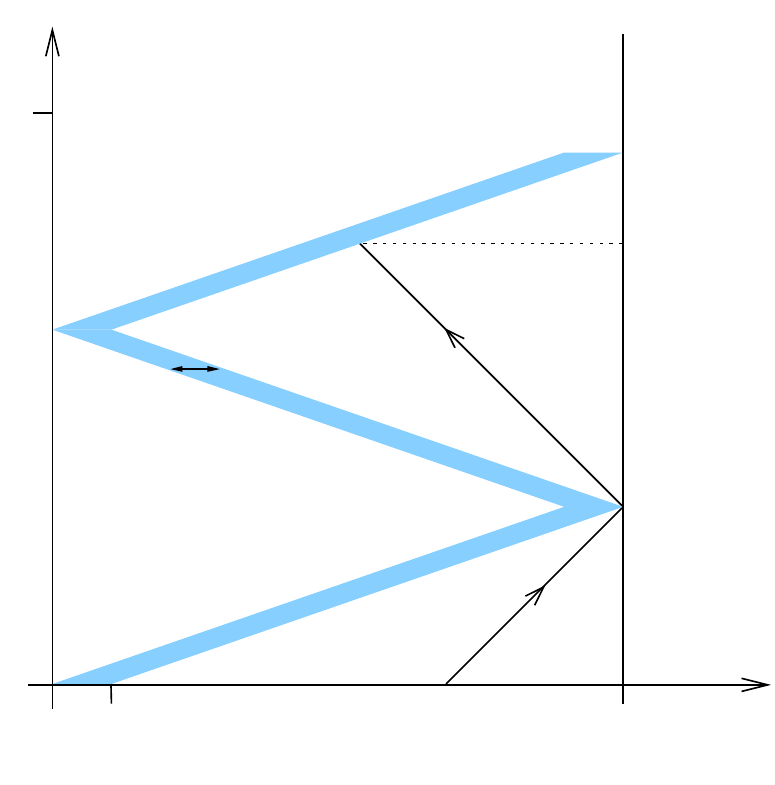_t}}\qquad \qquad
\subfigure[Case $v >1$ and $\delta>0$. \label{fig1d-b}]
{\resizebox{4cm}{!}{\input ./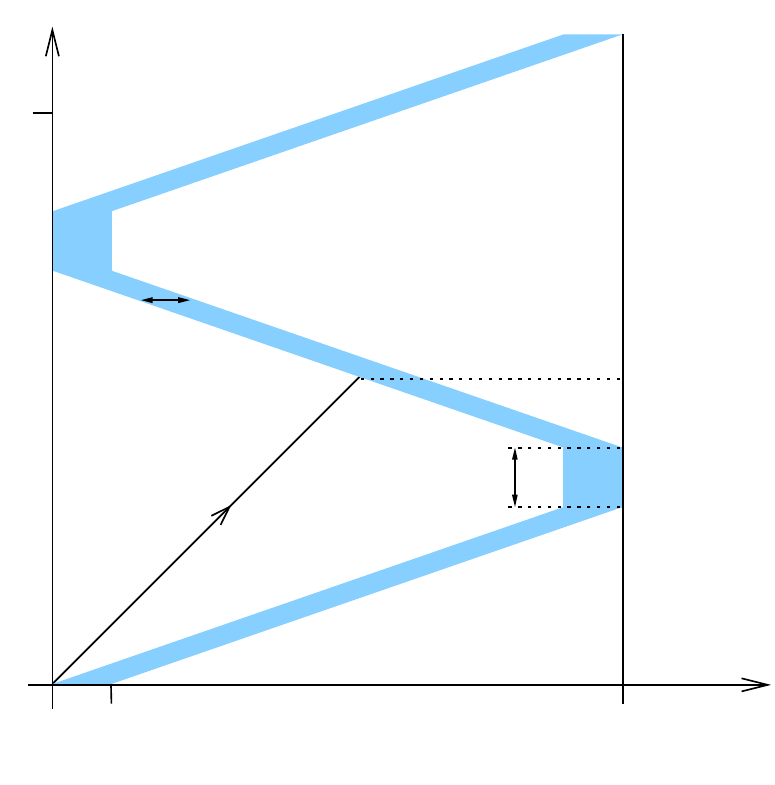_t}}
\end{center}
\caption{Time-varying domains in dimension one.}\label{fig1d}
\end{figure}

This simple example is of interest as it exhibits that the control
time depends on the value of the velocity $v$ with respect to the wave
speed (which is equal to $1$ here). We denote by  $T_0(v,a,\delta)$ the control time.
With simple computations (see also Figure~\ref{fig1d}), we establish that
\begin{equation*}
T_0(v,a,\delta) = \begin{cases}
2(1-a)/(1+v) & \textrm{if}\ 0\leq v < 1\ \text{and}\
\delta\geq 0 ,\\
1-a & \text{if}\  v=1 \ \text{and}\ \delta>0,\\
(1-a)(3v+1)/ \big(v(1+v)\big) & \text{if}\ v\geq 1\ \text{and}\ \delta=0,\\
\big( 2(1-a)+v\delta\big) (1+v) & \text{if}\ v> 1\ \text{and}\ \delta>0.
\end{cases}
\end{equation*}

\begin{remark}
\label{rem_T0_notcont}
Note that the control time $T_0(v,a,\delta)$ is discontinuous in $v$
and $\delta$. The control time is not continuous with respect to the
domain $Q$ as already mentioned in Remark~\ref{rem: t-GCC}.
\end{remark}

\subsection{A moving domain on a sphere}
\label{sec: ex sphere}
Let $M=\Omega=S^2$, the unit sphere of $\R^3$, be endowed with the
metric induced by the Euclidean metric of $\R^3$.  Let us consider
spherical coordinates $(\theta,\varphi)$ on $M$, in which $\varphi=0$
represents the horizontal plane (latitude zero), and $\theta$ is the
angle describing the longitude along the equator. Let $a\in(0,2\pi)$
and $\eps\in(0,\pi/2)$ be arbitrary.  For $v>0$, we set
$$
\omega(t) = \{ (\theta,\varphi)\ \ \mid\ \vert\varphi\vert<\eps,\ vt<\theta<vt+a\},
$$
for every $t\in\R$. The set $\omega(t)$ is a spherical square drawn on
the unit sphere, with angular length equal to $2\eps$ in
latitude, and $a$ in longitude, and moving along the equator with
speed equal to $v$ (see Figure \ref{figsphere}). We denote by
$T_0(v,a,\eps)$ the control time as defined in Section~\ref{sec: t gcc}. 

\begin{figure}[h]
\begin{center}
\resizebox{6cm}{!}{\input ./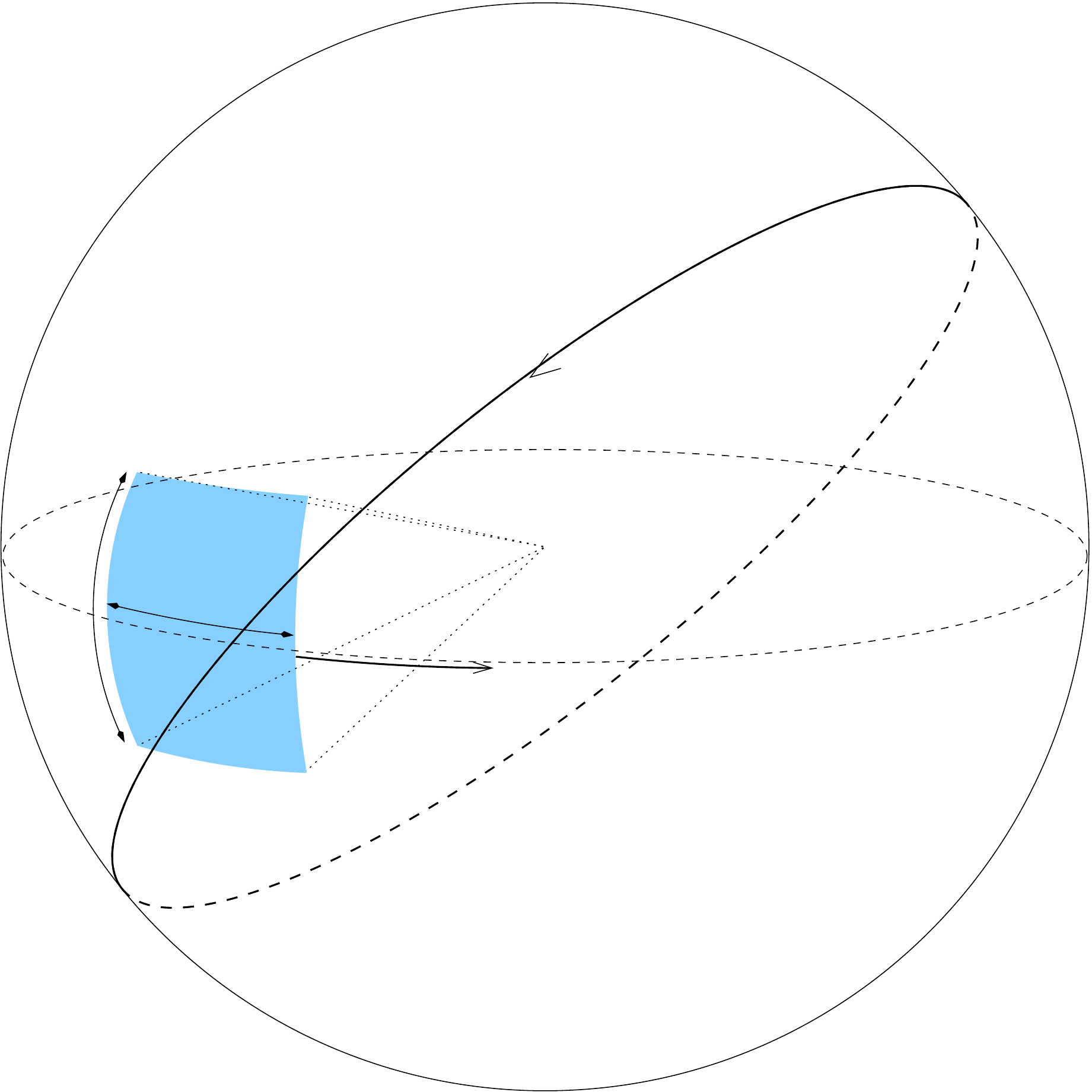_t}
\end{center}
\caption{Time-varying domain on the unit sphere and a typical ray (great circle).}\label{figsphere}
\end{figure}

For this example, an important fact is the following: every (geodesic)
ray on the sphere propagates at speed one along a great circle, with
half-period $\pi$. We thus have a simple description of all possible rays.
Note that, as the radius is one, the speed coincides with the angular speed.

\begin{proposition}\label{prop_sphere}
Let $a\in(0,2\pi)$ and $\eps\in(0,1)$ be arbitrary. Then $T_0(v,a,\eps)<+\infty$ except for a finite number of critical speeds $v>0$. Moreover:
\begin{itemize}
\item $T_0(v,a,\eps)\sim\frac{\pi-a}{v}$ as $v\rightarrow 0$;
\item If $v > v_1= (2 \pi - a + 2 \eps) / (2\eps)$ then $T_0(v,a,\eps) < \infty$. If $v\rightarrow+\infty$ then $T_0(v,a,\eps)\rightarrow \pi-2\eps$.
\end{itemize}
Besides, if $v\in\Q$, then there exist $a_0>0$ and $\eps_0>0$ such that $T_0(v,a,\eps)=+\infty$ for every $a\in(0,a_0)$ and every $\eps\in(0,\eps_0)$.
\end{proposition}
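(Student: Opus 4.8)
The plan is to translate the $t$-GCC of Definition~\ref{def_tGCC} into a purely dynamical statement about great circles and then analyze, regime by regime in $v$, whether every ray is captured by the moving square $\omega(t)$. Since $\Omega=S^2$ has no boundary, the generalized bicharacteristics are the lifts of the unit-speed geodesics, so a ray is simply a great circle traversed at speed one; I parametrize it by its inclination $i\in[0,\pi/2]$ to the equator, the longitude $\theta_a$ of its ascending node, and a temporal phase $t_0$. Two facts drive everything. First, a ray of inclination $i>0$ lies in the latitude band $\{|\varphi|<\eps\}$ only on short arcs around its two antipodal equator crossings, and at the exact crossings it sits at $\varphi=0$, at longitude $\theta_a$ (resp. $\theta_a+\pi$), at the times $t_0+2\pi k$ (resp. $t_0+\pi+2\pi k$), $k\in\Z$; the longitudinal width of these arcs decreases with $i$ and shrinks to a point as $i\to\pi/2$. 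Second, I will use the reduction that $T_0(v,a,\eps)<+\infty$ \emph{if and only if every ray enters $Q$ at some positive time}: the entry time $\tau(\gamma)=\inf\{t>0:(t,x(t))\in Q\}$ is upper semicontinuous on the compact cosphere bundle (because $Q$ is open and the flow is continuous), hence bounded as soon as it is everywhere finite. So it suffices to decide, for each $\gamma$, whether it is ever captured.

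\emph{Dichotomy in $v$ and finiteness away from finitely many speeds.} Fix $i>0$ and look at the ascending crossings: the ray is captured at the crossing of time $t_0+2\pi k$ as soon as $v(t_0+2\pi k)\bmod 2\pi$ falls in $(\theta_a-a,\theta_a)$. If $v$ is irrational, $\{2\pi v k\bmod 2\pi\}_{k\ge1}$ is equidistributed by Weyl's theorem and therefore meets this interval of length $a$; the equatorial rays ($i=0$) are captured directly because their longitude $\theta_0\pm t$ satisfies $\theta_0+(\pm1-v)t\in(0,a)$ for some $t>0$ once $v\neq1$. Thus every irrational $v$ gives $T_0<+\infty$. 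If $v=p/q$ in lowest terms, the crossing values form the $2\pi/q$-spaced finite set $vt_0+\frac{2\pi}q\{0,\dots,q-1\}$; when $2\pi/q<a$ this set meets every interval of length $a$, so again every ray with $i>0$ is captured, and the equatorial rays are captured provided $v\neq1$. Consequently a speed can be critical (i.e.\ $T_0=+\infty$) only if it is rational with denominator $q\le 2\pi/a$; together with the bound $v\le v_1$ from the large-$v$ analysis below, the critical speeds lie in the finite set $\{p/q\le v_1:\ q\le 2\pi/a\}$, proving the first assertion.

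\emph{Rational counterexample and the regime $v\to0$.} For the counterexample I keep $v=p/q$ fixed and use polar rays ($i=\pi/2$), for which the only capture opportunities are the near-crossing windows of temporal length $2\eps$, during which the longitude stays within an $O(\eps^2)$ neighborhood of the node. A short computation turns ``never captured at either node'' into the requirement that $\theta_a$ avoid a finite union of longitude intervals of total measure at most $2q\,(a+2v\eps)+O(\eps^2)$; if $2q\,(a_0+2v\eps_0)<2\pi$ this union does not cover the circle, so an uncaptured polar ray exists and $T_0=+\infty$, which gives the last assertion. For $v\to0$ I pass to the co-rotating frame $\theta'=\theta-vt$, in which $\omega$ is the fixed square $(0,a)\times(-\eps,\eps)$ and each node drifts at speed $-v$. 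Since the longitudinal band-window is narrowest for the polar rays (it degenerates to the node itself), these are captured last; their two antipodal nodes are caught once the first of them drifts into $(0,a)$, and optimizing the placement (one node tending to $\pi^-$, the other to $(2\pi)^-$) makes the worst entry time equal to $(\pi-a)/v+O(1)$, using that for $v$ small the dwell $a/v$ far exceeds the crossing period $2\pi$. Hence $T_0\sim(\pi-a)/v$ as $v\to0$ when $a<\pi$ (for $a\ge\pi$ the fixed square already satisfies the usual GCC and $T_0$ stays bounded, so the asymptotic is understood in the interesting range $a<\pi$).

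\emph{The regime $v\to+\infty$.} A fixed longitude is covered by the moving window on time intervals of length $a/v$ separated by gaps $(2\pi-a)/v$. Once $v>v_1=(2\pi-a+2\eps)/(2\eps)$ this gap is smaller than the crossing window $2\eps$ of a polar ray, so for \emph{every} ray each equator crossing overlaps a covering interval and the ray is captured at its first crossing; this both supplies the bound $v\le v_1$ used above and yields $T_0<+\infty$ for all large $v$. As $v\to+\infty$ the gap tends to $0$, so $T_0$ converges to the supremum over rays of the first-crossing time, attained again by the polar rays: their band-presence intervals have length $2\eps$ and are spaced by the half-period $\pi$, so the worst phase gives first entry at $\pi-2\eps$, whence $T_0\to\pi-2\eps$. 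The main difficulty throughout is the uniformity over all inclinations — proving that the polar rays are genuinely the extremal ones in both the $v\to0$ and $v\to+\infty$ limits — together with the arithmetic bookkeeping in the rational case, where one must control precisely when the finite set of crossing positions can be dodged by a suitable choice of node longitude.
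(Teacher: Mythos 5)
Your proposal is correct and follows essentially the same strategy as the paper's proof: the same regime analysis in $v$ with the same key quantities (the threshold $v_1=(2\pi-a+2\eps)/(2\eps)$ from the $2\eps$ dwell time in the band, the sweep/dwell argument giving $T_0\sim(\pi-a)/v$, equidistribution versus density for irrational $v$, and the $2\pi/q$-spaced crossing positions for $v=p/q$), capped by the compactness argument that the paper invokes implicitly and you make explicit via upper semicontinuity of the entry time. The only cosmetic differences are that you run the rational counterexample on polar rays with a covering-measure bound, thereby bypassing the paper's regular-polygon Lemma~\ref{lem_regpolyg}, and that you note the implicit restriction $a<\pi$ in the small-$v$ asymptotic, which the paper leaves unstated.
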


Obtaining an analytic expression of $T_0$ as a function of $(v,a,\eps)$ seems to be very difficult.

Note that the asymptotics above still make sense if either $a>0$ or
$\eps>0$ are small. This shows that we can realize the observability property
with a subset of arbitrary small  Lebesgue measure (compare with Corollary
\ref{cor_eco}).

\begin{proof}[Proof of Proposition \ref{prop_sphere}.]
  First, we observe the following. Consider a ray propagating along
  the equator (with angular speed $1$), in the same direction as
  $\omega(t)$.  If $v=1$, depending on its initial condition, this ray
  either never enters $\omega(t)$ or remains in it for all time.
  Hence, $T_0(v,a,\eps)=+\infty$. In contrast, if $v\neq 1$,
  then, such a ray enters $\omega(t)$ for a time
  $0\leq t < (2\pi)/\vert v-1\vert$, as
  $(2\pi)/\vert v-1\vert > (2\pi-a)/\vert v-1\vert$.

Second, we treat the cases $v$ large and $v$ small, and we compute the
asymptotics of $T_0(v,a,\eps)$ (in the argument, both $a$ and $\eps$ are
kept fixed). 
\begin{description}
\item[Case  $\bld{v}$ small.] If $2 \pi v < a$, then every ray goes
  full circle in a time shorter than that it takes for the domain to
  travel the distance $a$. It is then  clear that  every ray will have
  met $\omega(t)$ as soon as $\omega(t)$ has travelled halfway along
  the equator (up to the thickness of $\omega(t)$ and the travel time
  of the ray itself).
  In other words, we then have  $\frac{\pi-a}{v} \leq T_0(v,a,\eps)
  \leq \frac{\pi-a}{v} + 2 \pi$ for $v < v_0 = a/(2\pi)$. 

\item[Case $\bld{v}$ large.]
If $v$ grows to infinity, then the situation becomes intuitively as if
we have a static control domain forming  a strip of constant width
$\eps>0$ around the equator. For such a strip,  the control
time is $\pi-2\eps$. More precisely,  let us assume $(2\pi-a+2\eps) /v < 2 \eps$.
Every ray entering the region $\{ |\varphi| < \eps\}$, spends a time
at least equal to $2 \eps$ in this region. 
At worst, the control domain will have to travel the distance $2 \pi -a
+ 2 \eps$ to ``catch'' this ray (going full circle and more than the
longitudinal distance travelled by the ray itself).   The condition $v >v_1 = 
(2\pi-a+2\eps)/ (2 \eps)$ thus implies that all rays enter  the moving open domain
$\omega(t)$ within time $\pi - 2 \eps + 2(\pi+\eps) /v$. Hence,
$\pi - 2 \eps\leq T_0(v,a,\eps) \leq \pi - 2 \eps + 2(\pi+\eps) /v$. 
\end{description}

Third, we consider the case $v_0 \leq v \leq v_1$.  To get some
intuition, we consider, in a first step, that $a$ and $\eps$ are
both very small, and thus consider $\omega(t)$ as point moving along
the equator.  According to the first observation we made above, let us
consider a ray propagating along a great circle that is transversal to
the equator. It meets the equator at times $t_k = t_0 + k \pi$,
$k \in \Z$, for some $t_0$.  If $v$ is irrational then the set of
positions of the ``points'' $\omega(t_k)$, given by
$x(t_k)=\cos(vt_k)$ and $y(t)=\sin(vt_k)$ in the plane $(x,y)$
containing the equator, is dense in the equator. Adding some thickness
to $\omega(t)$, that is having $a>0$ and $\eps>0$, we find that every
ray encounters the moving open set $\omega(t)$ in a finite time if $v$
is irrational. By a compactness argument we then obtain
$T_0(v,a,\eps)< \infty$ if $v$ is irrational.

Fourth, considering again that $a>0$ and $\eps>0$ are both very small,
we shall now see that there do exist rays, transversal to the equator,
that never meet the moving ``point'' $\omega(t)$ whenever $v\in\Q$.
Writing $v=p/q$ with $p$ and $q$ positive integers, the set of points
reached by $(\cos(vt_k),\sin(vt_k))$ at times $t_k = t_0 + k\pi$, with
$k\in\Z$, is finite. The following
lemma yields a more precise statement.
\begin{lemma}\label{lem_regpolyg}
Let $p$ and $q$ be two coprime integers. We have
\begin{equation*}
\Big\{ \frac{kp}{q}\pi\mod 2\pi \ \ \mid\  k=1,\ldots,2q\Big\}
=
\begin{cases}
\displaystyle\Big\{ \frac{k}{q}\pi\ \ \mid\  k=1,\dots,2q\Big\} & \textrm{if $p$ is odd}, \\
\displaystyle\Big\{ 2\frac{k}{q}\pi\ \ \mid\  k=1,\dots,q\Big\} & \textrm{if $p$ is even (and $q$ odd)}.
\end{cases}
\end{equation*}
\end{lemma}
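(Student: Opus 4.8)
The plan is to reinterpret the angles modulo $2\pi$ as elements of a cyclic group and to identify the left-hand set as a subgroup. First I would note that, modulo $2\pi$, the angle $\frac{kp}{q}\pi$ depends only on $kp$ modulo $2q$, since $\frac{kp}{q}\pi = kp\cdot\frac{\pi}{q}$ and $2q\cdot\frac{\pi}{q} = 2\pi$. More precisely, the rescaling $\theta \mapsto \frac{q}{\pi}\theta$ identifies the cyclic subgroup $\frac{\pi}{q}\Z / 2\pi\Z$ of $\R/2\pi\Z$ (generated by $\frac{\pi}{q}$, of order $2q$) with $\Z/2q\Z$, and under this identification the left-hand set becomes $\{kp \bmod 2q \mid k = 1,\dots,2q\}$. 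As $k$ runs over a full system of residues modulo $2q$, this is exactly the image of the multiplication-by-$p$ endomorphism of $\Z/2q\Z$, that is, the cyclic subgroup $\langle p\rangle$ generated by $p$. This subgroup equals $d\,\Z/2q\Z$, the set of multiples of $d = \gcd(p,2q)$, and has $2q/d$ elements.

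The heart of the argument is then the elementary computation of $d = \gcd(p,2q)$ under the hypothesis $\gcd(p,q)=1$, which I would carry out by splitting on the parity of $p$. If $p$ is odd, any prime dividing $d$ divides both $p$ and $2q$; it cannot be $2$, as $p$ is odd, so it is an odd prime dividing $2q$, hence dividing $q$, contradicting $\gcd(p,q)=1$. Thus $d=1$ and $\langle p\rangle = \Z/2q\Z$. If $p$ is even, then $q$ must be odd (otherwise $2\mid\gcd(p,q)$), so the $2$-adic valuation of $2q$ is exactly $1$; since $v_2(p)\geq 1$ we get $v_2(d)=1$, while the odd part of $d$ divides $\gcd(p,q)=1$. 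Hence $d=2$ and $\langle p\rangle = 2\,\Z/2q\Z$ is the index-two subgroup of even residues.

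Finally I would translate these two subgroups back through the rescaling. When $p$ is odd, $\langle p\rangle = \Z/2q\Z$ corresponds to all $2q$ multiples $\{\frac{k}{q}\pi \mid k=1,\dots,2q\}$ modulo $2\pi$. When $p$ is even, $\langle p\rangle = \{0,2,\dots,2q-2\}$ corresponds to $\{\frac{2k}{q}\pi \mid k=1,\dots,q\}$ modulo $2\pi$ (the value $k=q$ giving $2\pi\equiv 0$). This yields precisely the two cases in the statement.

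I expect no serious obstacle here: the only points requiring a little care are the bookkeeping of the index ranges $1,\dots,2q$ versus $0,\dots,2q-1$ (which agree modulo $2\pi$, since $k=2q$ contributes the identity element $0$), and the verification that multiplication by $p$ has image exactly $\langle p\rangle$ rather than a proper subset, which holds because $k$ ranges over all of $\Z/2q\Z$. The genuine mathematical content reduces to the single \textbf{gcd computation} $\gcd(p,2q)\in\{1,2\}$ dictated by the parity of $p$, so I would present that step most explicitly.
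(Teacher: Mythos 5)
Your proof is correct, and it takes a genuinely different (more structural) route than the paper, though both rest on the same arithmetic kernel. The paper first observes that the left-hand set equals the full orbit $\{kp\pi/q \bmod 2\pi \ \mid\ k\in\Z\}$ and then reduces the lemma to two explicit diophantine solvability statements: for $p$ even, $2k'=kp+2mq$ has a solution $(k,m)\in\Z^2$ for every $k'\in\{1,\dots,q\}$, and for $p$ odd, $k'=kp+2mq$ has a solution for every $k'\in\{1,\dots,2q\}$. These are solved by hand from a B\'ezout relation $ap+bq=1$, with the small trick, for odd $p$, of replacing $b$ by $b-pn$ to make it even, so that $k'=k'ap+2k'b'q$. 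This explicitly establishes only the nontrivial inclusion (every element of the claimed set is attained); the reverse inclusion is immediate, since $kp\pi/q$ is trivially an integer multiple of $\pi/q$, \resp of $2\pi/q$ when $p$ is even. Your version instead rescales by $q/\pi$ to identify the left-hand set with the image of the multiplication-by-$p$ endomorphism of $\Z/2q\Z$, \ie, the subgroup $d\,\Z/2q\Z$ with $d=\gcd(p,2q)$, and then computes $d\in\{1,2\}$ by the parity/valuation argument. The two arguments are equivalent in substance --- the paper's parity adjustment of the B\'ezout coefficient is precisely a hands-on verification that $1\in p\Z+2q\Z$ when $p$ is odd, \ie, that $\gcd(p,2q)=1$ --- but your packaging delivers the set \emph{equality} (both inclusions) and the cardinalities $2q$, \resp $q$, in one stroke from the subgroup identification, whereas the paper's computation is self-contained and avoids invoking the structure of cyclic groups. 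Your attention to the endpoint bookkeeping ($k=2q$, \resp $k=q$, contributing $2\pi\equiv 0$) correctly resolves the only representative-choice subtlety in the statement.
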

Thus, if $v=p/q$, with $p$ and $q$ coprime integers, the points
$(\cos(vt_k),\sin(vt_k))$ form the vertices of a regular polygon in
the disk. There are exactly $2q$ (\resp $q$) such vertices if $p$ is
odd (\resp even). In this situation, it is always possible to find a
ray transversal to the equator that never meets this set of vertices.
Now, this phenomenon persists in the case $a>0$ and $\eps>0$ if both
are chosen \suff small. We have thus proven that, given
$v\in\Q \cap [v_0, v_1]$, there exist $0< a_0< 2 \pi$ and
$0 < \eps_0< \pi/2$ such that $T_0(v,a,\eps)=+\infty$ for all
$a\in(0,a_0)$ and $\eps\in(0,\eps_0)$. Note also that if $a> 2\pi/q$
and $\eps>0$ then every ray meets $\omega(t)$ in some finite time.  By
a compactness argument we then obtain $T_0(v,a,\eps)< \infty$. From
that last observation, we infer that, given $a>0$ and $\eps>0$ fixed,
the set of rational velocities $v\in (v_0,v_1)\cap\Q$ for which
$T_0(v,a,\eps)=+\infty$ is finite.
\end{proof}

\begin{proof}[Proof of Lemma \ref{lem_regpolyg}.]
We note that 
$\big\{ kp \pi / q \mod 2\pi\ \ \mid\   k=1,\ldots,2q \big\} = \big\{  kp
\pi / {q} \mod 2\pi\ \ \mid\   k\in\Z\big\}$.
It thus suffices to prove the following two statements:
\begin{enumerate}

\item for $p$ even: $\forall k'\in \{1,\ldots,q\}, \ \exists k\in\Z, \
  \exists m\in\Z$ such that $2k'=kp+2mq$.
\item for $p$ odd: $\forall k'\in \{1,\ldots,2q\}, \ \exists
  k\in\Z, \ \exists m\in\Z$ such that $k'=kp+2mq$.
\end{enumerate}
Since $p$ and $q$ are coprime, there exists $(a,b)\in\Z^2$ such that
$ap+bq=1$. Moreover, if $(a,b)$ is a solution of that diophantine
equation, then all other solutions are given by $(a+qn,b-pn)$, with
$n\in\Z$. Multiplying by $2k'$, we infer that $2k'=2k'ap+2k'bq$, and the first 
statement above follows. For the second statement, we note that, if
$p$ is odd, then, changing $b$ into $b-pn$ if necessary, we may assume
that $b$ is even, say $b = 2 b'$. Then, multiplying by $k'$, we infer
that $k'=k'ap+2k'b'q$, and the second statement follows.
\end{proof}
Before moving on to the next example, we stress again that the
peculiarity of the present example (unit sphere) is that all
 rays are periodic, with the same period $2 \pi$. The study
of other Zoll
manifolds would be of interest. The
situation turns out to be drastically different in the case of a disk,
due to ``secular effects" implying a precession phenomenon, as we are
now going to describe.

\subsection{A moving domain near the boundary of the unit disk}\label{sec_disk}
Let $M=\R^2$ (Euclidean) and let
$\Omega=\{ (x,y)\in\R^2\ \mid\ x^2+y^2< 1\}$ be the unit disk.  Let
$a\in(0,2\pi)$ and $\eps\in(0,1)$ be arbitrary. We set, in polar
coordinates,
$$
\omega(t) = \{ (r,\theta)\in [0,1]\times\R\ \ \mid\  1-\eps< r < 1,\ vt<\theta<vt+a \},
$$
for every $t\in \R$. The time-dependent set $\omega(t)$ moves at constant angular speed $v$, anticlockwise, along the boundary of the disk (see Figure \ref{fig: moving domain on the disk}). Its radial length is $\eps$ and its angular length is $a$.

\begin{figure}[h]
\begin{center}
\resizebox{7cm}{!}{\input ./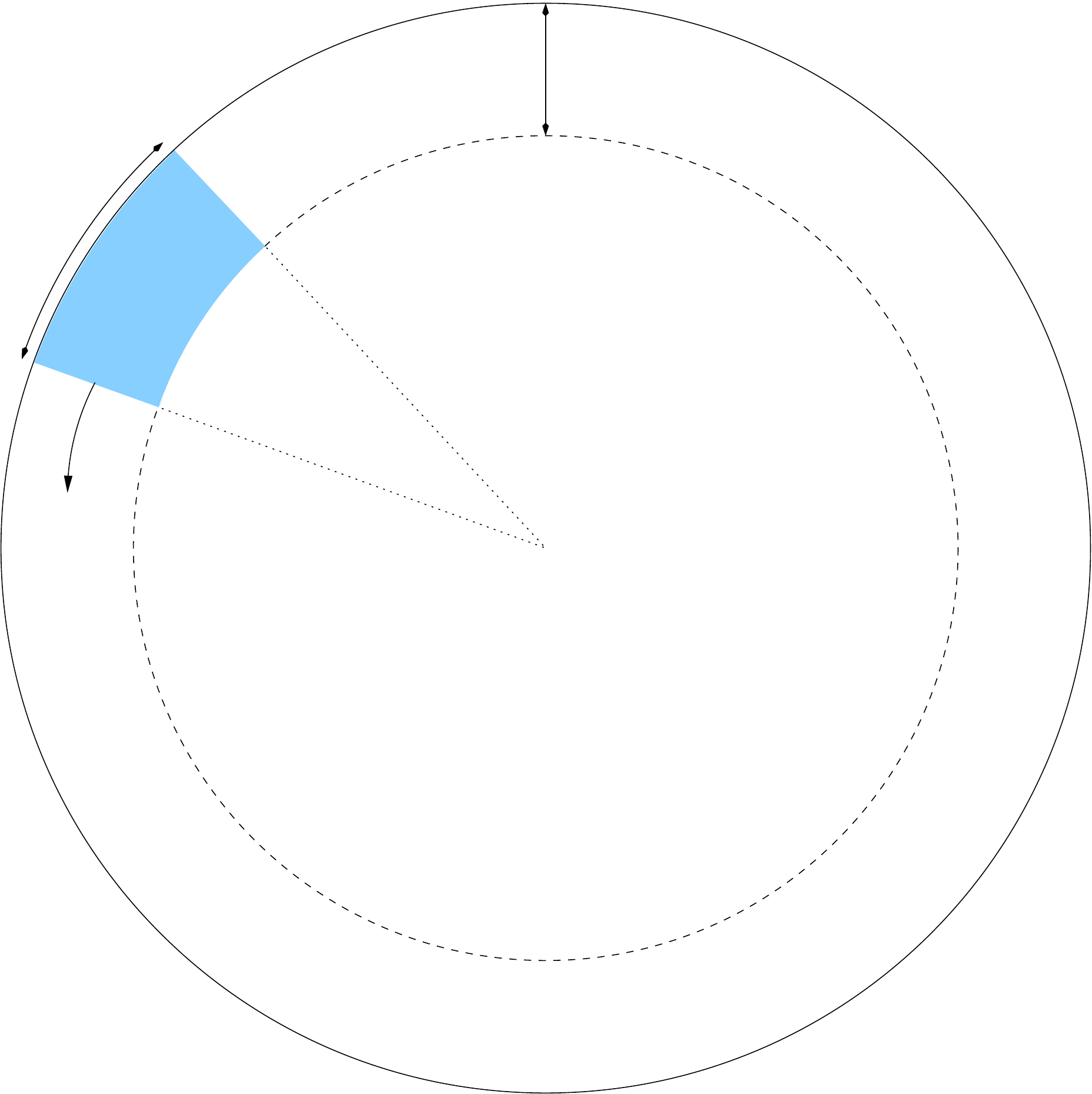_t}
\end{center}
\caption{A time-varying domain on the unit disk}
\label{fig: moving domain on the disk}
\end{figure}

\begin{proposition}\label{prop_disk}
The following properties hold:
\begin{enumerate}
\item Let $a\in(0,2\pi)$ and $\eps\in(0,1)$ be arbitrary. We have  $T_0(v,a,\eps)<+\infty$, for every $v>  v_0 = (2 \pi +
2\eps-a)/ (2 \eps)$, and we have $T_0(v,a,\eps)\sim 2-2\eps$ as $v\rightarrow
  +\infty$.
\item If there exists $n\in\N\setminus\{0,1\}$ such that
  $v\sin\frac{\pi}{n}\in\pi\Q$, then there exist $a_0\in (0, 2\pi)$ and $\eps_0\in(0,1)$ such that $T_0(v,a,\eps)=+\infty$ for all $a\in(0,a_0)$ and $\eps\in(0,\eps_0)$.

\item For every $v\geq 1$, for every $a\in(0,2\pi)$, there exists $\eps_0>0$ such that $T_0(v,a,\eps)=+\infty$, for every $\eps\in(0,\eps_0)$.
\item For every $v\geq 0$ and $a \in(0,\pi)$, there exists $\eps_0\in (0,1)$ such that $T_0(v,a,\eps)=+\infty$, for every $\eps\in(0,\eps_0)$.
\end{enumerate}
\end{proposition}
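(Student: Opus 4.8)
The plan is to reduce the whole statement to the elementary billiard dynamics in the disk and to an arithmetic condition on a circle return map. Parametrized by arclength at unit speed, a ray is a chord that reflects on $\d\Omega$ with the angle $\psi\in(0,\pi/2]$ between the chord and the boundary conserved; successive impact points are separated by the central angle $2\psi$, the travel time between them is $2\sin\psi$, and the chord stays at distance $\cos\psi$ from the center. Hence, as soon as $\cos\psi<1-\eps$, the ray meets the annulus $\{1-\eps<r<1\}$ only in short windows around its impacts, whereas for $\cos\psi\geq 1-\eps$ (whispering-gallery rays) it stays in the annulus for all time, with mean angular speed $\psi/\sin\psi\in(1,\pi/2]$; the limiting gliding ray travels along $\d\Omega$ with angular speed $1$. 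Writing the $k$-th impact at angle $\alpha_0+2k\psi$ and time $t_0+2k\sin\psi$, and the observation sector as $(vt,vt+a)$, one checks that, as $\eps\to 0$, the ray is observed iff the arithmetic sequence $\beta_k=\beta_0+2k(\psi-v\sin\psi)\ (\mathrm{mod}\ 2\pi)$ (or $\beta_0-2k(\psi+v\sin\psi)$ if the ray runs the other way) meets the arc $(0,a)$ for some $k$. This residue condition is the core of the argument: the sequence is equidistributed, hence the ray is observed, unless $(\psi\mp v\sin\psi)/\pi$ is rational, in which case $\{\beta_k\}$ is finite.

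For item (1) I would show that a sufficiently fast sweep observes every ray. Every ray bounces infinitely often and spends, near each bounce, a time at least $\simeq 2\eps$ in the annulus --- the minimum $2\eps$ being reached at a normal (diametral) bounce, where the radial speed is maximal. Since the moving sector revisits a fixed angle with gaps $(2\pi-a)/v$, it suffices that this gap be shorter than the shortest annular window; accounting for the sector width and its displacement during one window yields precisely $v_0=(2\pi+2\eps-a)/(2\eps)$, hence $T_0<+\infty$ for $v>v_0$. For the asymptotics I would compare with the static annulus $\{1-\eps<r<1\}$, whose control time is $2-2\eps$: a diameter that has just left the annulus needs time $2(1-\eps)$ to reach the antipodal annular region, which gives $T_0\geq 2-2\eps$ for every $v$; for $v\to+\infty$ the sector covers the relevant impact angle within an extra $O(1/v)$, giving the matching upper bound $T_0\to 2-2\eps$.

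For items (2)--(4) the strategy is to exhibit one trapped ray, i.e. one whose residue set $\{\beta_k\}$ avoids $\ovl{(0,a)}$; once $\eps$ is small enough that this ray dips below the annulus it is never observed, so $T_0=+\infty$. The number of residues that must be fitted into the complement of $(0,a)$ dictates the construction. For item (2) I take $\psi=\pi/n$, a closed regular $n$-gon; the hypothesis $v\sin(\pi/n)\in\pi\Q$ makes $(\psi-v\sin\psi)/\pi$ rational, so the orbit is periodic in the frame co-rotating with the sector and its impacts occupy finitely many residues, which a small arc avoids: choosing $a_0$ below the least gap and $\eps_0<1-\cos(\pi/n)$ traps it. Item (3), with $a<2\pi$ arbitrary, needs a single residue, i.e. $\psi\mp v\sin\psi\in\pi\Z$: the equation $\psi-v\sin\psi=0$ is solvable for $1<v\leq\pi/2$ (a co-rotating ray) and $\psi+v\sin\psi=\pi$ for $v\geq\pi/2$ (a counter-rotating ray), while $v=1$ is handled by the grazing/gliding rays, whose mean angular speed tends to $v=1$ so that their observation times are unbounded; placing the residue in the nonempty arc $[a,2\pi]$ traps the ray. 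Item (4), with $a<\pi$ and any $v\geq 0$, needs only two antipodal residues, so I would solve $\psi+v\sin\psi=\pi/2$, which has a solution $\psi\in(0,\pi/2]$ for every $v\geq 0$, and place $\{\beta_0,\beta_0+\pi\}$ outside the arc of length $a<\pi$.

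The main obstacle is the uniform handling of the residue condition over the whole parameter range. In items (3) and (4) one must produce, for every admissible $v$, an orbit with the prescribed small number of residues, which forces the split between co- and counter-rotating orbits and the separate treatment of $v=1$; and one must verify that the idealized $\eps\to 0$ picture is stable, namely that for small $\eps$ the true annular windows (of angular width vanishing with $\eps$, together with the sector's motion during a window) still miss $(0,a)$. The same careful bookkeeping is what makes the threshold $v_0$ of item (1) sharp. The purely dynamical input --- conservation of $\psi$, equidistribution of $\{\beta_k\}$ when the rotation number is irrational and its finiteness when it is rational --- is classical and poses no difficulty.
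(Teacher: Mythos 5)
Your proposal is correct and follows essentially the same route as the paper: your item (1) catching argument, with the minimal annular window $2\eps$ attained at diametral bounces and the threshold $v_0=(2\pi+2\eps-a)/(2\eps)$, is exactly the paper's, and your residue equations $\psi-v\sin\psi=0$, $\psi+v\sin\psi=\pi$ and $\psi+v\sin\psi=\pi/2$ coincide with the paper's precession-speed conditions $v=w_P(\alpha)=\alpha/\bigl(2\sin(\alpha/2)\bigr)$ for items (2)--(3) and $v=(\alpha-\pi)/\bigl(2\sin(\alpha/2)\bigr)$ for item (4) under the reparametrization $\alpha=2\psi$, \resp $\alpha=2\pi-2\psi$. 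The only cosmetic difference is that you phrase the trapped-ray constructions through a rotation number in the frame co-rotating with the sector (adding an equidistribution remark the proposition does not require), while the paper tracks the precessing impact points $P_k$ against the moving arc; the gliding-ray treatment of $v=1$ and the stability of the construction for small $a$ and $\eps$ are handled at the same level of rigor in both.
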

As for the case of the sphere presented in Section~\ref{sec: ex sphere}, obtaining an analytic expression of
$T_0$ as a function of $(v,a,\eps)$ seems a difficult task.

The fact that $T_0(v,a,\eps)=+\infty$ provided that $a>0$ and $\eps>0$ are
chosen \suff small  is in strong contrast with the case of the
sphere. This is due to the fact that, in the disk, the structure of
the rays is much more complex: there are large families of
periodic and almost-periodic rays.  The ray drawn in Figure
\ref{figdiskrays} produces some sort of ``secular effect", itself
implying a precession whose speed can be tuned to coincide with the
speed $v$ of $\omega(t)$, provided $v\geq 1$. We
shall use this property in the proof.

 We stress that for the third property we do not need to
assume that $a$ is small. Actually, $a$ is any element of
$(0,2\pi)$. If $a$ is close to $2\pi$, then $\omega(t)$ is almost a
ring located at the boundary, moving with angular speed $v$, with a
``hole". As this hole moves around with speed $v$ there is a ray
that periodically hits the boundary and reflects from it exactly at the hole position.

\begin{proof}[Proof of Proposition \ref{prop_disk}.]
For $a\in (0,2 \pi)$ and $\eps\in (0,1)$ fixed, if $v$ is very large,
then the situation gets close to  that of a static control domain which is a
ring of width $\eps$, located at the boundary of the disk. For this
static domain the control time is $2-2\eps$. In fact, all rays enter
the region $\Omega_\eps= \{ 1-\eps < r < 1\}$ and the shortest time spent there is
$2 \eps$. During such time the angular distance travelled by the ray
is less than $2 \eps$. Hence, if $(2 \pi + 2\eps-a) /v < 2 \eps$, one
knows for sure that the ray will be ``caught''  by the moving open set
$\omega(t)$ before it leaves $\Omega_\eps$. Thus, for $v > v_0 = (2 \pi +
2\eps-a)/ (2 \eps)$ all rays enter $\omega(t)$ in finite time. Moreover, we
have $2 - 2\eps \leq  T_0(v,a,\eps)\leq 2 - 2\eps + (2 \pi + 2\eps)
/v$. This yields the announced asymptotics for $T_0(v,a,\eps)$. 

\medskip

Let us now investigate the three cases where $T_0(v,a,\eps)=+\infty$ as
stated in the proposition.  For the sake of intuition, it is simpler to first assume that
$\eps>0$ and $a>0$ are very small, and hence, that $\omega(t)$ is
close to being a point moving along the boundary of the disk, given by
$(\cos(vt),\sin(vt))$.

Let us then consider, as illustrated in Figure~\ref{fig: periodic
  ray}, {\em periodic} rays propagating ``anticlockwise" in the disk
(with speed equal to $1$), and reflecting at the boundary of the disk
according to Section~\ref{sec: generalized bichar}, that is, according
to geometrical optics. The trajectory of such rays forms a regular
polygon with vertices at the boundary of the unit disk. Let $n\geq 2$
be the number of vertices. For $n=2$, the ray travels along a diameter
of the disk, and passes through the origin; it is $4$-periodic. For
$n=3$, the trajectory of the ray forms an equilateral triangle
centered at the origin; etc. The length of an edge of such a regular
polygon with $n\geq 2$ vertices is equal to $2\sin\frac{\pi}{n}$.
This means that there exists $t_0 \in \R$ such that this ray reaches
the boundary at times $t_k = t_0 + 2k \sin\frac{\pi}{n}$.  Hence, if
$2v\sin\frac{\pi}{n}=\frac{2p\pi}{q}$ with $p$ and $q$ positive
integers, then the moving point $(\cos(vt),\sin(vt))$, taken at times $t_k$
ranges over a
finite number of points of $\d \Omega$. Therefore, there
exists a periodic ray with $n$ vertices never meeting
$\omega(t)$. This property remains clearly true for values of $a>0$ and
of $\eps>0$ chosen \suff small. This show the second statement of the proposition.

\begin{figure}[t]
\begin{center}
\resizebox{6cm}{!}{\input ./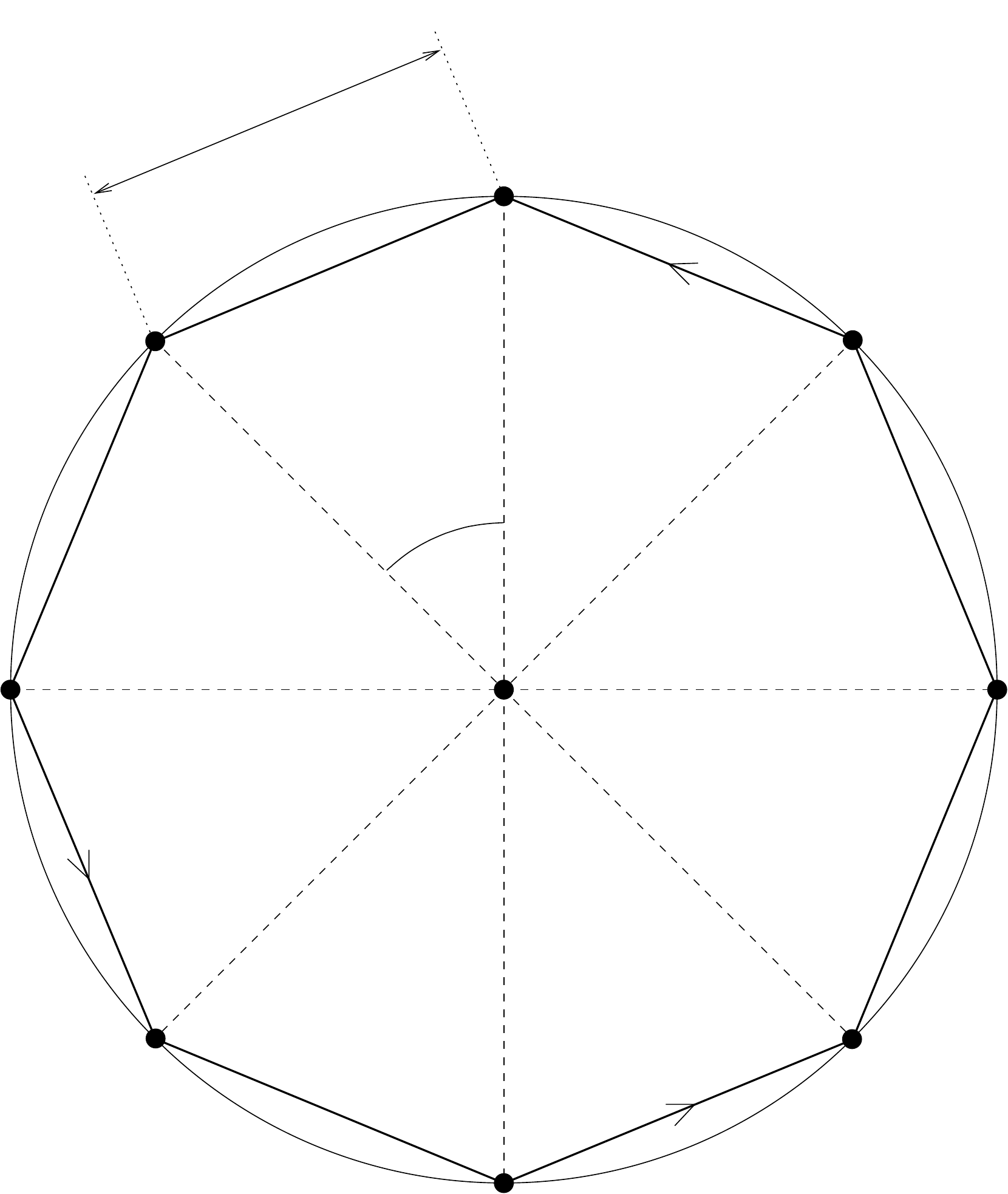_t}
\end{center}
\caption{A periodic ray yielding a regular polygon}
\label{fig: periodic ray}
\end{figure}

\medskip

Let us now consider a ray propagating in the disk, as
drawn in Figures~\ref{figdiskraysa} and \ref{figdiskraysb}, and reflecting at the boundary at
 consecutive points $P_k$, $k \in \N$. Denote by $O$ the center of the disk,
and by $\alpha$ the oriented angle $\widehat{P_0 O P_1}$. If $0<
\alpha < \pi$, then the ray appears to be going ``anticlockwise''
as in Figure~\ref{figdiskraysa}; if $\alpha=\pi$ then the ray bounces back
and forth on a diameter of the disk; if $\pi < \alpha < 2 \pi$,  then the ray appears to be going ``clockwise''
as in Figure~\ref{figdiskraysb}.  
\begin{figure}[h]
\begin{center}
\hspace*{-1cm}
\subfigure[``anticlockwise'', $0 <\alpha\leq \pi$. \label{figdiskraysa}]
{ \resizebox{8cm}{!}{\input ./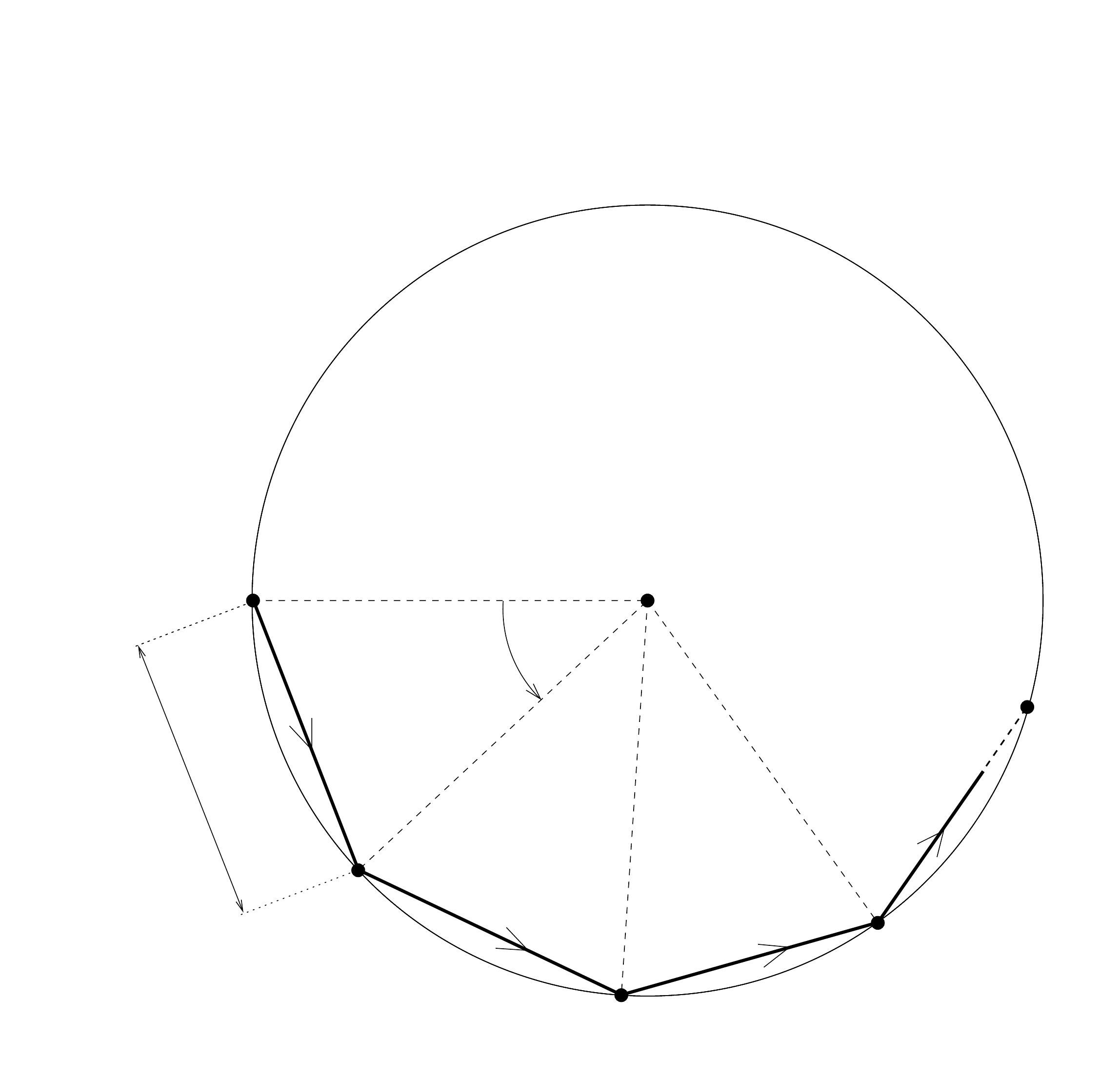_t} }\hspace*{-0.35cm}
\subfigure[``Clockwise'', $\pi\leq \alpha< 2 \pi$. \label{figdiskraysb}]
{ \resizebox{8cm}{!}{\input ./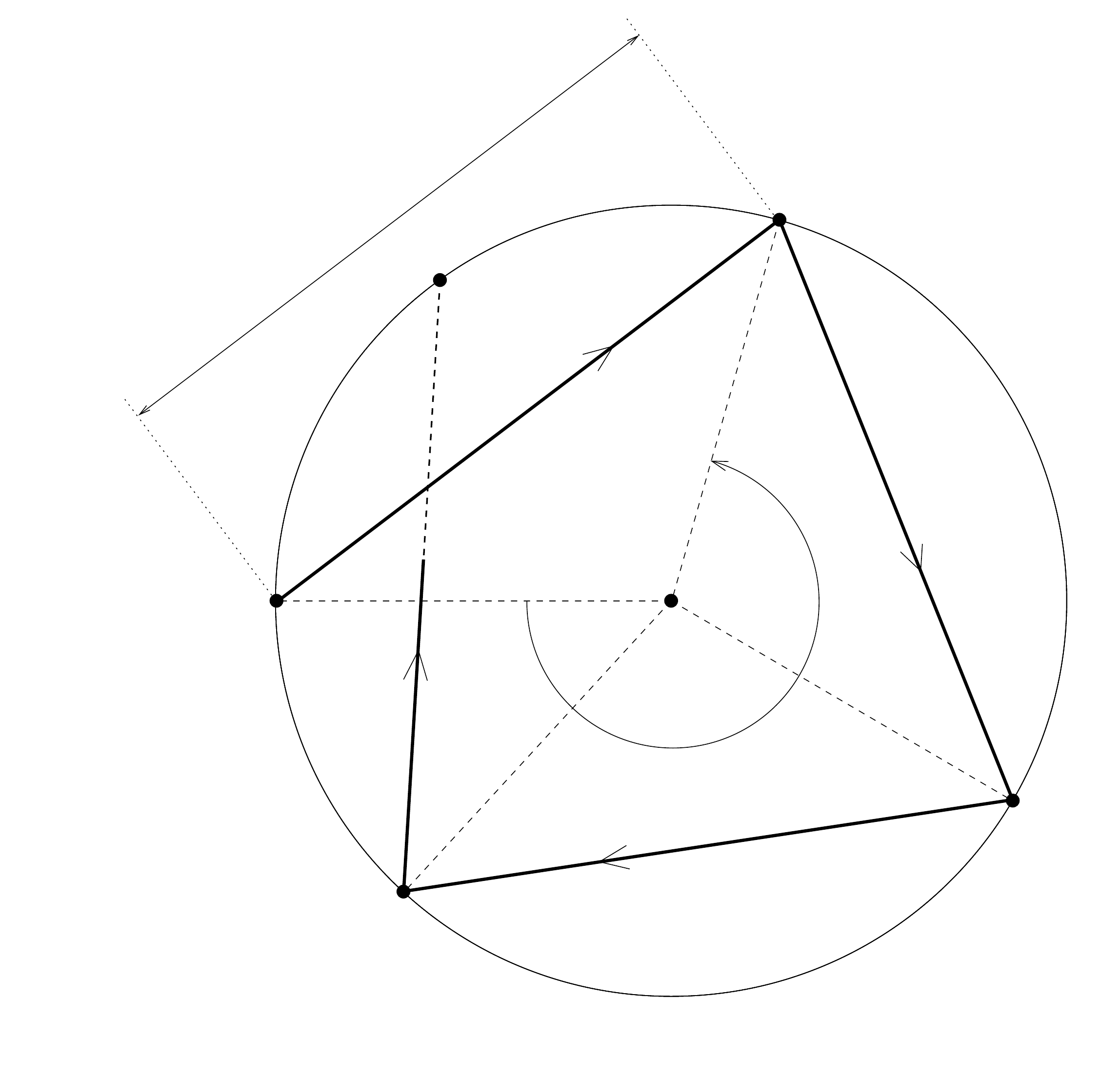_t} }
\end{center}
\caption{Rays propagating ``anticlockwise'' and ``clockwise''.}\label{figdiskrays}
\end{figure}
In any case, the distance $P_0P_1$, and more generally
$P_kP_{k+1}$, is equal to $2\sin(\alpha/2)$. Since the speed of the ray
is equal to $1$, the ray starting from $P_0$ at time $t=0$ reaches the
point $P_1$ at time $2\sin(\alpha/2)$, the point $P_k$ at time $t_k = 2k\sin(\alpha/2)$, etc.
Let $t\mapsto P(t)$ be the curve propagating {\em anticlockwise} along the
unit circle, with constant angular speed, passing exactly through the
points $P_k$ at time $t_k$. Its angular speed is then 
$$
w_P(\alpha) = \frac{\alpha}{2\sin(\alpha/2)},
$$
and we call it the \textit{precession speed}. This is the speed at
which the discrete points $P_k$ propagate ``anticlockwise" along the
unit circle.  Now, if the set $\omega(t)$ has the angular speed
$v=w_P(\alpha)$ (for some $\alpha\in(0,2\pi)$), then there exists
rays, as in Figures \ref{figdiskraysa} or \ref{figdiskraysb}, that never meet
$\omega(t)$, if $a \in (0,2\pi)$, provided that $\eps>0$ is chosen \suff small.  Since the
function $\alpha\mapsto w_P(\alpha)$ is monotone increasing from
$(0,2\pi)$ to $(1,\infty)$, it follows that, for every
$v\in (1,\infty)$, there exists $\alpha\in(0,2\pi)$ such that
$v=w_P(\alpha)$, and therefore $T_0(v,a,\eps)=+\infty$ provided that
$\eps$ is chosen \suff small. For $v =1$, there exists a gliding ray that
never meets $\omega(t)$. This can be seen as the limiting case
$\alpha \to 0$, as  rays can be  concentrated near the gliding ray. We thus
have  proven the third statement of the proposition.

Now, still working with the configurations drawn in Figure
\ref{figdiskraysb}, for $\alpha \in (\pi, 2 \pi)$, let $P(t)$ be the curve propagating {\em
  anticlockwise} along the unit circle, with constant angular speed,
passing successively through $P_0$ at time $0$, through $P_2$ at
time $4 \sin(\alpha/2)$, and $P_{2k}$ at time $2k \sin(\alpha/2)$.
Its angular  speed is given by 
$$
w_P(\alpha) = \frac{2\alpha- 2\pi}{4 \sin(\alpha/2)} = \frac{\alpha- \pi}{2 \sin(\alpha/2)}.
$$
The function $\alpha\mapsto w_P(\alpha)$ is monotone increasing from $(\pi,2\pi)$ to $(0,+\infty)$.
If the set $\omega(t)$, with $a \in (0,\pi)$ and $\eps>0$ small, is initially (at time $0$)
located between the points $P_0$ and $P_0'$ its diametrically opposite point, and if $v=w_P(\alpha)$, then the ray drawn in
Figure~\ref{figdiskraysb} never meets $\omega(t)$. This is illustrated
in Figure~\ref{figdiskrays3}. We have thus proven
the last statement of the proposition.
\end{proof}

\begin{remark}
$\phantom{-}$
\begin{enumerate}

\item
It is interesting to note that, even for domains such that $a$ is
close to $2\pi$, the $t-GCC$ property fails if $v>1$ and if $\eps$ is
chosen too small. This example is striking, because in that case, if the control domain were static, then it would satisfy the usual GCC (this is true as soon as $a>\pi$).
This example shows that, when considering a control domain satisfying the GCC, then, when making it move, the $t$-GCC property may fail.
However, this example is a domain moving faster than the actual
wave speed. This is rather non physical.

\begin{figure}[H]
\begin{center}
\subfigure[$t=0$. \label{figdiskrays3a}]
{ \resizebox{6cm}{!}{\input ./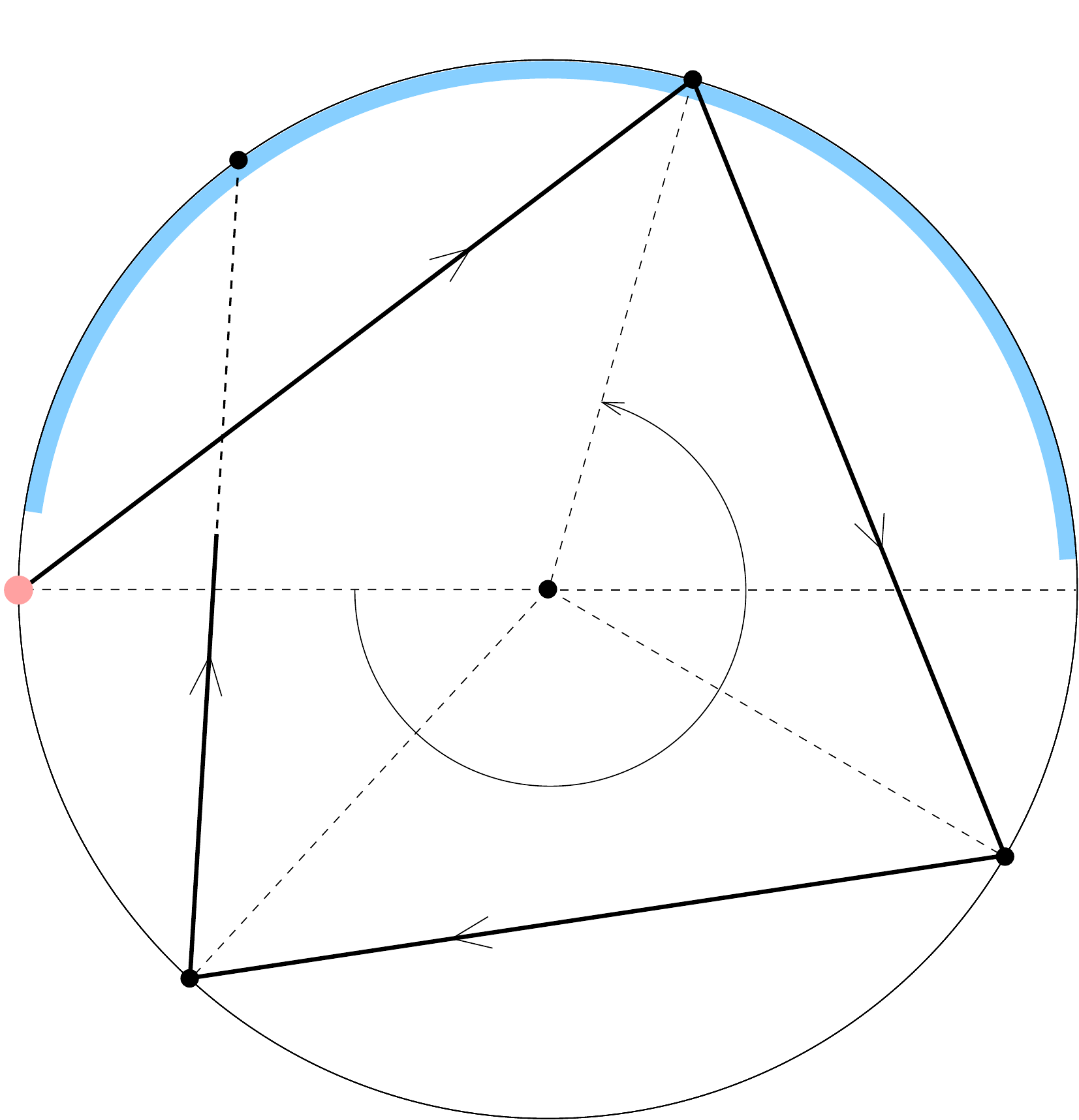_t} }\qquad 
\subfigure[$t=2 \sin(\alpha/2)$. \label{figdiskrays3b}]
{ \resizebox{6cm}{!}{\input ./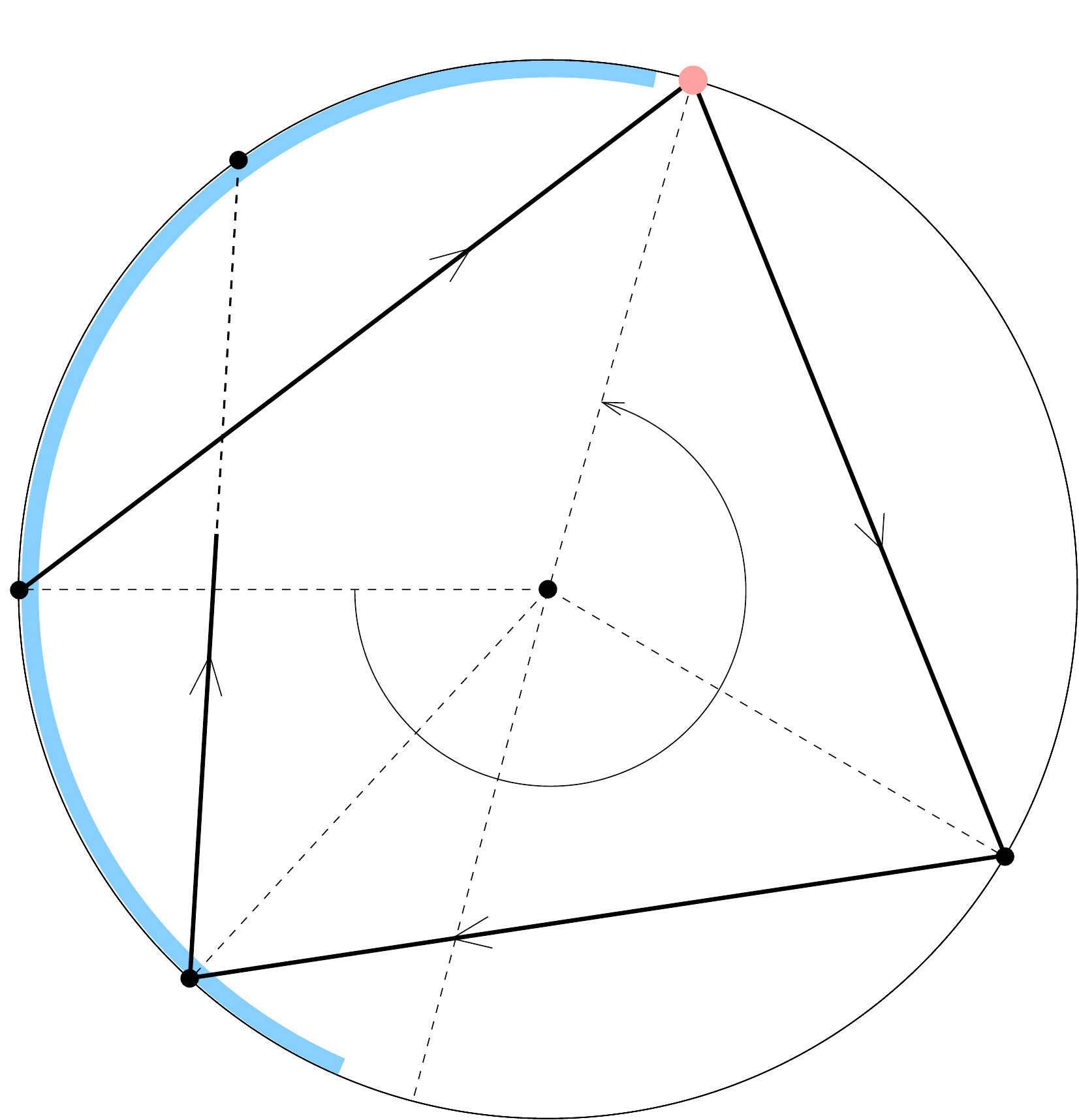_t} }\quad
\subfigure[$t=4 \sin(\alpha/2)$. \label{figdiskrays3c}]
{ \resizebox{6cm}{!}{\input ./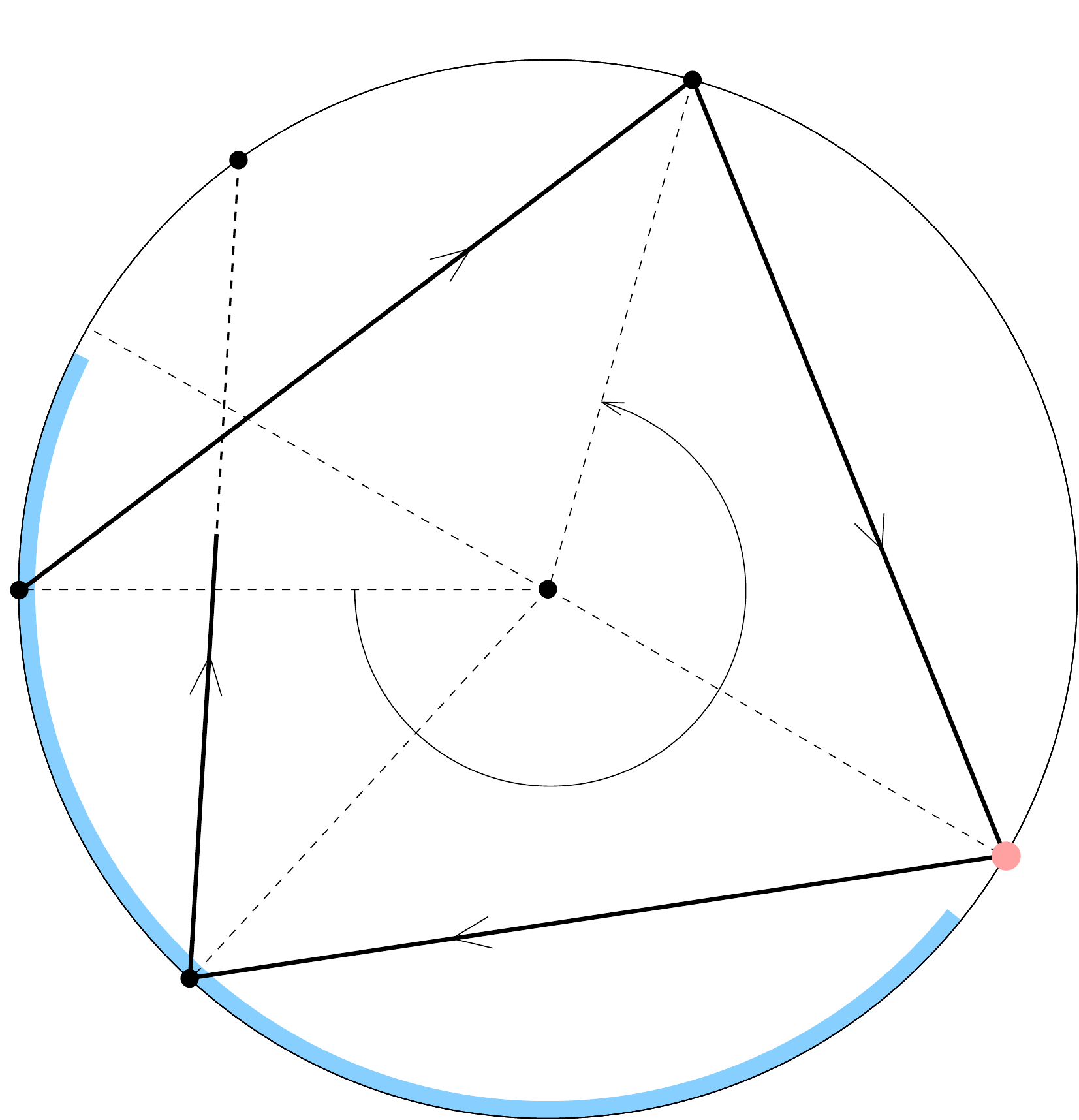_t} }
\end{center}
\caption{Illustration of property 4 of Proposition~\ref{prop_disk}.}\label{figdiskrays3}
\end{figure}

\item For the fourth property of the previous proposition we obtain a
  moving open set $\omega(t)$ with an ``angular measure'' that is less
  than $\pi$, that is $0< a < \pi$ (see Figure~\ref{figdiskrays3}). In fact, if one allows for
  $\omega(t)$ to be not connected, but rather the union of two
  connected components, for any velocity $v \in (0,+\infty)$ we can exhibit moving sets whose ``angular
  measure'' is a close as one wants to $2 \pi$ and yet the $t$-GCC does not
  hold. This is illustrated  in Figure~\ref{figdiskrays3bis}.

\begin{figure}[h]
\begin{center}
\subfigure[$t=0$. \label{figdiskrays3abis}]
{ \resizebox{4.2cm}{!}{\input ./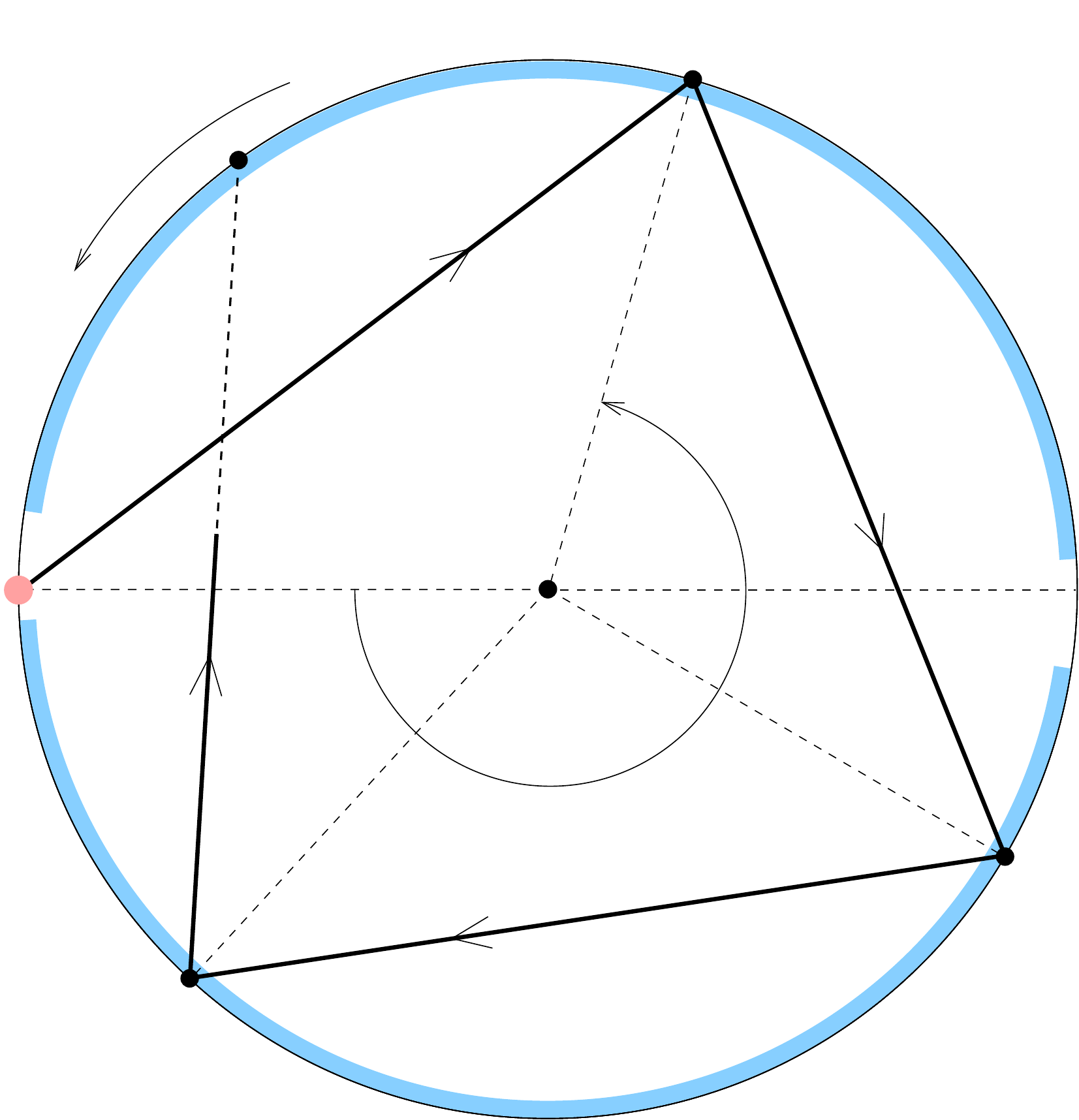_t} }\quad 
\subfigure[$t=2 \sin(\alpha/2)$. \label{figdiskrays3bbis}]
{ \resizebox{4.2cm}{!}{\input ./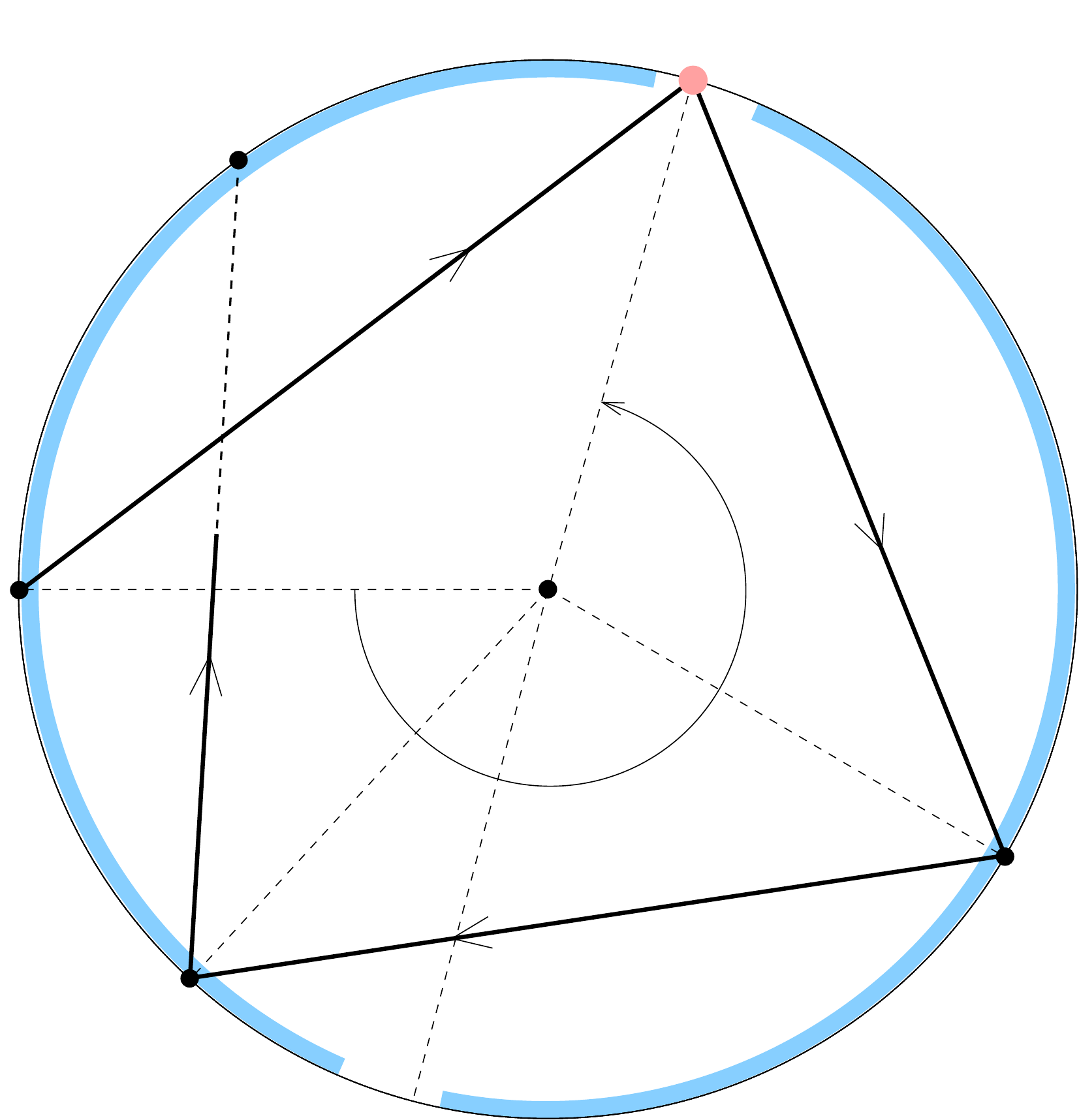_t} }\quad
\subfigure[$t=4 \sin(\alpha/2)$. \label{figdiskrays3cbis}]
{ \resizebox{4.2cm}{!}{\input ./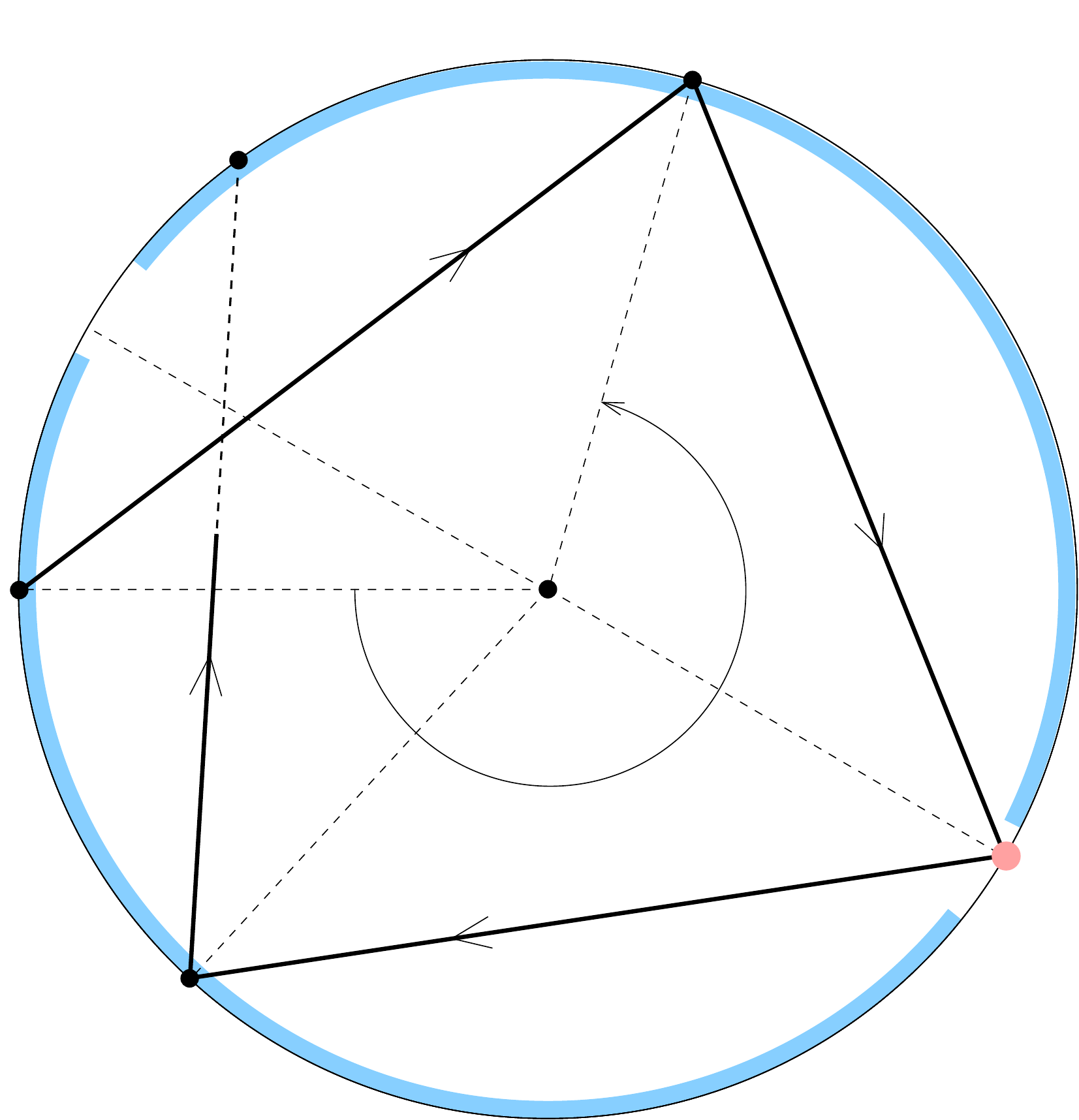_t} }
\end{center}
\caption{Case of a non connected moving open set $\omega(t)$ moving
  anticlockwise, non statisfying the $t$-GCC with yet a very large
  ``angular'' measure.}\label{figdiskrays3bis}
\end{figure}

\begin{figure}[h]
\begin{center}
\subfigure[$n=2$, $\pi< \alpha < \pi + \delta$. \label{figdiskrays4a}]
{ \resizebox{4.5cm}{!}{\input ./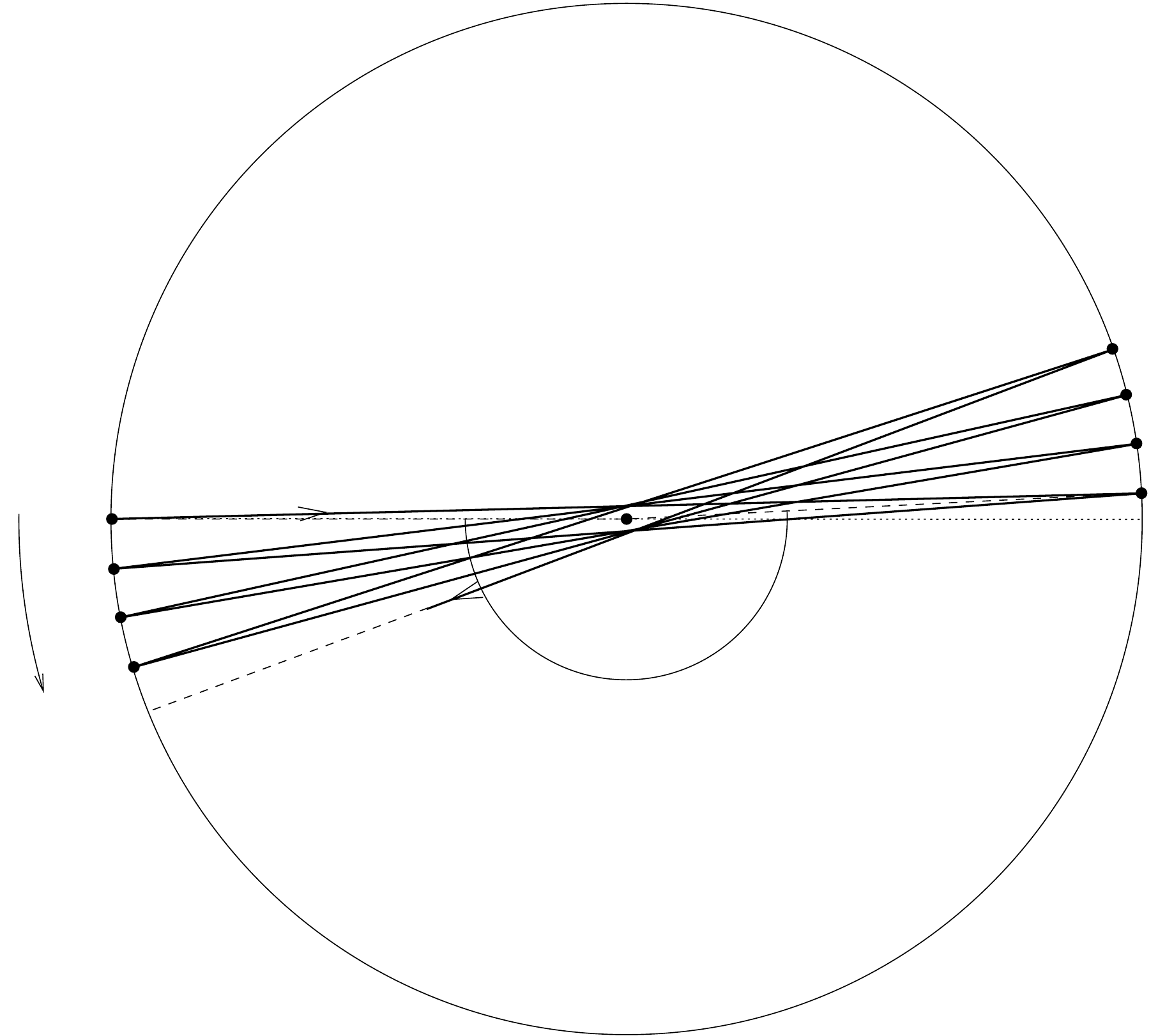_t} }\quad 
\subfigure[$n=3$, $4\pi/3< \alpha < 4\pi/3 + \delta$. \label{figdiskrays4b}]
{ \resizebox{4.5cm}{!}{\input ./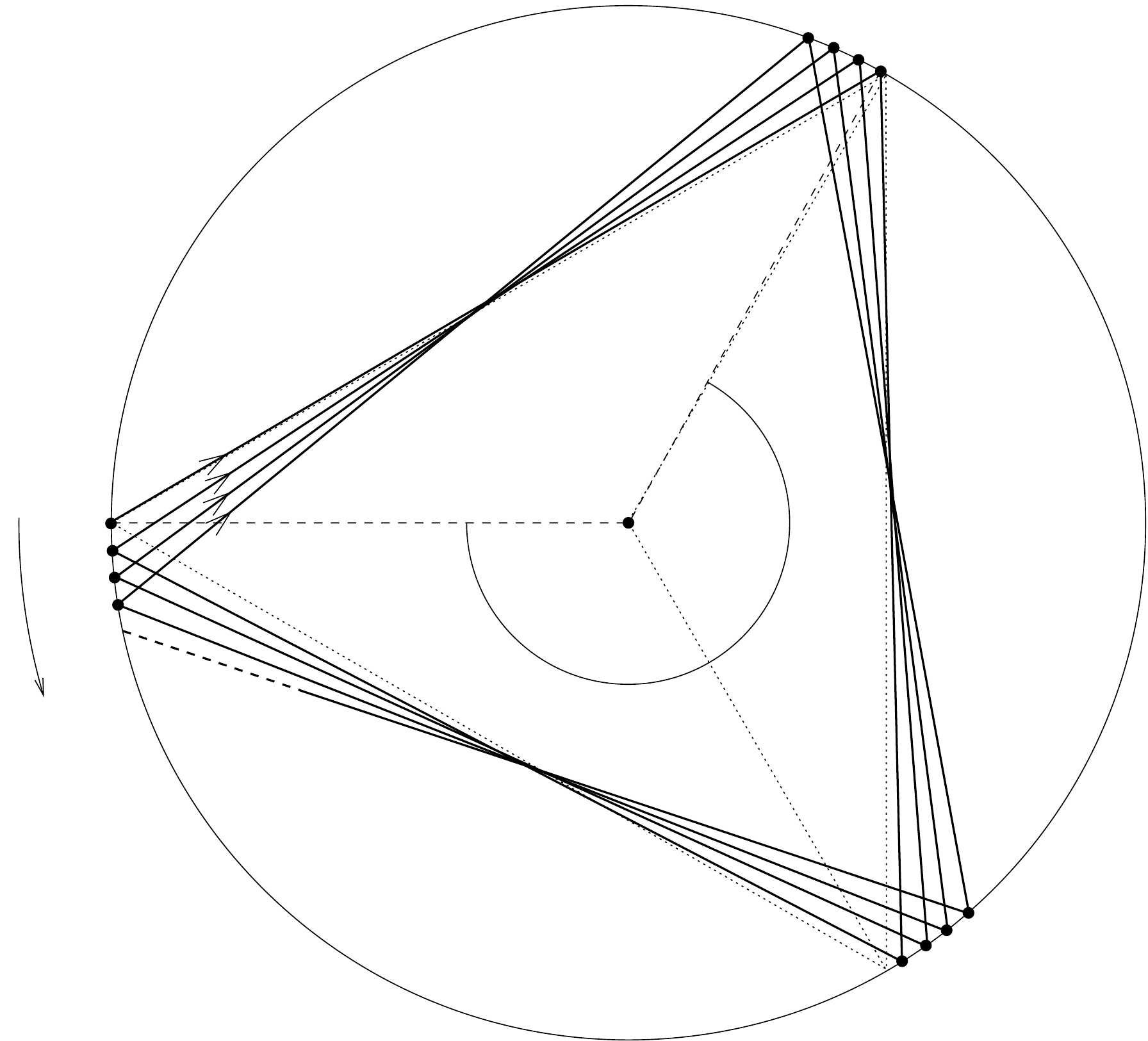_t} }\quad
\subfigure[$n=4$, $3\pi/2< \alpha < 3\pi/2 + \delta$. \label{figdiskrays4c}]
{ \resizebox{4.5cm}{!}{\input ./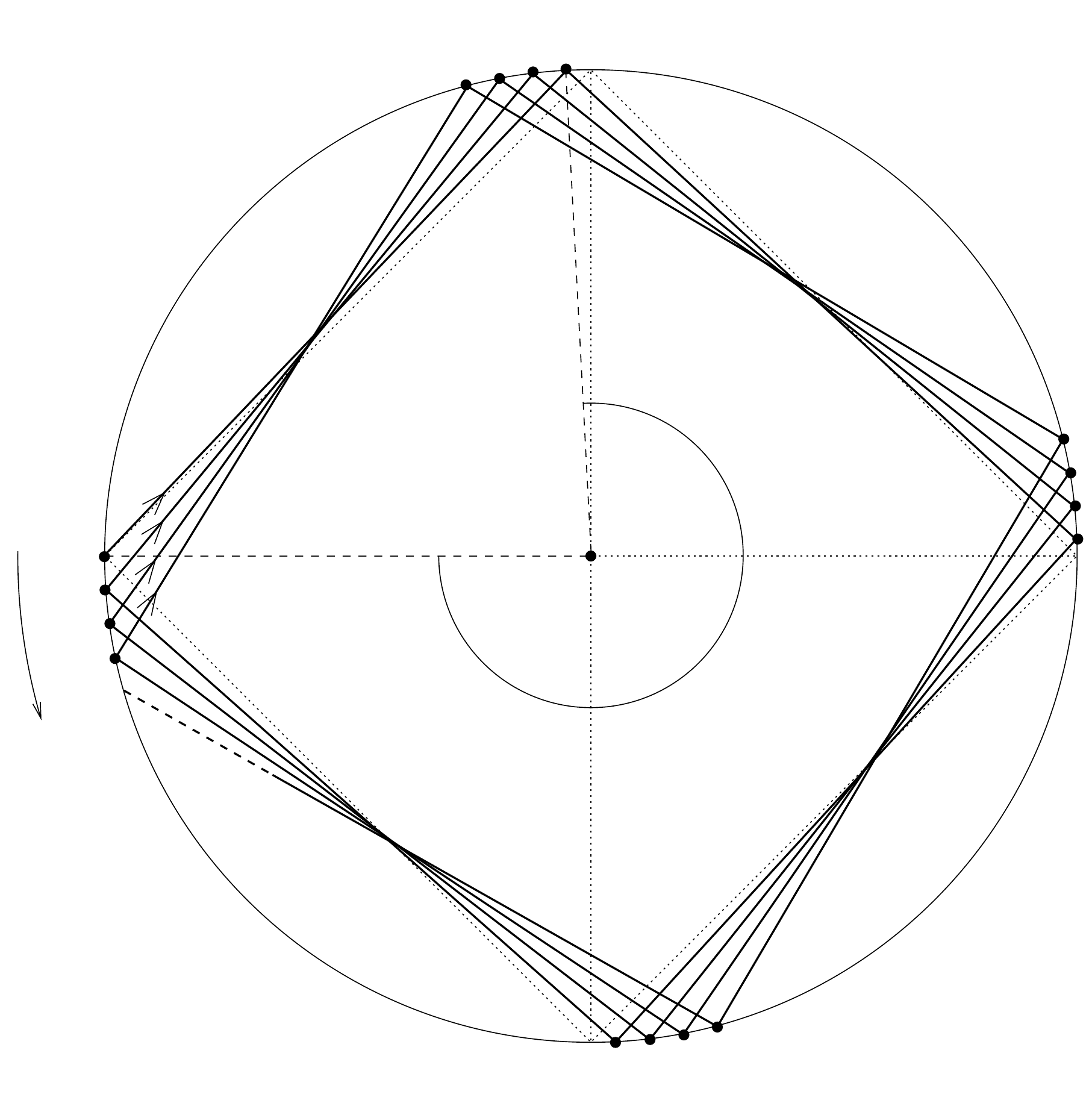_t} }
\end{center}
\caption{Cases of slow precession speeds with $0 <
  \alpha- 2 \pi\frac{n-1}{n} <\delta$,  with  $\delta$ small and
  $n\geq 2$, that is,
  with a trajectory ``close'' to that of a periodic ray that forms a
  regular polygon (see Figure~\ref{fig: periodic ray}). }\label{figdiskrays4}
\end{figure}

\item 
If $v$ cannot be chosen as large as desired (for physical reasons), 
Proposition \ref{prop_disk} states that the $t$-GCC does not hold true if
$a>0$ and $\eps>0$ are too small. As shown in the proof above, this
lack of observability is due to a secular effect caused by geodesics
whose trace at the boundary produces a pattern that itself varies in
time, with a precession speed that can be tuned to match that of the control
domain. In fact, a precession speed can be obtained as slow as one
wants if $\alpha$ is chosen such that $\pi < \alpha < \pi + \delta$
with $\delta>0$ small. This is illustrated in
Figure~\ref{figdiskrays4a}. 
If one is close to regular polygons, as illustrated in
Figure~\ref{figdiskrays4}, one obtains a precession pattern, that can be used to deduce 
other families of examples of moving domains $\omega(t)$ with
multiple connected components with velocities $v>0$ that do not
satisfy the $t$-GCC.

\item 
Proposition \ref{prop_disk} has been established for the domain drawn in Figure \ref{fig: moving domain on the disk}, sliding anticlockwise along the boundary with a constant angular speed. Other situations can be of interest: we could allow the domain to move with a nonconstant angular speed. For instance, we could allow the domain to move anticlockwise within a certain horizon of time, and then clockwise. This would certainly improve the observability property.
A situation that can be much more interesting in view of practical
issues is to let the angular speed of the control domain evolve
according to $v(t) = v + \beta\sin(\gamma t)$ (with $\beta>0$ small),
that is, with a speed oscillating around a constant value $v$. We
expect that such a configuration, with an appropriate choice of
coefficients, will yield the observability property to be more robust, by
avoiding the situation described in the second property of Proposition
\ref{prop_disk} (non-observability for a dense set of speeds).
\end{enumerate}
\end{remark}

We now state a positive result. 
For $a \in (0,2\pi)$ and $\eps\in (0,1)$,  assume now that the domain $\omega(t)$ is given by $ \omega(t)=\{ (r,\theta)\in [0,1]\times\R\ ;
\  1-\eps< r < 1,\ \theta_0(t)<\theta< a+ \theta_0(t) \}$, with
\begin{align*}
  \theta_0(t) = \begin{cases}
    0 & \text{if}\ 0\leq  t < t_0,\\
    v(t-t_0) & \text{if}\ t_0\leq  t,
  \end{cases}
\end{align*}
for some $t_0>0$ and $v>0$. We set $Q = \cup_{t\geq 0 } \omega(t)$. In this configuration, at first, the domain is still, and then one lets it move. 
\begin{proposition}
\label{eq: positive result disk}
If $4\pi/5 < a < \pi$, $t_0> 2 \pi$, then there exists $0 < v<1$ such that
$T_0(Q, \Omega) < \infty$. 
\end{proposition}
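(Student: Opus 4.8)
The plan is to exploit the explicit billiard dynamics in the disk, exactly as set up in the proof of Proposition~\ref{prop_disk}. I parametrize a ray by the invariant central angle $\alpha\in(0,2\pi)$ between two consecutive boundary reflection points and by the angular position $\theta_0$ of one reflection; the reflections then occur at angles $\theta_0+k\alpha\bmod 2\pi$ and at times spaced by the chord length $2\sin(\alpha/2)$. Since $\omega(t)$ is adjacent to the boundary (so that $Q$, viewed in $\R\times\ovl{\Omega}$, contains the boundary arc), a ray meets $Q$ precisely when one of its reflections, occurring at some time $t_k\ge 0$, has angular position inside the (possibly moving) arc of length $a$. In particular the gliding ray on $\d\Omega$ and the whispering-gallery rays near it, having angular speed $1$, are caught as soon as $v\neq 1$, since the arc moves at speed $v<1$ and is overtaken; these are the only rays meeting $G^\infty$ and they are dealt with by hand.

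First I would analyze the moving phase $t\ge t_0$. Relative to the arc, whose endpoint sits at angle $v(t-t_0)$, each bounce advances the reflection point by $\Delta=\alpha-2v\sin(\alpha/2)\bmod 2\pi$, so a ray avoids $Q$ forever only if the orbit $\{\,\mathrm{const}+k\Delta\bmod 2\pi\,\}_{k\ge 0}$ stays inside the complementary gap, of length $2\pi-a\in(\pi,6\pi/5)$ (here both bounds $4\pi/5<a<\pi$ are used). If $\Delta/(2\pi)$ is irrational the orbit is dense and the ray is caught; if $\Delta=2\pi p/q$ in lowest terms with $q\ge 3$, the $q$ relative positions span an arc of length $2\pi(1-1/q)\ge 4\pi/3>6\pi/5$, too large to fit in the gap, so again the ray is caught. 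The case $q=1$, i.e.\ $\Delta\equiv 0$, would force $v=\alpha/(2\sin(\alpha/2))=w_P(\alpha)$, which is impossible because $w_P(\alpha)>1$ for every $\alpha\in(0,2\pi)$ while $v<1$; this is exactly where the hypothesis $v<1$ is used. Hence the sole obstruction in the moving phase is $q=2$, i.e.\ $\Delta\equiv\pi\bmod 2\pi$: there is a unique $\alpha_\ast=\alpha_\ast(v)\in(\pi,2\pi)$ solving $\alpha_\ast-2v\sin(\alpha_\ast/2)=\pi$, equivalently $v=(\alpha_\ast-\pi)/(2\sin(\alpha_\ast/2))$, and for these rays the two antipodal relative positions can both be placed in the gap (of length $>\pi$), so they may evade the moving arc for all positive time.

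The heart of the argument is then to neutralize this single resonant family during the still phase $[0,t_0]$ by a suitable choice of $v$. The static arc $(0,a)$ catches a ray with angle $\alpha_\ast$ as soon as the point set $\{\,k\alpha_\ast\bmod 2\pi\,\}$, taken over the indices available before time $t_0$, has maximal gap strictly less than $a$: an elementary pigeonhole on the circle then shows every translate of $(0,a)$ contains one of these points. I would pick $v$ close to $1$, so that $\alpha_\ast(v)$ is close to the unique root $\alpha_1\in(\pi,2\pi)$ of $\alpha-\pi=2\sin(\alpha/2)$, numerically $\alpha_1\approx 1.47\pi$, hence bounded away from $\pi$. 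With reflection spacing $2\sin(\alpha_\ast/2)\approx 1.48$, the hypothesis $t_0>2\pi$ guarantees at least four reflections in $[0,t_0]$, and a direct computation shows that any four consecutive points $\{k\alpha_\ast\bmod 2\pi\}$ have maximal gap $\approx 0.53\pi<4\pi/5<a$. Thus, for such $v$, every ray with $\alpha=\alpha_\ast$ is caught before the arc even begins to move, while all other rays are caught by the moving-phase dichotomy above. I expect the main difficulty to be precisely this quantitative maximal-gap estimate, where the thresholds $4\pi/5$ and $2\pi$ enter: one must verify simultaneously that enough bounces occur and that the points $\{k\alpha_\ast\}$ are spread sufficiently, uniformly over the unavoidable phase shift in the first reflection time.

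Finally I would promote this pointwise $t$-GCC to a uniform control time. Since $Q$ is open and, away from $G^\infty$, the generalized bicharacteristic flow depends continuously on its data (Proposition~\ref{prop: uniqueness bicharacteristic flow}), the set of rays meeting $Q$ within a fixed time is open; as these sets exhaust the compact cosphere space $S^\ast\hat\Sigma$ when the time grows, a finite $T$ works for all rays at once, so $T_0(Q,\Omega)<\infty$. The finitely many gliding directions lying in $G^\infty$ are covered directly, together with a neighborhood of fast-precessing near-gliding rays, so no uniformity is lost there.
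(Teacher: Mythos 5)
Your proposal is correct, but the moving-phase mechanism is genuinely different from the paper's. The paper splits rays by the central angle $\alpha$: the still phase (this is where $t_0>2\pi$ enters) catches every ray whose per-bounce angular advance $\beta=\min(\alpha,2\pi-\alpha)$ is less than $a$, within time $T_0(\alpha)\leq 2\sin(\beta/2)\bigl(\lfloor(2\pi-a)/\beta\rfloor+1\bigr)<2\pi$; the moving phase then handles $\alpha\in[a,2\pi-a]$ by a quantitative sweeping condition: $v$ is chosen in the explicit window $\frac{\pi-a}{2\sin(a/2)}<v<\frac{3a-2\pi}{4\sin(a/2)}$ (nonempty precisely when $a>4\pi/5$, and contained in $(0,\pi/4)$), which forces the relative displacement $\ell=4v\sin(\alpha/2)-2(\alpha-\pi)$ of the even/odd bounce subsequences per double bounce to satisfy $0<\ell<a$, so the arc sweeps over them without skipping — giving explicit, uniform capture times. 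You instead work with the single-bounce relative step $\Delta(\alpha)=\alpha-2v\sin(\alpha/2)$ and run a rotation-number dichotomy (dense orbit if $\Delta/2\pi$ is irrational; $q$ equally spaced points with spacing $2\pi/q<a$ if rational with $q\geq 3$; $q=1$ impossible since $\sin x<x$ forces $\Delta\in(0,2\pi)$ strictly when $v<1$), isolating the unique antipodal resonance $\alpha_*(v)$ and killing it during the still phase — with $v$ near $1$ rather than in the paper's slow regime. Your uniformity step (openness of the caught-by-time-$T$ sets plus compactness) is legitimate here because the disk is strictly convex, so $G^\infty=\emptyset$ and the compressed Melrose--Sj\"ostrand flow is continuous. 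What each approach buys: the paper's argument is fully explicit in both velocity and control time; yours enlarges the admissible velocity set — indeed your four-point gap computation at $\alpha_*\approx 1.47\pi$ is the paper's still-phase estimate specialized to $\beta(\alpha_*)=2\pi-\alpha_*\approx 0.53\pi<a$, and the same reasoning shows every $v\in\bigl(\frac{\pi-a}{2\sin(a/2)},1\bigr)$ works, while your $q\geq 3$ exclusion only needs $a>2\pi/3$ — at the price of a nonconstructive control time, since capture times degenerate near the resonance and near gliding and only compactness restores uniformity.

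Two harmless slips you should fix. First, ``a ray meets $Q$ precisely when one of its reflections has angular position inside the arc'' should read ``in particular when'': a chord can traverse the collar $\{1-\eps<r<1\}$ inside the angular window without reflecting there, so only the sufficiency direction is true — and it is all you use (the paper likewise remarks that it does not exploit the thickness $\eps$). Second, in the disk no ray lies in $G^\infty$: by strict convexity every glancing point has contact of order exactly two, so gliding rays belong to $G_g^2$ and $G^\infty=\emptyset$ — which is not an obstacle but precisely the hypothesis your final compactness argument requires.
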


\begin{remark}$\phantom{-}$
\begin{enumerate}
\item
The important aspect of this result lies in the following facts. First, if at
rest, the observability set does not satisfy the geometric control condition;
hence, its motion is crucial for the $t$-GCC to hold. Second, the motion is
performed at a velocity $v$  that is less than that of the wave speed; we
thus have a physically meaningful example. 
\item The result is not optimal as we do not exploit the
  thickness $\eps$ of the domain $\omega(t)$ in the proof. 
\item It would be interesting to further study this ``stop-and-go''
  strategy and see how small the value of $a>0$ can be chosen.
\end{enumerate}
\end{remark}

\begin{proof}[Proof of Proposition~\ref{eq: positive result disk}]
  Let $0< t < t_0$; then $\omega(t) = \omega(0)$ is still.  First, we
  consider the ray associated with $0 \leq \alpha < a$, or
  symmetrically $2 \pi-a< \alpha\leq 2 \pi$, then the movement of the
  successive points $P_k$, $k \in \N$, is anticlockwise, or clockwise,
  respectively; see Figure~\ref{figdiskrays}. Depending on the case
  considered we denote $\beta=\alpha$ or $\beta= 2 \pi- \alpha$.  The
  above condition thus reads better as follows
   $0 \leq \beta < a$.
 In both cases, the (unsigned) angular distance
  between two points is precisely $\beta$. 
  The successive points $P_k$, $k \in \N$,  thus end up meeting $\omega(0)$ in finite time.
  (The case $\beta=0$ coincides with a gliding ray that has angular
  speed 1; it thus meet $\omega(0)$ in finite time.)
 Let us consider $\beta\neq 0$. The maximal number of steps it takes
for any ray associated with $\beta$ to enter $\omega(0)$
is then $\lfloor \frac{2\pi -a}{\beta} \rfloor +1$ yielding a maximal time 
$T_0(\alpha) = 2 \sin(\beta/2) \big( \lfloor \frac{2\pi -a}{\beta} \rfloor +1\big)$,
as the time lapse between two points $P_k$ is $2 \sin(\beta/2)$. Here, the notation $\lfloor . \rfloor$ stands for the usual floor function.
We thus need $T_0 > \max\big(2 \pi-a, \sup_{0< \beta< a}
T_0(\alpha)\big)$.
The value $2 \pi-a$ accounts for the gliding rays ($\beta=0$). 
 Here, we give a crude upper bound for $T_0(\alpha) $ observing that
\begin{align*}
  T_0(\alpha) \leq \frac{\sin(\beta/2)}{\beta/2} (2 \pi - a+ \beta)
  \leq (2 \pi - a+ \beta) < 2 \pi.
\end{align*}
We thus see that if we choose $t_0 > 2 \pi$ then all the rays
associated with the angle $0 \leq \beta< a$ enter $\omega(0)$ for
$0\leq t < t_0$. 

Second, we consider $t\geq t_0$ and we are left only with the rays
that are associated with $a \leq \alpha\leq 2 \pi -a$.
For these rays we consider the two sequences of points $(P_{2k})_k$
and $(P_{2k+1})_k$. The time lapse for a ray
to go  from one point to the consecutive point in these two sequences
is $4 \sin(\alpha/2)$ and this is associated with the (signed) angle
$2(\alpha-\pi)$.
In fact, as $a>\pi/2$, if $a\leq \alpha\leq \pi$, then both sequences
move clockwise and if $\pi \leq \alpha \leq 2\pi -a$, then both sequences
move anticlockwise.

If $v>0$ is the angular speed of
$\omega(t)$ for $t\geq t_0$ then we require $2 (\alpha - \pi) <  4 \sin(\alpha/2) v <
a + 2 (\alpha - \pi)$, that is 
\begin{align}
  \label{eq: encadrement v}
  &\frac{(\alpha - \pi)}{2 \sin(\alpha/2)} <   v < \frac{a + 2 (\alpha - \pi)}{4 \sin(\alpha/2)},
  \qquad \text{for}\ a \leq  \alpha \leq 2\pi -a.
\end{align} 
Observe that $a + 2 (\alpha - \pi)>0$ as $a > \pi/2$.  The left
inequality in \eqref{eq: encadrement v} is necessary as it implies
that the anticlockwise moving open set $\omega(t)$ will be faster than the
two sequences given above, a necessary condition to be able to catch
points in thoses sequences. In particular, this necessary condition is
clearly filled  if the sequences move
clockwise, that is if $a \leq \alpha \leq \pi$.  The right inequality in  \eqref{eq: encadrement v}
expresses that $\omega(t)$ will not turn too fast and then miss the
discrete sequences of points. In fact, during a time interval of length
$4 \sin(\alpha/2)$ the relative angular displacement of the sequence
and the moving set $\omega(t)$ is $\ell = 4 \sin(\alpha/2) v -2
(\alpha - \pi)$ and with \eqref{eq: encadrement v}  we have $0 < \ell < a$. This expresses that the
sequence points cannot be missed.

  As both bounds of \eqref{eq: encadrement
  v} are increasing functions for $\alpha \in (a,2\pi-a)$ we obtain
the following sufficient condition
\begin{align*}
  &\frac{(\pi -a)}{2 \sin(a/2)} <   v < \frac{3 a - 2 \pi}{4 \sin(a/2)}.
\end{align*} 
We see that it can be satisfied if $a> 4 \pi /5$. Observing that
$a\mapsto h(a)=\frac{(3a - 2\pi)}{4 \sin(a/2)} $ increases on $(0, \pi]$ and $ 
0< h(\pi)= \pi/4< 1$ we see that the found admissible velocities are such that $0< v< 1$.
\end{proof}

\subsection{A moving domain in a  square}\label{sec_square}
Let $M=\R^2$ (Euclidean) and $\Omega=(0,1)^2$, be the unit square.  We
recall that, as discussed in Remark \ref{rem-bord}--\eqref{rem-bord2},
the statement of Theorem \ref{mainthm} is still valid in the square,
because the generalized bicharacteristic flow is well defined.

Let $a\in(0,1)$. We consider the fixed domain $\tilde{\omega}_0=(-a,a)^2$,
a square  centered at the origin $(0,0)$, and we set 
$\omega_0 = \tilde{\omega}_0 \cap \Omega$
(see Figure \ref{figsquare}).  Since there are periodic rays, bouncing
back and forth between opposite sides of the square, that
remain away from $\omega_0$, the GCC
does not holds true for $\omega_0$, and the wave equation cannot be
observed from the  domain $\omega_0$ in the sense of~\eqref{observability}.

Now, for a given $T>0$, consider a continuous path
$t\in[0,T]\mapsto (x(t),y(t))$ in the closed square
$[0,1]^2$, with $(x(0),y(0))=(0,0)$. We set
$$
\tilde{\omega}(t) = (x(t)-a,x(t)+a)\times (y(t)-a,y(t)+a) \quad
\text{and} \quad \omega(t) = \tilde{\omega}(t) \cap \Omega.
$$
To avoid the occurrence of periodic rays, as described above, that
never meet $\omega(t)$ a necessary condition for the $t$-GCC to hold true
is 
$$
[a,1-a] \subset x([0,T])\quad\textrm{and}\quad [a,1-a] \subset y([0,T]).
$$

Let us assume that the point $(x(t),y(t))$ moves precisely along the boundary of
the square $[0,1]^2$, 
anticlockwise, and with a constant speed $v$. The path $t \mapsto
(x(t), y(t))$ only exhibits singularities when reaching a corner of
the square $[0,1]^2$, where the direction of the speed is
discontinuous (see Figure \ref{figsquare}).

\begin{figure}[h]
\begin{center}
\resizebox{6cm}{!}{\input ./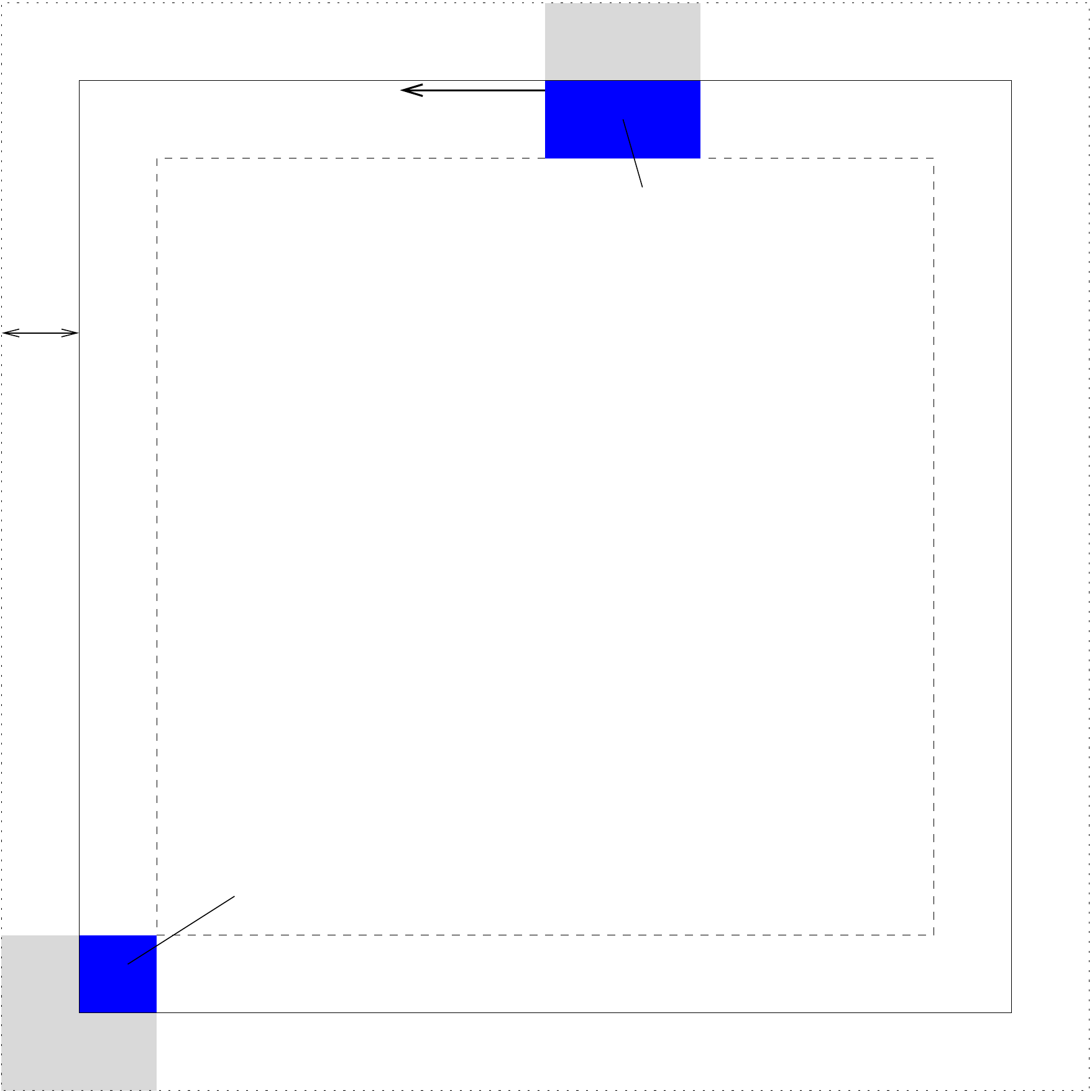_t}
\end{center}
\caption{Time-varying domain in the square $(0,1)^2$.}\label{figsquare}
\end{figure}
We denote by $T_0(v,a)$ the control time.

\begin{proposition}\label{prop_square}
  We have the following two results:
\begin{enumerate}
\item Let $a\in(0,1/2)$ be arbitrary. For $v > v_0 = (2 -a)/a$ we have 
  $T_0(v,a)<+\infty$,  and moreover 
  $T_0(v,a)\sim \max(\sqrt{2}(1-2a),0)$ as $v\rightarrow +\infty$.

\item If $v\in \cup_{(p,q)\in \N} \sqrt{p^2+q^2}\Q$, there exists $a_0>0$ such
that $T_0(v,a)=+\infty$ if $a\in(0,a_0)$. 
\end{enumerate}
\end{proposition}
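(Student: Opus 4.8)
The plan is to analyze the billiard dynamics in the unit square via the classical unfolding (reflection) technique, under which a generalized geodesic in $\Omega=(0,1)^2$, reflected according to geometrical optics, becomes a straight line traversed at unit speed in the plane tiled by the reflected copies of the square. Throughout I use that, by Remark~\ref{rem-bord}, the generalized bicharacteristic flow is well defined in the square, and that every ray travels at speed one. I introduce the boundary band $\Omega_a=\Omega\setminus[a,1-a]^2$ and record the two elementary facts on which everything rests: first, the moving set $\omega(t)$ is always contained in $\Omega_a$, since its center lies on $\partial\Omega$ and its radial extent is $a$; second, the minimal time a ray can spend in $\Omega_a$ between entering it at depth $a$ and leaving it is exactly $2a$, attained by a ray hitting a single edge perpendicularly, because along such a passage the total variation of the coordinate transverse to that edge equals $2a$ while the speed is one.

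For the first statement, I would first note that every ray enters $\Omega_a$: a ray cannot stay confined to the central hole $[a,1-a]^2$, since each straight segment in $\Omega$ must reach $\partial\Omega$ and hence cross $\Omega_a$, and the longest segment the hole can contain is its diagonal, of length $\sqrt2(1-2a)$. Thus the limiting static band has control time $\sqrt2(1-2a)$ when $a<1/2$ (and $0$ when $a\geq 1/2$, the hole being empty). For finite but large $v$ I would run the same dwell-versus-sweep balance as in the disk and sphere cases (Propositions~\ref{prop_sphere} and~\ref{prop_disk}): while a ray sits in $\Omega_a$ for a time at least $2a$, the center of $\omega(t)$ runs along $\partial\Omega$ at speed $v$, and since the domain has boundary-extent $2a$ on a perimeter of length $4$, it is guaranteed to overlap the ray provided it can traverse the uncovered part $4-2a$ of the perimeter within the dwell, that is $(4-2a)/v<2a$, i.e. $v>v_0=(2-a)/a$. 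This gives $T_0(v,a)<+\infty$ for $v>v_0$, and as $v\to+\infty$ the catching delay, bounded by $(4-2a)/v\to 0$, yields $T_0(v,a)\to\sqrt2(1-2a)$, which is $\max(\sqrt2(1-2a),0)$ for $a\in(0,1/2)$.

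For the second statement I would exhibit, for small $a$, a periodic ray that $\omega(t)$ never meets. Fix coprime $p,q\geq 1$ with $v\in\sqrt{p^2+q^2}\,\Q$ (reducing to coprime $p,q$ is harmless, since $\sqrt{p^2+q^2}\,\Q$ is unchanged when $p,q$ are scaled by a common factor). A ray with unfolded direction $(p,q)/\sqrt{p^2+q^2}$ is periodic, and its successive boundary contacts occur at times forming arithmetic progressions of steps $\sqrt{p^2+q^2}/p$ and $\sqrt{p^2+q^2}/q$, both rational multiples of $\sqrt{p^2+q^2}$. The center of $\omega(t)$ sits at arc-length $vt\bmod 4$ on $\partial\Omega$, and the hypothesis $v\sqrt{p^2+q^2}\in\Q$ (equivalent to $v\in\sqrt{p^2+q^2}\,\Q$, since $1/\sqrt{p^2+q^2}=\sqrt{p^2+q^2}/(p^2+q^2)$) forces these center positions, evaluated at the contact times, to take only finitely many values. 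As the periodic ray has only finitely many contact points, the whole ray--domain interaction is periodic in time and reduces to finitely many incidence conditions. I would then choose the ray within its slope class (its transverse offset, equivalently its phase relative to the fixed motion of $\omega$) so that each of its finitely many contact points stays at a positive distance $\delta>0$ from the matching center position at the corresponding contact time; this is possible because the exceptional offsets, for which some contact point meets its matched center position, form a finite set. Taking $a_0<\delta$ small enough that moreover the ray meets the band $\Omega_a$ only in short windows around its contacts and the center moves by only $O(a)$ during each such window, the ray avoids $\omega(t)$ for all $t>0$, whence $T_0(v,a)=+\infty$ for $a\in(0,a_0)$.

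The main obstacle is the second statement, specifically making the commensurability argument fully rigorous: one must verify that $v\in\sqrt{p^2+q^2}\,\Q$ really renders the sequence of pairs (contact point, center position at contact time) periodic, and then produce a ray realizing strict separation while respecting the fixed initial phase of $\omega$ (centered at the corner $(0,0)$ at $t=0$). A secondary technical point, shared with the disk and sphere analyses, is to control the short time-windows near each contact so that the finitely many strict separations at the contact instants survive after thickening $\omega(t)$ to width $a$ and accounting for its $O(a)$ displacement during each window; this is routine once $a_0$ is chosen small. The first statement is comparatively soft, its only genuinely geometric inputs being the minimal dwell time $2a$ and the length $\sqrt2(1-2a)$ of the diagonal of the central hole.
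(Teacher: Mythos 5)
Your overall strategy coincides with the paper's: part 1 is the same dwell-versus-sweep estimate (minimal dwell time $2a$ in the boundary band against the time $\omega(t)$ needs to sweep the perimeter), with the limiting regime identified as the static $a$-thick strip of control time $\sqrt{2}(1-2a)$; part 2 is the same unfolding argument producing, for rational slope $p/q$, periodic rays whose boundary-contact times lie in arithmetic progressions of steps $\sqrt{p^2+q^2}/p$ and $\sqrt{p^2+q^2}/q$, so that the hypothesis $v\in\sqrt{p^2+q^2}\,\Q$ renders finite both the set of contact points and the set of positions of the moving point at contact times, after which one separates these two finite sets and thickens to small $a>0$. The small discrepancy in part 1 (you sweep $4-2a$ where the paper sweeps the full perimeter plus the ray's lateral drift $2a$ during its dwell) only affects the inessential constant $v_0$, and the paper's own bookkeeping there is equally crude.

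The one substantive deviation is in how the finitely many incidence conditions are avoided in part 2, and your version has a genuine soft spot. The paper fixes the spatial data $(x_0,y_0)$ of the ray and varies only the initial time $t_0$: the contact-point set $M$ is then independent of $t_0$, while the set $L$ of matched center positions translates along $\partial\Omega$ at speed $v\neq 0$, so the bad values of $t_0$ form a discrete set and $M\cap L=\emptyset$ is immediate. You instead vary the transverse offset and assert that \emph{the exceptional offsets form a finite set}. That assertion is unjustified and can fail: under an offset variation both the contact points and the contact times move, each collision condition becomes the equality of two affine functions of the offset, and their slopes can coincide precisely for speeds in the hypothesis class. For instance, for slope-one rays the arclength coordinate of a vertical-side contact moves at rate $\sqrt{2}$ per unit perpendicular offset while the matched center position moves at rate $v\cdot dt/du=v$, so for $v=\sqrt{2}\in\sqrt{2}\,\Q$ these conditions are offset-independent: either no offset or every offset in a whole interval collides, depending on a constant you do not control. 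Your parenthetical ``equivalently its phase relative to the fixed motion of $\omega$'' conflates two inequivalent parameters; the time phase is the correct one and repairs the step, exactly as the paper does. Finally, your reduction to coprime $p,q\geq 1$ silently drops the degenerate case $pq=0$, i.e.\ $v\in\Q$, which the paper disposes of first via $2$-periodic rays bouncing between opposite sides (alternatively, note $\Q=\sqrt{3^2+4^2}\,\Q$ and use your general construction).
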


\begin{proof}[Proof of Proposition \ref{prop_square}.]
  The argument for the first property is the same as that developed
  in the other examples. For $v>0$ large, the situation becomes
  intuitively as if we have a static control domain that forms a
  $a$-thick strip along the boundary of the square. For this case,
  the geometric control time is $\sqrt{2} (1-2a)$ if $a \in (0,1/2)$
  and $0$ otherwise. 
  More precisely, if a ray enters this strip, it remains in it at
  least for a time $2a$. During such time, it travels at most a
  lateral distance equal to $2a$ (wave speed is one). If during that time the control
  domain goes all around and travels also the additional $2a$
  distance, we can be sure that this ray will be ``caught'' by the
  moving domain. For $v> v_0 =  (4 +2a)/2a = (2 -a)/a$,
  the control time can thus be estimated by $\max(\sqrt{2}(1-2a), 0) \leq T_0(v,a)
  \leq \max(\sqrt{2}(1-2a), 0)+ (4+2a)/v$. Hence, the announced asymptotics.

For the second property, 
as for the other examples, considering $a$ small at first, and thus
the set $\omega(t)$  to be a simple point running along the boundary, greatly helps intuition. 
We start by considering $2$-periodic rays that bounce back and forth between two
opposite sides of the square. They reflect at boundaries at times $t_k
= t_0 + k$, $k\in \Z$, for some $t_0\in\R$. If $v=\frac{p}{q}$ is rational, then the
positions of  the ``moving point'' $\omega(t)$ at times $t_k$ range
over a finite number of points. One can thus easily identify
$2$-periodic rays that never meet that moving point. This property
remains true if  $a>0$ is chosen \suff small.

Let us consider more general periodic rays. All rays propagating in
the square can be described as follows. Let $(x_0,y_0)\in[0,1]^2$ be
arbitrary. Let us consider a ray $t\mapsto (x(t),y(t))$ starting from
$(x_0,y_0)$ at time $t=t_0$, with a slope
$\tan (\alpha) \in \ovl{\R}$, for some $\alpha \in (-\pi,
\pi]$.
Setting $c= \cos(\alpha)$ and $s = \sin(\alpha)$, we define 
$\tilde{x}(t) = x_0+(t-t_0) c$ and $\tilde{y}(t) =y_0+(t-t_0)s$.  Then, for times
$t\in \R$ such that $|t - t_0|$ is small (possibly only if $t\leq t_0$
or $t \geq t_0$), the ray is given by
$t \mapsto (\tilde{x}(t),\tilde{y}(t) )$. 
Introducing $\hat{x}(t), \hat{y}(t) \in [0,2)$ such that $\hat{x}(t)
=\tilde{x}(t)  \mod 2$ and $\hat{y}(t)
=\tilde{y}(t)  \mod 2$
it can be
seen, by ``developing the square" by means of plane
symmetries, that the ray is given by
\begin{equation}
  \label{eq: def x(t) y(t) mod 2}
x(t) = \begin{cases}
\hat{x}(t)& \textrm{if $0\leq \hat{x}(t)\leq 1$}, \\
2- \hat{x}(t) & \textrm{if $1< \hat{x}(t)<2$},
\end{cases}
\qquad 
y(t) = \begin{cases}
\hat{y}(t)& \textrm{if $0\leq \hat{y}(t)\leq 1$}, \\
2- \hat{y}(t) & \textrm{if $1< \hat{y}(t)<2$}.
\end{cases}
\end{equation}
A ray is periodic if and only if
$\tan(\alpha) =p/q\in\Q \cup \{+\infty, -\infty\}$, with $p$ and $q$
relatively prime integers (including the case $q=0$). The period
is equal to $2\sqrt{p^2+q^2}$. In this case, we have
$c=q / \sqrt{p^2+q^2}$ and $s=p /\sqrt{p^2+q^2}$.  
Such a  ray reflects from the boundary of the square at times $t$ in the
union of the following (possibly empty) subsets of $\R$:
\begin{align*}
  A_x = t_0 + \{ (k-x_0)/c\ \mid\ k \in \Z \}, \qquad A_y  = t_0 + \{ (k-y_0)/s\ \mid\ k \in \Z \}.
\end{align*}
The set  $M = M(x_0,y_0, p,q)$ of associated  points where this ray meets the boundary
is then finite and independent of $t_0$.

 If now $v =   r \sqrt{p^2+q^2}$ with $r\in \Q^+$, at times
 $t \in A_x \cup A_y$,  the ``moving point'' $\omega(t)$ ranges over a
 finite set $L = L(x_0, y_0, t_0,p,q,r)$ of points on the boundary of the square,
 as the accumulated distance travelled along the boundary of the square $(0,1)^2$ is of the
 form
 $d_k = t_0 v + (k-x_0)  r (p^2 + q^2)/q$ or $d_k' = t_0 v + (k-y_0)
 r (p^2 + q^2)/p$ and simply
 needs to be considered modulo 4. Adjusting the value of the 
 time $t_0$, we can enforce $M \cap  L = \emptyset$. Hence, the
 associated ray never meets the moving point $\omega(t)$. Finally, as
 the number of points is finite, this  property
remains true if  $a>0$ is chosen \suff small.
\end{proof}


\begin{remark}
  If $\tan(\alpha)\in\R\setminus\Q$, then the
  set of points at which the corresponding ray reflects at the boundary
  $\d\Omega$ is dense in $\d\Omega$. In fact at such a point, using
  the parametrization given in the proof above,   we
  have either $\tilde{x}(t) = x_0 + (t-t_0) \cos(\alpha)\in\Z$, or
  $\tilde{y}(t) = y_0 + (t-t_0) \sin(\alpha)\in\Z$. For instance,
  if $\tilde{x}(t)=2k\in\Z$, that is, $x(t) =0$, meaning that we consider point on the
  left-hand-side vertical side of the square, then the corresponding $\tilde{y}(t)$ satisfies
  $\tilde{y}(t)  = (y_0 - x_0 \tan(\alpha)) + 2 k \tan(\alpha)$. 
  Using the fact that the set  that $\{ 2k \beta \mod 2\ \ \mid\ k\in\Z\}$ is dense in $[0,2]$ if and only if
  $\beta\in\R\setminus\Q$, we conclude that $\hat{y}(t) = \tilde{y}(t) \mod 2$ is
  dense in $[0,2]$ and thus $y(t)$ is dense in $[0,1]$ considering~\eqref{eq: def x(t) y(t) mod 2}.
  From this density, we deduce that the analysis in the case
  $\tan(\alpha)\in\R\setminus\Q$ may be quite intricate.
\end{remark}

\subsection{An open question}
Let $\Omega$ be a domain of $\R^d$. Let $\omega_0$ be a small disk in
$\Omega$, of center $x_0\in\Omega$ and of radius $\eps>0$.  Let $v>0$
arbitrary.  Given a path $t\mapsto x(t)$ in $\Omega$, we define
$\omega(t)=B(x(t),\eps)$ (an open ball), with $x(0)=x_0$.  We say that
the path $x(\cdot)$ is admissible if $\omega(t)\subset\Omega$ for
every time $t$.  We raise the following question:
\begin{align}
\tag{$\star$}
\begin{array}{l}
\text{Do there exist $T>0$ and an admissible $\Con^1$ path $t\mapsto
  x(t)$ in $\Omega$, with speed less than or}\\ 
\text{equal to $v$, such that $(Q,T)$ satisfies the $t$-GCC?}
\end{array}
\label{open question}
\end{align}
Here, we have set
$Q = \{ (t,x)\in\R\times\Omega\ \mid\ t\in\R,\ x\in\omega(t)\}$.  Of
course, many variants are possible: the observation set is not
necessarily a ball, its velocity may be constant or not. The examples
of Sections~\ref{sec: ex dim 1}--\ref{sec_square} have shown that addressing the question
\eqref{open question} is far from obvious.  Assumptions on the domain
$\Omega$ could be made; for instance, one may wonder whether
ergodicity of $\Omega$ may help or not.

Note that, above in \eqref{open question}, we restrict the speed of $t\mapsto x(t)$. In fact, using arguments as in the beginning of both the proofs of Proposition \ref{prop_disk} and Proposition \ref{prop_square}, if $t\mapsto x(t)$ is periodic and if $\cup_{t\in\R} B(x(t),\eps)$ satisfies the ``static" Geometric Control Condition, then for a sufficiently large speed of the moving point, the set $Q$ satisfies the $t$-GCC. An estimate of the minimal speed can be derived from the inner diameter of $\cup_{t\in\R} B(x(t),\eps)$.

\section{Boundary observability and control}
\label{sec: boundary control}
In this section, we briefly extend our main results to the case of boundary observability. We consider the framework of Section \ref{sec_1.2}, and we assume that $\d\Omega$ is not empty. We still consider the wave equation \eqref{waveEq}, and we restrict ourselves, for simplicity, to Dirichlet conditions along the boundary.

Let $R$ be an open subset of $\R\times\d\Omega$. We set
$$
\Gamma(t) = \{ x\in\d\Omega\ \mid\  (t,x)\in R\} .
$$
We say that the Dirichlet wave equation is observable on $R$ in time $T$ if there exists $C>0$ such that
\begin{equation}\label{obsfrontiere}
C \Vert (u(0,\cdot),u_t(0,\cdot))\Vert_{H_0^1(\Omega)\times L^2(\Omega)}^2
\leq \int_0^T\!\!\int_{\Gamma(t)} \left|\frac{\d u}{\d n}(t,x)\right|^2 \,d\mathcal{H}^{n-1}\, dt,
\end{equation}
for all solutions of \eqref{waveEq} with Dirichlet boundary conditions. Here, 
$\mathcal{H}^{n-1}$ is the $(n-1)$-Hausdorff measure.

In the static case (that is, if $\Gamma(t)\equiv\Gamma$ does not
depend on $t$), the observability property holds true as soon as
$(\Gamma,T)$ satisfies the following GCC (see \cite{BardosLebeauRauch,BurqGerard}):
\begin{quote}
{\it
  Let $T>0$. The open set $\Gamma \subset \d\Omega$ satisfies the
  Geometric Control Condition if the projection onto $M$ of every
  (compressed) generalized bicharacteristic meets $\Gamma$ at a time
  $t \in (0,T)$ at a nondiffractive point.
}
\end{quote}
Recall the definition of nondiffractive points given in Section~\ref{sec: Gk}. 
The $t$-GCC is then defined similarly to Definition \ref{def_tGCC}.

\begin{definition}
  Let $R$ be an open subset of $\R \times \d \Omega$, and let $T>0$.
  We say that $(R,T)$ satisfies the \emph{time-dependent Geometric
    Control Condition} (in short, $t$-GCC) if every compressed
  generalized bicharacteristic
  $\comp \gamma: \R \ni s \mapsto (t(s), x(s), \tau(s), \xi(s)) \in
  \comp T^\ast Y \setminus E$
  is such that there exists $s \in \R$ such that $t(s)\in (0,T)$ and
  $(t(s), x(s)) \in R$ and
  $\comp \gamma(s) \in H \cup G \setminus G_d$ (a hyperbolic point
  or a glancing yet nondiffractive point).
 
  We say that $R$
  satisfies the $t$-GCC if there exists $T>0$ such that $(R,T)$ satisfies
  the $t$-GCC.

  The control time $T_0(R,\Omega)$ is defined by
  \begin{equation*}
    T_0(R,\Omega) = \inf \{ T>0\ \mid\ (R,T)\ \text{satisfies the $t$-GCC} \} ,
  \end{equation*}
  with the agreement that $T_0(R,\Omega)=+\infty$ if $R$ does not
  satisfy the $t$-GCC.
\end{definition}

\begin{theorem}
Let $R$ be an open subset of $\R \times \d \Omega$ that satisfies the $t$-GCC. Let $T>T_0(R,\Omega)$.
We assume moreover that no
(compressed) generalized bicharacteristic has a contact of infinite order with
$(0,T)\times\d\Omega$, that is, $G^\infty  = \emptyset$.
Then, the observability inequality \eqref{obsfrontiere} holds.
\end{theorem}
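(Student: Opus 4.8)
The plan is to follow the same three-step scheme used in the proof of Theorem~\ref{mainthm}, now adapted to the boundary observation term $\int_0^T \!\!\int_{\Gamma(t)} |\d_n u|^2\, d\mathcal{H}^{n-1}\, dt$. The essential new ingredient is the passage from an interior microlocal defect measure to a boundary measure carried by the normal-derivative trace, together with the fact that this boundary measure degenerates precisely at diffractive glancing points. This degeneracy is exactly what dictates that the boundary $t$-GCC requires contact at a point of $H \cup (G \setminus G_d)$, and it is the feature distinguishing this statement from the interior Theorem~\ref{mainthm}.

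First I would establish a weak (penalized) observability inequality, the boundary analogue of Lemma~\ref{lemma_weak_obs}:
\[
C \Norm{(u^0,u^1)}{H^1_0(\Omega) \times L^2(\Omega)}^2 \leq \int_0^T \!\!\int_{\Gamma(t)} \left| \d_n u \right|^2 d\mathcal{H}^{n-1}\, dt + \Norm{(u^0,u^1)}{L^2(\Omega) \times H^{-1}(\Omega)}^2.
\]
Arguing by contradiction, I take a normalized sequence $(u_n^0,u_n^1)$ converging weakly to zero for which both the boundary observation and the weaker norm tend to zero. I extract an interior microlocal defect measure $\mu$ on $S^\ast \hat\Sigma$ as in Proposition~\ref{eq: existence measure}, and I associate to the boundary traces $\d_n u_n|_{\R \times \d\Omega}$ a microlocal defect measure $\nu$ on the boundary cotangent bundle. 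The key relation, from the theory of \cite{BardosLebeauRauch,BurqGerard,Lebeau:96}, is that over the hyperbolic set $H$ the boundary measure $\nu$ equals a positive multiple of the trace of $\mu$, the factor being essentially $-r_0 = \eta_n^2 >0$, whereas over the diffractive glancing set $G_d$ this factor vanishes, so $\nu$ carries no mass there. Since the observation term tends to zero, $\nu$ vanishes over $j(T^\ast R)$; restricted to the hyperbolic and nondiffractive glancing points this forces $\mu$ to vanish there as well. Invariance of $\mu$ under the compressed generalized bicharacteristic flow \cite{Lebeau:96,BurqLebeau}, combined with the boundary $t$-GCC (every ray meets $R$ at a point of $H \cup (G \setminus G_d)$ within $(0,T)$), then propagates the vanishing of $\mu$ along every ray, so $\mu \equiv 0$ and $u_n \to 0$ strongly in $H^1((0,T) \times \Omega)$. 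Conservation of energy yields the contradiction, exactly as in Lemma~\ref{lemma_weak_obs}.

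Second, I would define the set of invisible solutions $N_T$ as the solutions $v$ of the Dirichlet wave equation with $\d_n v = 0$ on $R$, and prove $N_T = \{0\}$ following Lemma~\ref{lemNT}: propagation of singularities together with the $t$-GCC yields smoothness, the weak inequality yields finite-dimensionality through the compact embedding, and $N_T$ is $\d_t$-invariant, hence it contains an eigenfunction $v = e^{\lambda t} w(x)$ with $(\lambda^2 - \triangle_g)w = 0$. The crucial point is that $w$ now has vanishing Cauchy data on the open set $\Gamma(t) \subset \d\Omega$ for some $t$: the Dirichlet condition gives $w|_{\d\Omega}=0$ and $\d_n v = 0$ gives $\d_n w = 0$ on $\Gamma(t)$. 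Unique continuation from the Cauchy data for the elliptic operator $\lambda^2 - \triangle_g$ then forces $w \equiv 0$, hence $N_T = \{0\}$. Finally, the compact term in the weak inequality is removed by the same contradiction argument as at the end of the proof of Theorem~\ref{mainthm}, which yields \eqref{obsfrontiere}.

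The main obstacle is the first step, and specifically the rigorous identification of the boundary measure $\nu$ of the normal-derivative trace with the interior measure $\mu$ near $\R \times \d\Omega$. Making this precise requires the full Melrose--Sj\"ostrand machinery of propagation up to and along the boundary and a careful treatment of the glancing region; the vanishing of the proportionality factor at diffractive points is precisely what imposes the nondiffractive requirement in the boundary $t$-GCC. As in the interior case, the simplification of Remark~\ref{rem: simplification BLR} — replacing the delicate second step of \cite{BardosLebeauRauch,BurqGerard} by the short argument that $N_T = \{0\}$ — is what makes the time-dependent observation region tractable.
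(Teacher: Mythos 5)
Your overall scheme coincides with the paper's: the paper proves this theorem by running the same three steps as for Theorem~\ref{mainthm}, with $N_T$ redefined by $\chi_R\,\d_n u=0$ and the same ending (finite dimensionality, $\d_t$-invariance, an eigenfunction $v=e^{\lambda t}w$, and elliptic unique continuation from the vanishing Cauchy data $w=\d_n w=0$ on $\Gamma(t)$). However, there is a genuine gap at the one point where the boundary case requires a new idea, namely the smoothness of invisible solutions in your second step. In the interior case the seed of regularity is free: $\chi_Q\,\d_t v=0$ makes $v$ smooth on the open space-time set $Q$, and Melrose--Sj\"ostrand propagation does the rest. Here the data $\chi_R\,\d_n u=0$ lives on a hypersurface and produces no open set where $u$ is known to be smooth, so your assertion that ``propagation of singularities together with the $t$-GCC yields smoothness'' is not an application of any cited theorem: the propagation theorem only transports $WF_b$ along the compressed flow and needs, on each ray, a point of already-established microlocal regularity. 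The paper manufactures that seed explicitly: near the nondiffractive point $\comp\gamma_0$ where the ray meets $R$, both traces vanish ($u=\d_n u=0$ on $V_\d\subset R$), so the extension $\udl{u}$ of $u$ by zero across the boundary satisfies $P\udl{u}=0$ in a full neighborhood $V$ of $(t_0,x_0)$ in $\R\times M$; nondiffractivity guarantees that the free bicharacteristic through $\comp\gamma_0$ has points outside $\ovl{\Omega}$, where $\udl{u}\equiv 0$ is trivially smooth, so interior propagation of singularities applied to $\udl{u}$ gives microlocal smoothness at $\comp\gamma_0$, which Melrose--Sj\"ostrand then carries back to $\rho$. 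This extension-by-zero argument is the genuinely new ingredient of the boundary proof, and it is absent from your proposal.

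There is also a factual error in your first step. You claim the factor relating the boundary measure $\nu$ of $\d_n u_n$ to the trace of $\mu$ is $-r_0=\eta_n^2$, positive on $H$ and ``vanishing on $G_d$''; but $-r_0$ vanishes on the \emph{whole} glancing set $G$ (that is the definition of $G$), gliding points included. Hence the vanishing of $\nu$ over $j(T^\ast R)$ forces $\mu$ to vanish only at hyperbolic points, and your inference that it kills $\mu$ at nondiffractive glancing points does not follow; the gliding set $G\setminus G_d$ requires a separate propagation/flux argument (or again an extension-type device), which is precisely part of what makes the boundary measure analysis delicate in \cite{BardosLebeauRauch,BurqGerard,Lebeau:96}. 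Relatedly, you locate ``the main obstacle'' in the first step, whereas the paper treats that step as a known adaptation and supplies new content exactly where your proof is silent: the microlocal regularity of invisible solutions at nondiffractive points of $R$.
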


\begin{proof}
The proof is similar to the proof of Theorem \ref{mainthm} done in
Section \ref{subsec_proofmainthm}. We just point out that the set of invisible solutions is defined by
$$
N_T = \big\{ u \in H^1((0,T)\times\Omega) \ \mid\ u \ \text{is a Dirichlet
  solution of \eqref{waveEq}, and}\ \chi_R\,\frac{\d u}{\d n}=0 \big\} .
$$
If $u \in N_T$ and $\rho \in T^\ast((0,T)\times \Omega)$, we wish to
prove that $u$ is smooth at $\rho$. Let
$\comp \gamma(s) = (x(s), t(s), \xi(s), \tau(s))$ be the compressed
bicharacteristic that originates from $\rho$ (at $s=0$).  There exists
$s_0 \in \R$ such that $t(s_0) \in (0,T)$ and
$(t_0,x_0) = (t(s_0), x(s_0)) \in R$.  To fix ideas, let us assume
that $s_0 \geq 0$ and let us set
$\comp \gamma_0 = \comp \gamma(s) \in T^\ast \d Y$.  Because of
the $t$-GCC, we may assume that $\comp \gamma_0$ is a nondiffractive
point. Note that the case $s_0 \leq 0$ can be treated similarly.

Let now $V$ be an open \nhd of  $(t_0,x_0)$ in $\R \times M$ such
that $V_\d = V \cap  (\R \times \d\Omega) \subset R$. 
In $V$, we extend the function $u(t,x)$ by zero outside $\R\times \Omega$ and
denote this extension by $\udl{u}$. 
Since $u_{|V_\d} = \d_n u_{|V_\d} =0$ we observe that $\udl{u}$ solves
$P \udl{u}=0$ in $V$. 
As $\gamma_0$ is nondiffractive, the natural
bicharacteristic associated with $\gamma_0$ has points outside
$\Omega$ in any \nhd of $\gamma_0$. By propagation of singularities for $\udl{u}$ we thus find that
$\udl{u}$ is smooth at $\gamma_0$. Then, by propagation of singularities
along the compressed generalized bicharacteristic flow (see 
\cite{MelroseSjostrand1,MelroseSjostrand2,Hoermander:V3}), we find that
$u$ is smooth at $\rho$. 
Having $u$ smooth in $(0,T) \times \Omega$, we see that if $u\in N_T$ then $\d_t u\in N_T$. The rest of the proof follows.
\end{proof}

\begin{remark}
By duality, we have, as well, a boundary controllability result, as in
Theorem~\ref{cor1}.
\end{remark}

\begin{remark}
It is interesting to analyze, in this context of boundary observability, the examples of the disk and of the square studied in Section \ref{sec_examples}.
\begin{itemize}
\item For the disk (see Section \ref{sec_disk}): we set
$$
\Gamma(t) = \{ (r,\theta)\in [0,1]\times\R\ \ \mid\  r=1,\ vt<\theta<vt+a \}.
$$
This is the limit case of the case of Section \ref{sec_disk}, with $\eps=0$.
With respect to Proposition \ref{prop_disk}, it is not true anymore
that $T_0(v,a)<+\infty$ if $v$ is chosen \suff large. 

The other items of Proposition \ref{prop_disk}, providing sufficient conditions such that $T(v,a)=+\infty$, are still valid.

\item For the square (see Section \ref{sec_square}): $\omega(t)$ is the translation of the segment $(0,2a)$ along the boundary, moving anticlockwise and with constant speed $v$.

With respect to Proposition \ref{prop_square}, it is not true anymore that $T_0(v,a)<+\infty$ if $v$ is large enough. The second item of Proposition \ref{prop_square}, providing a sufficient condition such that $T(v,a)=+\infty$, is still valid.
\end{itemize}
We stress that, in Propositions \ref{prop_disk} and \ref{prop_square}, the fact that $T_0<+\infty$ for $v$ large enough was due to the fact that the width of the observation domain is positive.
This remark shows that observability is even more difficult to realize with moving observation domains located at the boundary.
\end{remark}

\appendix
\section{A class of test operators near the boundary}\label{appendice}
We denote by $\Psi^m(Y)$ the space of operators of the form $R = R_{\text{int}} +
R_{\text{tan}}$ where:
\begin{itemize}
  \item 
    $R_{\text{int}}$ is a classical pseudodifferential operator of order $m$
    with compact support in $\R \times \Omega$, that is, satisfying
    $R_{\text{int}} = \varphi R_{\text{int}} \varphi$ for some
    $\varphi \in \Cinfc(\R \times \Omega)$. 
  \item $R_{\text{tan}}$ is a classical tangential pseudodifferential operator
    of order $m$. In the local normal
    geodesic coordinates introduced in Section~\ref{sec: generalized
      bichar} such an operator only acts in the $y'$ variables. 
\end{itemize}
If $\sigma(R_{\text{int}})$ and $\sigma(R_{\text{tan}})$ denote the
homogeneous principal symbols of $R_{\text{int}}$ and $R_{\text{tan}}$
respectively, we observe that their restriction to $\Char_Y(p) \cup
T^\ast (\R \times \d\Omega)$ is
well defined. Then, by means of the map $j: T^\ast Y \to \comp T^\ast
Y$, the function $\sigma(R_{\text{int}})_{|\Char_Y(P)} +
\sigma(R_{\text{tan}}) _{|\Char_Y(P) \cup
T^\ast (\R \times \d\Omega)}$ yields a {\em continuous} map
on $\hat \Sigma = j (\Char_Y(p)) \cup E$, that we denote by $\kappa(R)$. 
Its homogeneity yields a continuous function on $S^\ast \hat \Sigma =
\hat \Sigma / (0,+\infty)$. 

We then have the following proposition (see \cite{Lebeau:96,BurqLebeau}).
\begin{proposition}
  \label{eq: existence measure}
  Let $(u_n)_{n \in \N}$ be a bounded sequence of $H^1(\R\times \Omega)$ that
  satisfies $(\d_t^2 - \triangle_g)u_n =0$ and weakly converges to $0$. 
  Then, there exist a subsequence $(u_{\varphi(n)})_{n \in \N}$ and  a positive measure $\mu$ on $S^\ast \hat \Sigma$
  such that 
  $$
  (R u_{\varphi(n)}, u_{\varphi(n)})_{H^1(\Omega)}\xrightarrow[n\rightarrow+\infty]{}
  \langle \mu, \kappa(R)\rangle, \qquad R \in \Psi^0(Y).
  $$
\end{proposition}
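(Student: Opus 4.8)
The plan is to realize $\mu$ as the Riesz representation of a positive linear functional obtained as the limit of the quadratic forms $R \mapsto (R u_n, u_n)_{H^1}$, following the construction of microlocal defect measures of Tartar and Gérard in the boundary-adapted version of \cite{Lebeau:96,BurqLebeau}. First I would establish boundedness and extract a subsequence: since every $R \in \Psi^0(Y)$ is bounded on $H^1$, one has $|(R u_n, u_n)_{H^1}| \leq C \Norm{R}{\mathcal L(H^1)}\, \sup_n \Norm{u_n}{H^1}^2$. Choosing a countable family of operators whose principal symbols are dense, for the uniform norm, in $\Con^0(S^\ast \hat\Sigma)$, and combining a diagonal extraction with this uniform bound, one obtains a subsequence $(u_{\varphi(n)})$ along which $(R u_{\varphi(n)}, u_{\varphi(n)})_{H^1}$ converges for every $R \in \Psi^0(Y)$; call the limit $L(R)$.

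Second, I would show that $L(R)$ depends only on the principal symbol $\kappa(R)$. If $\kappa(R)=0$ then, modulo a smoothing remainder, $R$ is a sum of an interior operator and a tangential operator, each of order $-1$, together with a contribution microsupported in the elliptic region $E$. When localized in time, operators of order $-1$ are compact from $H^1$ to $H^1$ by the compact Sobolev embedding, so $R u_{\varphi(n)} \to 0$ strongly and $L(R)=0$; here the weak convergence $u_n \rightharpoonup 0$ is used in an essential way. The contribution from $E$ is controlled because, where $p\neq 0$, elliptic regularity for $(\d_t^2-\triangle_g)u_n=0$ confines the defect to $\Char_Y(p)$, the set $E$ being retained in $\hat\Sigma$ only to produce a compact base carrying continuous symbols. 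Thus $L$ factors through $\kappa$, defining a linear functional $\ell$ on $\Con^0(S^\ast\hat\Sigma)$.

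Third comes positivity and representation. For $R$ with $\kappa(R)\geq 0$ I would invoke a sharp Gårding inequality, in its interior form over $\R\times\Omega$ and in its tangential, boundary-adapted form near $\R\times\d\Omega$, to get $(R u_n, u_n)_{H^1} \geq -C\Norm{u_n}{H^{1/2}_{\mathrm{loc}}}^2$. Since $u_n \rightharpoonup 0$ in $H^1$ forces $u_n \to 0$ strongly in $H^{1/2}_{\mathrm{loc}}$, the right-hand side tends to $0$, so $\ell(\kappa(R)) = L(R) \geq 0$. A positive linear functional on $\Con^0$ of the (locally) compact space $S^\ast\hat\Sigma$ is then represented, by the Riesz representation theorem, by a positive Radon measure $\mu$, which is the asserted object.

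The main obstacle is the treatment of the boundary, and it has two linked aspects. The first is the patching: the interior defect lives on $j(\Char_Y(p))$ over $\R\times\Omega$, whereas near $\R\times\d\Omega$ the natural object is a tangential measure on $T^\ast(\R\times\d\Omega)$, and these must assemble into a single measure on $S^\ast\hat\Sigma$. It is precisely the compression map $j$ and the resulting continuity of $\kappa(R)$ across $\{y_n=0\}$, established in the paragraph preceding the statement, that make this possible. The second aspect is the Gårding inequality up to the boundary: one must absorb the boundary terms produced by integration by parts against the Dirichlet or Neumann condition, so that tangential symbol positivity yields operator positivity modulo terms vanishing in the limit. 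These technical steps are carried out in \cite{Lebeau:96,BurqLebeau}, and I would invoke them rather than reproduce them, noting that the trace and energy estimates on which they rest are available in both the Dirichlet and the Neumann case.
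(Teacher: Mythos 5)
The paper does not actually prove Proposition~\ref{eq: existence measure}: it is quoted directly from \cite{Lebeau:96,BurqLebeau} (extending the interior theory of \cite{Gerard,Tartar}), and your outline --- diagonal extraction over a countable symbol-dense family, vanishing of the limit functional when $\kappa(R)=0$, interior and tangential sharp G\aa rding inequalities combined with the strong convergence $u_n\to 0$ in $H^{1/2}_{\mathrm{loc}}$ for positivity, then Riesz representation on $\Con^0(S^\ast\hat\Sigma)$ --- is precisely the construction carried out in those references, so it is essentially the same approach as the one the paper invokes. The only imprecision is your claim that the tangential order $-1$ remainders are compact on $H^1$ by Sobolev embedding alone (a tangential operator gains regularity only in the $y'$ variables, so Rellich does not apply directly); as in \cite{Lebeau:96,BurqLebeau}, one uses the equation, \ie\ $\d_{y_n}^2 u_n = -r(y,D_{y'})u_n$ microlocally, to trade normal for tangential regularity --- a step you in any case delegate to the references.
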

This extends results in the interior introduced in the
seminal works \cite{Gerard,Tartar}.

\addcontentsline{toc}{section}{References}


\begin{thebibliography}{99}

\small 

\bibitem{AL}
N. Anantharaman and M. L\'eautaud, 
\textit{Sharp polynomial decay rates for the damped wave equation on the torus},
With an appendix by St\'ephane Nonnenmacher,
Anal. PDE {\bf 7} (2014), no. 1, 159--214.

\bibitem{BardosLebeauRauch}
C. Bardos, G. Lebeau, and J. Rauch,
\textit{Sharp sufficient conditions for the observation, control, and stabilization of waves from the boundary},
SIAM J. Control Optim. {\bf 30} (1992), no. 5, 1024--1065.

\bibitem{Burq:98}
N. Burq,
\textit{D\'ecroissance de l'\'energie locale de l'\'equation des ondes
  pour le probl\`eme extérieur et absence de r\'esonance au voisinage
  du r\'eel},  Acta Math.  {\bf 180} (1998), no. 1, 1-–29. 

\bibitem{BurqGerard}
N. Burq and P. G\'erard,
\textit{Condition n\'ecessaire et suffisante pour la contr\^olabilit\'e exacte des ondes (French) [A necessary and sufficient condition for the exact controllability of the wave equation]},
C. R. Acad. Sci. Paris S\'er. I Math. {\bf 325} (1997), no. 7, 749--752.

\bibitem{BurqHitrik}
N. Burq and M. Hitrik,
\textit{Energy decay for damped wave equations on partially rectangular domains},
Math. Res. Lett. {\bf 14} (2007), no. 1, 35--47. 

\bibitem{BurqLebeau}
N. Burq and  G. Lebeau, 
\textit{Mesures de d\'efaut de compacit\'e, application au syst\`eme de {L}am\'e}, 
Ann. Sci. \'Ecole Norm. Sup. (4) {\bf 34} (2001), no. 6, 817--870.

\bibitem{Castro}
C. Castro, A. M\"unch, and N. Cindea,
\textit{Controllability of the linear one-dimensional wave equation with inner moving forces},
SIAM J. Control Optim. {\bf 52} (2014), no. 6, 4027--4056. 

\bibitem{Gerard}
P. G\'erard,
\textit{Microlocal defect measures},
Comm. Partial Differential Equations {\bf 16} (1991), 1761--1794.

\bibitem{Haraux}
A. Haraux,
\textit{Une remarque sur la stabilisation de certains syst\`emes du deuxi\`eme ordre en temps},
Portugal. Math. {\bf 46} (1989), no. 3, 245--258.


\bibitem{Hoermander:V3} 
L. H\"ormander, 
\textit{The Analysis of Linear Partial Differential Operators},
Vol. 3, Springer-Verlag (1985).

\bibitem{Lebeau:96}
G.~Lebeau, 
\textit{\'{E}quation des ondes amorties},
Algebraic and geometric methods in mathematical physics
              ({K}aciveli, 1993), Math. Phys. Stud. 19, 73--109,
              Kluwer Acad. Publ., Dordrecht, 1996.

\bibitem{LR:97}
G.~Lebeau and L.~Robbiano, \textit{Contr\^ole exact de l'\'equation de
  la chaleur},  Comm. Partial Differential Equations {\bf 20} (1995), 335–-356.

\bibitem{lions} J.-L. Lions,
\textit{Contr\^olabilit\'e Exacte, Perturbations et Stabilisation de
  Syst\`emes Distribu\'es}, Tome 1, Recherches en Math\'ematiques
Appliqu\'ees [Research in Applied Mathematics], Masson (1988).

\bibitem{lions2} J.-L. Lions,
\textit{Exact controllability, stabilizability and perturbations for distributed systems},
SIAM Rev. {\bf 30} (1988), 1--68.


\bibitem{MelroseSjostrand1}
R.B. Melrose and J. Sj\"ostrand,
\textit{Singularities of boundary value problems. I},
Comm. Pure Appl. Math. {\bf 31} (1978), no. 5, 593--617.

\bibitem{MelroseSjostrand2}
R.B. Melrose and J. Sj\"ostrand,
\textit{Singularities of boundary value problems. II},
Comm. Pure Appl. Math. {\bf 35} (1982), no. 2, 129--168.

\bibitem{Phung}
K. D. Phung,
\textit{Polynomial decay rate for the dissipative wave equation}
J. Differential Equations {\bf 240} (2007), no. 1, 92--124. 

\bibitem{RT:74} J.~Rauch and M.~Taylor, 
\textit{Exponential decay of solutions to hyperbolic equations in bounded domains}.
Indiana Univ. Math. J. {\bf 24} (1974), 79–-86. 

\bibitem{Russell:71} D.~L.~Russell, 
\textit{Boundary value control of the higher-dimensional wave
              equation}, SIAM J. Control {\bf 9} (1971), 29--42.
 

\bibitem{Russell:71b} D.~L.~Russell, 
\textit{Boundary value control theory of the higher-dimensional wave
equation. II.}, SIAM J. Control {\bf 9} (1971), 401–419. 

\bibitem{Schenck}
E. Schenck, 
\textit{Exponential stabilization without geometric control}
Math. Res. Lett. {\bf 18} (2011), no. 2, 379--388. 

\bibitem{Tartar}
L. Tartar,
\textit{H-measures: a new approach for studying homogenization, oscillations and concentration effects in partial differential equations},
Proc. Roy. Soc. Edinburgh Sect. A {\bf 115} (1990), 193--230.

\end{thebibliography}
\end{document}